\RequirePackage{fix-cm}

\documentclass[smallextended]{svjour3}
\smartqed
\usepackage{graphicx}

\usepackage{amsfonts}
\usepackage{amsmath}
\usepackage{csquotes}
\usepackage{algorithmicx}
\usepackage{algorithm}
\usepackage{algpseudocode}
\usepackage{color}

\usepackage{empheq}
\usepackage{physics}
\usepackage{comment}

\begin{document}

\title{Exactly Diagonal Gram Matrices in Jacobi Weighted Histopolation}

\titlerunning{Jacobi Weighted Histopolation meets Structured Linear Algebra}

\author{Allal Guessab\textsuperscript{*}
\thanks{* Retired from Universit\'e de Pau et des Pays
de l'Adour on 1 September 2025; currently an independent researcher.}
\and
Federico Nudo
\and
Stefano Serra-Capizzano
}


\institute{Allal Guessab \at
       Laboratoire de Mathématiques et de leurs Applications, UMR CNRS 5142, Université de Pau et des Pays de l'Adour (UPPA), 64000 Pau, France\\
             \email{allal.guessab@univ-pau.fr}
         \and
            Federico Nudo (corresponding author) \at
              Department of Mathematics and Computer Science, University of Calabria, Rende (CS), Italy\\
              \email{federico.nudo@unical.it}
 \and
            Stefano
Serra-Capizzano  \at
              University of Insubria, Department of Science and High Technology, via Valleggio 11, 22100  Como (Italy)\\
              University of Uppsala, Department of Information Technology\\ hus 10, L\"{a}gerhyddsv\"{a}gen 1, 75105 Uppsala (Sweden)\\
\email{s.serracapizzano@uninsubria.it}
}

\date{Received: date / Accepted: date}

\maketitle

\begin{abstract}
In the current work, we study univariate polynomial weighted histopolation on
$[-1,1]$, where the data are weighted integrals over a family of intervals.
After choosing a polynomial basis, the weighted moment conditions lead to a
histopolation matrix whose structure depends on the weight and on the geometry
of the cells. We investigate its nonsingularity, which guarantees unisolvence,
together with exact diagonality of its Gram matrix, which allows its singular
values and spectral condition number to be determined explicitly. For families of intervals whose endpoints belong to a fixed grid, we characterize unisolvence in terms of the connectedness of the associated
endpoint graph. In the unisolvent case, this graph is a tree, and the unique paths joining consecutive grid points provide an explicit expression for the inverse matrix, while their lengths determine its infinity norm. For the nested and supplemented sliding-window families, we show that the corresponding interval-cell incidence matrices are related, up to diagonal sign matrices, by inversion. This identity gives explicit formulas for the singular values of both matrices, and shows that their condition numbers in the two-norm coincide and grow linearly with the matrix size. Moreover, it yields the limiting singular value distributions of the two matrix sequences.
We also establish a general diagonalization criterion based on discrete
weighted orthogonality. The criterion recovers the first kind Chebyshev
construction and leads to a diagonal configuration for the constant weight
based on discrete sine orthogonality. For interval families with a connected
endpoint graph, the corresponding moment vectors define an inner product on
the polynomial space and lead to a monic basis with a diagonal weighted Gram
matrix. Finally, we derive reduction formulas for cell moments associated
with generalized Jacobi weights and introduce an alternative basis for
shifted Jacobi weights. Applied to the Chebyshev weight of the fourth kind,
this basis, together with a correction of one nonconstant element, yields an
exactly diagonal Gram matrix.

\keywords{Orthogonal polynomials \and Gram matrix \and Histopolation \and Weighted approximation \and Jacobi polynomials}

\subclass{65D05}
\end{abstract}

\section{Introduction}
\label{sec:introduction}

In many reconstruction problems, including tomography, imaging, and numerical
simulation, the available information is not given by pointwise values of the
unknown function but by averages or integrals over intervals, cells, faces,
or more general geometric regions; see, e.g.,
\cite{Kak:2001:POC,Natterer:2001:TMO,Palamodov:2016:RFI}. In such situations, the reconstruction procedure should reflect the integral nature of the data. This is the basic idea behind \textit{histopolation},
where the degrees of freedom are average values or moment-type functionals~\cite{Bruni:2024:PIO}. Several histopolation methods and their
properties have been studied in recent years; see~\cite{Kirsiaed:2020:RSH,Bruni:2025:OTC,Bruni:2025:TFP,Bruni:2026:AAF}.
When the degrees of freedom are defined by weighted integral functionals,
the resulting problem is referred to as \textit{weighted histopolation}~\cite{Demichelis:1995:GAO,Guessab:2026:SDO}. Such constructions arise when
the data are averaged with respect to a nonuniform density represented by a
weight function~\cite{Bose:1965:FSA}. The presence of the weight modifies the
moment conditions and may affect fundamental properties of the reconstruction
problem, including unisolvence and stability~\cite{Guessab:2026:QWH}. After a
basis for the reconstruction space has been chosen, these conditions give
rise to a structured moment matrix whose properties depend on the weight, the
polynomial basis, and the geometry of the cells. A first question is whether the prescribed weighted integral data uniquely
determine a polynomial in the reconstruction space. This is the unisolvence
problem, which is equivalent to the nonsingularity of the corresponding
moment matrix. A characterization of unisolvence for univariate polynomial
weighted histopolation on families of intervals is given
in~\cite{Guessab:2026:WDH}. Nonsingularity, however, does not by itself
guarantee numerical stability. The smallest singular value may still be close
to zero, and the spectral condition number may become large~\cite{Bruni:2025:OTC}. This motivates the study of configurations for which
the structure of the moment matrix can be described explicitly.

In this paper, we consider univariate polynomial weighted histopolation on
$[-1,1]$ associated with the Jacobi weights
\[
\omega_{\alpha,\beta}(x)
=
(1-x)^\alpha(1+x)^\beta,
\quad
\alpha,\beta>-1.
\]
We first study families of intervals whose endpoints belong to a fixed grid.
With each family, we associate an undirected graph whose vertices are the grid points and
whose edges join the endpoints of the intervals. We prove that unisolvence is equivalent to the connectedness of the endpoint graph. In the unisolvent case, the considered graph is a tree, and the unique paths joining consecutive grid points lead to an explicit representation of the inverse of the interval matrix, while their lengths provide a direct formula for its infinity norm. For the nested and supplemented sliding-window families, we further show that one interval matrix is similar, through a diagonal sign matrix, to the inverse of the other one. Consequently, their condition
numbers in the two-norm coincide, are determined explicitly, and grow
linearly with the matrix size. We also determine the limiting singular value distributions of the corresponding matrix sequences. We then turn to cells of constant angular length. For the Chebyshev weight of the first kind, discrete cosine orthogonality implies that the Gram matrix of the normalized histopolation matrix is exactly diagonal. This gives explicit formulas for its diagonal entries, singular values, and spectral condition number. We also prove a converse result. Whenever the number of
cells is at least three, requiring this diagonal structure for every
admissible angular half-length characterizes the Jacobi parameters $\alpha=\beta=-\frac{1}{2}$. The Chebyshev construction is then placed within a more general framework
based on discrete weighted orthogonality. We establish a criterion ensuring
the exact diagonality of a weighted Gram matrix and recover the first kind
Chebyshev case as a particular instance. The same criterion yields a configuration for the constant weight based on discrete sine orthogonality. For interval families with a connected endpoint graph, the corresponding
moment vectors define an inner product on the polynomial space. The unique
monic basis orthogonal with respect to this inner product gives a diagonal
weighted Gram matrix. We also show that, when only one nonconstant column is
not orthogonal to the constant column, exact diagonality can be recovered by
subtracting a suitable multiple of the constant polynomial from the corresponding basis element. Finally, we consider generalized and shifted Jacobi weights. The associated cell moments are reduced to finite linear combinations of classical Jacobi
cell moments, whose positive-degree terms admit explicit representations in
terms of endpoint values. Subsequently, we introduce an alternative polynomial basis
for shifted Jacobi weights. For the Chebyshev weight of the fourth kind, this
basis, together with the correction described above, yields an exactly diagonal Gram matrix.

In addition to the technical results, the main value of the present work is a fruitful connection among approximation theory, structured matrices, and asymptotic/numerical linear algebra; see also~\cite{Guessab:2026:SDO} for other findings that use a similar interplay among the same fields.

The paper is organized as follows. In
Section~\ref{sec:combinatorialUnisolvence}, we study interval families generated from a fixed grid, characterize unisolvence through the connectedness of the associated endpoint graph, and derive an explicit formula for the
inverse of the interval matrix. In Section~\ref{sec3}, we consider cells of
constant angular length and establish the exact diagonality results for the
Chebyshev weight of the first kind, together with the formulas for the
singular values and spectral condition number. In Section~\ref{sec:diagonalizationCriterion}, we develop the general diagonalization criterion based on discrete weighted orthogonality, construct the associated monic orthogonal basis, and present further configurations based on discrete sine orthogonality. In Section~\ref{subsec:generalShiftedJacobiDifferenceQuotient}, we derive the
moment reduction formulas for generalized and shifted Jacobi weights and
apply the resulting constructions to the Chebyshev weight of the fourth kind.

\section{A combinatorial unisolvence criterion}
\label{sec:combinatorialUnisolvence}

In this section, we consider weighted histopolation on interval families
obtained from a fixed grid. For these families, the unisolvence problem has
an algebraic characterization in terms of an interval matrix, as described
in~\cite{Guessab:2026:WDH}. We use this characterization as a starting point
and then give an equivalent geometric form in terms of a graph on the grid
points. The considered geometric point of view also gives an explicit formula for the
inverse of the interval matrix.

Let
\begin{equation}
\label{gridXN}
X_N=\left\{x_0,\ldots,x_N\right\},
\quad
-1=x_0<x_1<\cdots<x_N=1,
\end{equation}
and set
\[
e_j=\left[x_{j-1},x_j\right],
\quad j=1,\ldots,N.
\]
Assume that $\omega\in L^1(-1,1)$ satisfies
\[
\omega(x)>0
\quad\text{for almost every }x\in(-1,1).
\]
We define the weighted elementary moment matrix
$M_{\mathcal E}\in\mathbb R^{N\times N}$ by
\[
\left[M_{\mathcal E}\right]_{j,k+1}
=
\int_{e_j}P_k(x)\omega(x)dx,
\quad
j=1,\ldots,N,
\quad
k=0,\ldots,N-1,
\]
where $P_0,\ldots,P_{N-1}$ form a basis of $\Pi_{N-1}$ and satisfy
\[
\operatorname{deg}\left(P_k\right)=k,
\quad
k=0,\ldots,N-1.
\]
Let
\[
\mathcal S=\left\{s_1,\ldots,s_N\right\}
\]
be a family of $N$ pairwise distinct intervals whose endpoints belong to the
grid~\eqref{gridXN}.  Thus, for any $i=1,\ldots,N$, there exist integers
$\ell_i,r_i$ such that
\[
0\leq \ell_i<r_i\leq N,
\quad
s_i=\left[x_{\ell_i},x_{r_i}\right].
\]
We associate with $\mathcal S$ the interval matrix
$A_{\mathcal S}\in\mathbb R^{N\times N}$ defined by
\[
\left[A_{\mathcal S}\right]_{ij}
=
\begin{cases}
1, & \ell_i<j\leq r_i,\\
0, & \text{otherwise}.
\end{cases}
\]
Equivalently,
\[
\left[A_{\mathcal S}\right]_{ij}=1
\]
if and only if the elementary
cell $e_j$ is contained in $s_i$. Let $M_{\mathcal S}\in\mathbb R^{N\times N}$ be the corresponding moment
matrix, namely
\[
\left[M_{\mathcal S}\right]_{i,k+1}
=
\int_{s_i}P_k(x)\omega(x)dx,
\quad
i=1,\ldots,N,
\quad
k=0,\ldots,N-1.
\]

\begin{proposition}
\label{prop:subgridIntervalFactorization}
The matrix $M_{\mathcal S}$ satisfies
\begin{equation}
\label{eq:subgridIntervalFactorization}
M_{\mathcal S}=A_{\mathcal S}M_{\mathcal E}.
\end{equation}
Moreover,
\begin{equation}
\label{eq:subgridOrdinaryGram}
M_{\mathcal S}^{\top}M_{\mathcal S}
=
M_{\mathcal E}^{\top}
A_{\mathcal S}^{\top}A_{\mathcal S}
M_{\mathcal E}.
\end{equation}
\end{proposition}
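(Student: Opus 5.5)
The factorization follows entrywise from additivity of the integral over the elementary cells. Fix $i\in\{1,\ldots,N\}$ and $k\in\{0,\ldots,N-1\}$. Since the endpoints of $s_i$ are grid points, we have
\[
s_i=\left[x_{\ell_i},x_{r_i}\right]=\bigcup_{j=\ell_i+1}^{r_i} e_j .
\]
Distinct elementary cells $e_j$ intersect in at most one point, which has Lebesgue measure zero. The integrand $P_k\omega$ belongs to $L^1(-1,1)$, because $P_k$ is bounded on $[-1,1]$ and $\omega\in L^1(-1,1)$. Hence additivity of the Lebesgue integral gives
\[
\left[M_{\mathcal S}\right]_{i,k+1}=\int_{s_i}P_k(x)\omega(x)\,dx=\sum_{j=\ell_i+1}^{r_i}\int_{e_j}P_k(x)\omega(x)\,dx .
\]

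Next we rewrite this sum using the definition of $A_{\mathcal S}$. Row $i$ of $A_{\mathcal S}$ is the indicator of the index set $\{j:\ell_i<j\le r_i\}$. Therefore the last sum equals
\[
\sum_{j=1}^{N}\left[A_{\mathcal S}\right]_{ij}\left[M_{\mathcal E}\right]_{j,k+1}=\left[A_{\mathcal S}M_{\mathcal E}\right]_{i,k+1}.
\]
Since this holds for every $i$ and every $k$, identity \eqref{eq:subgridIntervalFactorization} follows.

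The Gram identity \eqref{eq:subgridOrdinaryGram} is then immediate. Transposing \eqref{eq:subgridIntervalFactorization} gives $M_{\mathcal S}^{\top}=M_{\mathcal E}^{\top}A_{\mathcal S}^{\top}$, and multiplying on the right by $M_{\mathcal S}=A_{\mathcal S}M_{\mathcal E}$ yields
\[
M_{\mathcal S}^{\top}M_{\mathcal S}=M_{\mathcal E}^{\top}A_{\mathcal S}^{\top}A_{\mathcal S}M_{\mathcal E},
\]
using associativity of matrix multiplication.

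There is no substantial obstacle. The only point needing care is integrability: one must check that $P_k\omega\in L^1(-1,1)$ and that the shared endpoints of adjacent cells contribute nothing to the integrals. Both facts were used above. The positivity assumption on $\omega$ is not needed for this proposition; it becomes relevant only for the later nonsingularity statements.
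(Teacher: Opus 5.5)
Your proposal is correct and follows the paper's own argument: it decomposes $s_i=\bigcup_{j=\ell_i+1}^{r_i}e_j$, applies additivity of the integral, rewrites the sum using the rows of $A_{\mathcal S}$, and obtains the Gram identity by transposing the factorization. Your integrability check ($P_k\omega\in L^1(-1,1)$) and your remark that the shared endpoints have measure zero make explicit details that the paper leaves implicit.
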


\begin{proof}
For each $i$, we have
\[
s_i=\left[x_{\ell_i},x_{r_i}\right]=\bigcup_{j={\ell_i}+1}^{r_i}e_j.
\]
Hence, by additivity of the integral, we have
\[
\left[M_{\mathcal S}\right]_{i,k+1}
=
\sum_{j=\ell_i+1}^{r_i}
\int_{e_j}P_k(x)\omega(x)dx
=
\sum_{j=1}^{N}
\left[A_{\mathcal S}\right]_{ij}
\left[M_{\mathcal E}\right]_{j,k+1},
\]
which proves~\eqref{eq:subgridIntervalFactorization}. Therefore,
\[
M_{\mathcal S}^{\top}M_{\mathcal S}
=
\left(A_{\mathcal S}M_{\mathcal E}\right)^{\top}
\left(A_{\mathcal S}M_{\mathcal E}\right)
=
M_{\mathcal E}^{\top}
A_{\mathcal S}^{\top}A_{\mathcal S}
M_{\mathcal E},
\]
which gives~\eqref{eq:subgridOrdinaryGram}.
\end{proof}

\begin{remark}
If $A_{\mathcal S}$ is nonsingular, then
\[
W_{\mathcal S}=A_{\mathcal S}^{\top}A_{\mathcal S}
\]
is symmetric positive definite. Therefore, diagonalization of
$M_{\mathcal S}^{\top}M_{\mathcal S}$ is equivalent to diagonalization of
the weighted Gram matrix
\[
M_{\mathcal E}^{\top}W_{\mathcal S}M_{\mathcal E}.
\]
\end{remark}
Under the assumptions above, the elementary family
\[
\mathcal E=\left\{e_1,\ldots,e_N\right\}
\]
is unisolvent on $\Pi_{N-1}$. Hence $M_{\mathcal E}$ is nonsingular~\cite{Bruni:2024:PIO,Bruni:2025:OTC}.
Moreover, the algebraic characterization described in~\cite{Guessab:2026:WDH}
states that
\[
M_{\mathcal S}
\quad\text{is nonsingular}
\quad\Longleftrightarrow\quad
A_{\mathcal S}
\quad\text{is nonsingular}.
\]
Equivalently, the weighted histopolation problem on $\mathcal S$ is
unisolvent if and only if the interval matrix $A_{\mathcal S}$ is
nonsingular.

We now give a geometric interpretation of the same condition.

\subsection{A geometric characterization result}
\label{subsec:endpointTrees}

Let
\[
\mathcal S=\left\{s_1,\ldots,s_N\right\},
\quad
s_i=\left[x_{\ell_i},x_{r_i}\right],
\quad i=1,\ldots,N.
\]
We now associate a graph with the endpoints of the intervals. The vertices of the graph are the indices of the grid points, i.e.
\[
\mathcal V_N=\{0,1,\ldots,N\}.
\]
Each interval $s_i$ gives one edge joining its two endpoint indices
$\ell_i$ and $r_i$. Thus we define the undirected graph
\[
\mathcal G_{\mathcal S}
=
(\mathcal V_N,\mathcal E_{\mathcal S}),
\]
where
\[
\mathcal E_{\mathcal S}
=
\left\{
\left\{\ell_i,r_i\right\}\,:\, i=1,\ldots,N
\right\}.
\]
In this way, the interval $s_i=\left[x_{\ell_i},x_{r_i}\right]$ is represented by
the edge $\left\{\ell_i,r_i\right\}$.

We recall the elementary  terminology used below. A path in
$\mathcal G_{\mathcal S}$ is a finite sequence of pairwise distinct vertices
\[
v_0,v_1,\ldots,v_m
\]
such that
\[
\left\{v_{k-1},v_k\right\}\in\mathcal E_{\mathcal S},
\quad k=1,\ldots,m.
\]
The graph $\mathcal G_{\mathcal S}$ is connected if and only if every two vertices in
$\mathcal V_N$ can be joined by a path.

\begin{theorem}
\label{thm:endpointTreeCriterion}
The following statements are equivalent:
\begin{enumerate}
\item $A_{\mathcal S}$ is nonsingular;
\item $\mathcal G_{\mathcal S}$ is connected.
\end{enumerate}
Consequently, the unisolvence condition
\[
\det\left(A_{\mathcal S}\right)\neq0
\]
is equivalent to the fact that the intervals of $\mathcal S$ connect all
grid points.
\end{theorem}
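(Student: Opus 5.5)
The key idea is to recognise $A_{\mathcal S}$ as a transformed incidence matrix of the graph $\mathcal G_{\mathcal S}$. Let $D\in\mathbb R^{(N+1)\times N}$ be the ``difference'' map sending a row vector $a\in\mathbb R^{N}$ indexed by cells to the vector of jumps indexed by vertices. Concretely, for a row $a=(a_1,\ldots,a_N)$ set $a_0=a_{N+1}=0$ and define $(Da)_v=a_{v+1}-a_v$ for $v=0,\ldots,N$. For the $i$-th row of $A_{\mathcal S}$, which is the indicator of the cells $\ell_i+1,\ldots,r_i$, this jump vector equals $\mathbf e_{\ell_i}-\mathbf e_{r_i}$. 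This is exactly the signed incidence vector of the edge $\{\ell_i,r_i\}$.

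Hence the rows of $A_{\mathcal S}$ are mapped by the linear map $a\mapsto Da$ onto the rows of the (oriented) incidence matrix $B\in\mathbb R^{N\times(N+1)}$ of $\mathcal G_{\mathcal S}$. The map $a\mapsto Da$ is injective, since $a$ is recovered by partial sums: $a_j=\sum_{v<j}(Da)_v$. Its image is the hyperplane $H=\{y\in\mathbb R^{N+1}:\sum_v y_v=0\}$, which has dimension $N$. Therefore
\[
\operatorname{rank}A_{\mathcal S}=\operatorname{rank}B .
\]

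The second step is the classical fact that the incidence matrix of a graph with $N+1$ vertices has rank $(N+1)-c$, where $c$ is the number of connected components. I will prove it directly. The left kernel side is not what we need; instead consider the column relation $Bz=0$ for $z\in\mathbb R^{N+1}$. This says $z_{\ell_i}=z_{r_i}$ for every edge, i.e.\ $z$ is constant on each component. So $\dim\ker B=c$ and $\operatorname{rank}B=N+1-c$.

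Consequently $\operatorname{rank}A_{\mathcal S}=N$, i.e.\ $A_{\mathcal S}$ is nonsingular, if and only if $c=1$, which is the connectedness of $\mathcal G_{\mathcal S}$. The final sentence of the theorem then follows from the characterization of \cite{Guessab:2026:WDH} quoted above, combined with Proposition~\ref{prop:subgridIntervalFactorization} and the nonsingularity of $M_{\mathcal E}$.

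The main point requiring care is the first step: checking that the jump vector of each row is exactly $\mathbf e_{\ell_i}-\mathbf e_{r_i}$, including the boundary cases $\ell_i=0$ and $r_i=N$, which is why the padding convention $a_0=a_{N+1}=0$ is used. It also requires checking that rank is preserved, which holds because $D$ is an injective linear map applied to the row space. Equivalently, one can write $A_{\mathcal S}=BL$, where $L\in\mathbb R^{(N+1)\times N}$ has entries $L_{vj}=1$ for $v<j$ and $0$ otherwise. This $L$ has full column rank, and its range is a complement of $\ker B\supseteq\operatorname{span}(\mathbf 1)$ in the connected case; this gives an alternative formulation of the same argument that I may use instead if it reads more cleanly. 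As a by-product, $N$ edges on $N+1$ vertices with the graph connected forces $\mathcal G_{\mathcal S}$ to be a tree, which prepares the later inverse formula.
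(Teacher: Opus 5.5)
Your proof is correct and is essentially the paper's argument in matrix form: the factorization $A_{\mathcal S}=BL$ through the edge--vertex incidence matrix is the paper's cumulative-sum substitution $[A_{\mathcal S}\boldsymbol c]_i=u_{r_i}-u_{\ell_i}$ (your $L$ takes tail sums rather than prefix sums, a difference of a constant vector that $B$ annihilates), and your description of $\ker B$ as the vectors constant on connected components is the mechanism the paper uses in both directions. The paper propagates $u_{\ell_i}=u_{r_i}$ along paths from $u_0=0$ in one direction, and in the other takes the indicator of a component not containing $0$. Going through rank--nullity gives you the slightly stronger statement $\operatorname{rank}A_{\mathcal S}=N+1-c$, i.e.\ $\dim\ker A_{\mathcal S}=c-1$ with $c$ the number of components, instead of only the nonsingular/singular dichotomy.
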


\begin{proof}
Assume first that $\mathcal G_{\mathcal S}$ is connected. We prove that
$A_{\mathcal S}$ is nonsingular. Let
\[
\boldsymbol c=\left(c_1,\ldots,c_N\right)^{\top}\in\ker\left(A_{\mathcal S}\right).
\]
We define the cumulative sums
\begin{equation*}
u_0=0,
\quad
u_j=\sum_{k=1}^{j}c_k,
\quad j=1,\ldots,N.
\end{equation*}
Then
\begin{equation}\label{csa}
    c_j=u_j-u_{j-1},
\quad j=1,\ldots,N.
\end{equation}
For an interval
\[
s_i=\left[x_{\ell_i},x_{r_i}\right]\in\mathcal{S},
\]
the definition of $A_{\mathcal S}$ gives
\begin{equation*}
0=\left[A_{\mathcal S}\boldsymbol c\right]_i
=
\sum_{j=\ell_i+1}^{r_i}c_j
=
u_{r_i}-u_{\ell_i}.
\end{equation*}
Hence
\[
u_{r_i}=u_{\ell_i},
\quad i=1,\ldots,N.
\]
Let $j\in\{1,\ldots,N\}$. Since $\mathcal G_{\mathcal S}$ is connected,
there exists a path
\[
v_0,v_1,\ldots,v_m, \quad \text{such that} \quad
\left\{v_{k-1},v_k\right\}\in\mathcal E_{\mathcal S},
\quad k=1,\ldots,m,
\]
with
\[
v_0=0,
\quad
v_m=j.
\]
For each edge $\left\{v_{k-1},v_k\right\}$ of the path, there exists an interval
$s_i=\left[x_{\ell_i},x_{r_i}\right]\in\mathcal S$ such that
\[
\ell_i=\min\left\{v_{k-1},v_k\right\}, \quad r_i=\max\left\{v_{k-1},v_k\right\}.
\]
Therefore
\[
u_{v_0}=u_{v_1}=\cdots=u_{v_m}.
\]
Since $u_{v_0}=u_0=0$, we obtain
\[
u_j=u_{v_m}=0.
\]
Since this holds for every $j=1,\ldots,N$, we have
\[
u_0=u_1=\cdots=u_N=0.
\]
Using~\eqref{csa}, it follows that
\[
c_j=u_j-u_{j-1}=0,
\quad j=1,\ldots,N.
\]
Hence $\boldsymbol c=\boldsymbol 0$. Therefore
\[
\ker\left(A_{\mathcal S}\right)=\{\boldsymbol 0\}.
\]
Conversely, assume that $\mathcal G_{\mathcal S}$ is not connected. Then
there exists a vertex
\[
q\in\{1,\ldots,N\}
\]
which cannot be joined to $0$ by any path. Let $\mathcal C$ be the set of
vertices which can be joined to $q$ by a path. Then
\[
q\in\mathcal C,
\quad
0\notin\mathcal C.
\]
Define
\[
\boldsymbol{u}=\left(u_0,\dots,u_N\right)^{\top}\in\mathbb{R}^{N+1}
\]
such that
\[
u_j=
\begin{cases}
1, & j\in\mathcal C,\\
0, & j\notin\mathcal C.
\end{cases}
\]
Then
\[
u_0=0,
\quad
u_q=1.
\]
Hence, for any edge
$\left\{\ell_i,r_i\right\}$, we have
\begin{equation}\label{csanew6}
    u_{r_i}=u_{\ell_i}.
\end{equation}
Now set
\[
\boldsymbol c=\left(c_1,\ldots,c_N\right)^{\top}\in\mathbb{R}^N
\]
such that
\[
c_j=u_j-u_{j-1},
\quad j=1,\ldots,N.
\]
Since $u_0=0$ and $u_q=1$, the vector
\[
\boldsymbol c=\left(c_1,\ldots,c_N\right)^{\top}
\]
is nonzero. On the other hand, by~\eqref{csanew6}, we have
\[
\left[A_{\mathcal S}\boldsymbol c\right]_i
=
u_{r_i}-u_{\ell_i}
=
0,
\quad i=1,\ldots,N.
\]
Thus $A_{\mathcal S}$ has a nontrivial kernel and is singular.
\end{proof}

\begin{example}
\label{ex:endpointGraphConfigurations}
The connectedness criterion in
Theorem~\ref{thm:endpointTreeCriterion} can often be verified directly,
without computing the determinant of the interval matrix. We illustrate this
with three families defined for arbitrary values of $N\ge 3$.

\begin{itemize}

\item \textit{A zig-zag family.}\\
We order the vertices of $\mathcal V_N$ as follows:
\[
v_i=
\begin{cases}
\dfrac{i}{2}, & \text{if $i$ is even},\\[2mm]
N-\dfrac{i-1}{2}, & \text{if $i$ is odd},
\end{cases}
\quad
i=0,\ldots,N.
\]
The vertices with even indices in this sequence are
\[
0,1,\ldots,\left\lfloor\frac{N}{2}\right\rfloor,
\]
whereas those with odd indices are
\[
N,N-1,\ldots,\left\lfloor\frac{N}{2}\right\rfloor+1.
\]
For
$i=1,\ldots,N$, set
\[
\ell_i=\min\left\{v_{i-1},v_i\right\},
\quad
r_i=\max\left\{v_{i-1},v_i\right\},
\]
and define
\[
s_i=\left[x_{\ell_i},x_{r_i}\right].
\]
Now we consider the interval family
\[
\mathcal S_{\mathrm{zig}}
=
\left\{s_1,\ldots,s_N\right\}.
\]
By construction, the edge associated with $s_i$ is
\[
\left\{v_{i-1},v_i\right\},
\quad
i=1,\ldots,N.
\]
Therefore, the sequence
\[
v_0,v_1,\ldots,v_N
\]
defines a simple path in
$\mathcal G_{\mathcal S_{\mathrm{zig}}}$ containing all the vertices of
$\mathcal V_N$, as illustrated in Fig.~\ref{fig:zigzag}. Hence,
$\mathcal G_{\mathcal S_{\mathrm{zig}}}$ is connected and, by
Theorem~\ref{thm:endpointTreeCriterion},
$\mathcal S_{\mathrm{zig}}$ is unisolvent.

\item \textit{An interior star family.}\\
Fix a vertex
\[
q\in\{1,\ldots,N-1\}
\]
and consider
\[
\mathcal S_{\mathrm{star}}^{(q)}
=
\left\{
\left[x_i,x_q\right]
\,:\,
i=0,\ldots,q-1
\right\}
\cup
\left\{
\left[x_q,x_i\right]
\,:\,
i=q+1,\ldots,N
\right\}.
\]
The associated endpoint graph has edge set
\[
\mathcal E_{\mathcal S_{\mathrm{star}}^{(q)}}
=
\left\{
\left\{q,i\right\}
\,:\,
i\in\mathcal V_N\setminus\{q\}
\right\}.
\]
It is therefore a star centered at the interior vertex $q$, as illustrated in
Fig.~\ref{fig:intstar}. Since any vertex is connected to $q$, the graph is
connected. It follows from Theorem~\ref{thm:endpointTreeCriterion} that
$\mathcal S_{\mathrm{star}}^{(q)}$ is unisolvent.

\item \textit{A disconnected family.}\\
Define
\[
\mathcal S_{\mathrm{disc}}
=
\left\{
\left[x_0,x_i\right]
\,:\,
i=1,\ldots,N-1
\right\}
\cup
\left\{
\left[x_1,x_{N-1}\right]
\right\}.
\]
The corresponding endpoint graph has edges
\[
\{0,1\},\{0,2\},\ldots,\{0,N-1\},
\{1,N-1\}.
\]
All these edges have their endpoints in
\[
\{0,\ldots,N-1\},
\]
whereas the vertex $N$ is isolated, as illustrated in
Fig.~\ref{fig:discon}. Therefore,
$\mathcal G_{\mathcal S_{\mathrm{disc}}}$ is not connected and
Theorem~\ref{thm:endpointTreeCriterion} shows that
$\mathcal S_{\mathrm{disc}}$ is not unisolvent.
\end{itemize}
\end{example}

\begin{figure}[ht]
  \centering
  \includegraphics[width=0.49\textwidth]{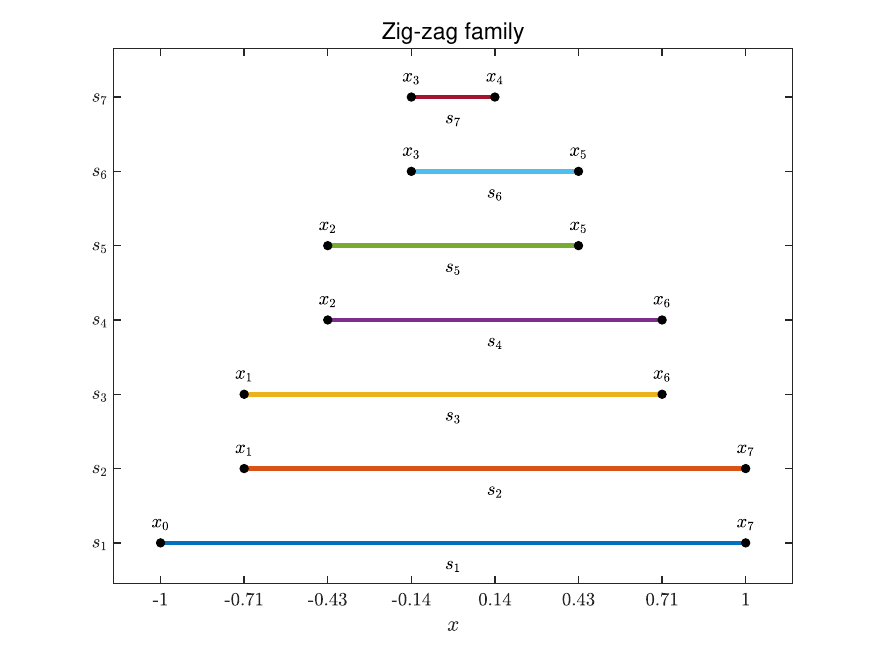}
  \hfill
  \includegraphics[width=0.49\textwidth]{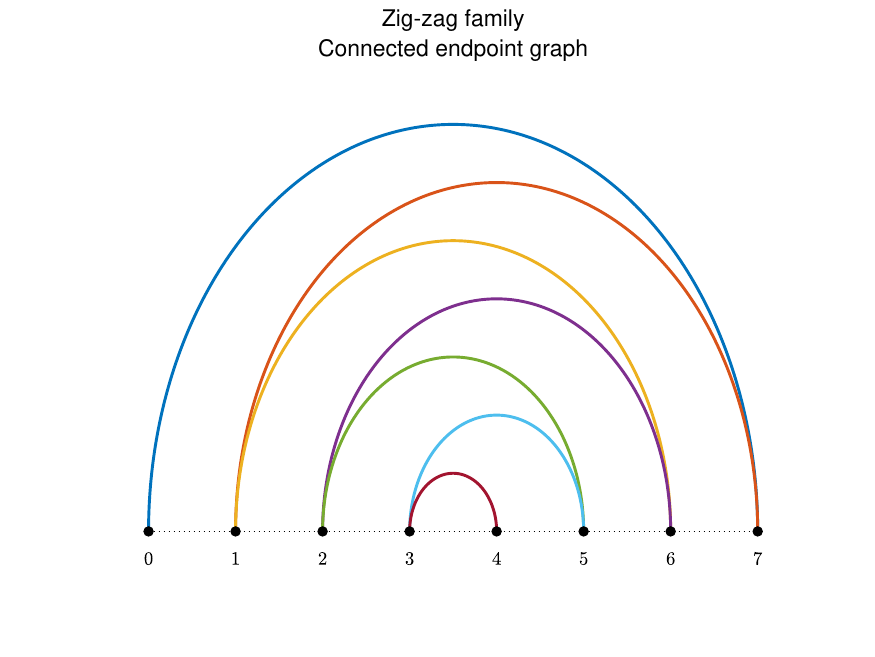}
  \caption{The zig-zag configuration for $N=7$. Left: the interval family
  $\mathcal S_{\mathrm{zig}}$. Right: the associated endpoint graph
  $\mathcal G_{\mathcal S_{\mathrm{zig}}}$.}
  \label{fig:zigzag}
\end{figure}

\begin{figure}[ht]
  \centering
  \includegraphics[width=0.49\textwidth]{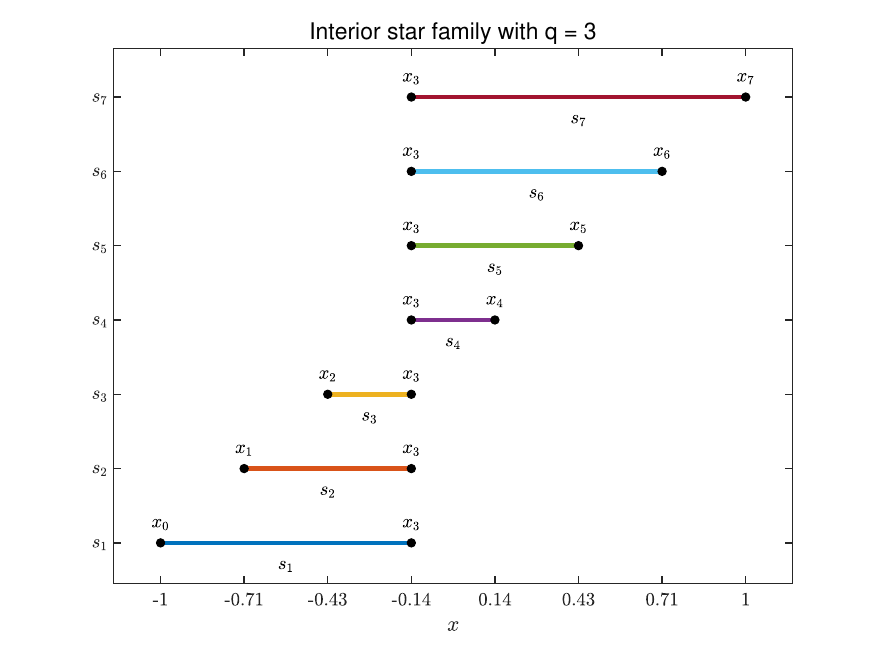}
  \hfill
  \includegraphics[width=0.49\textwidth]{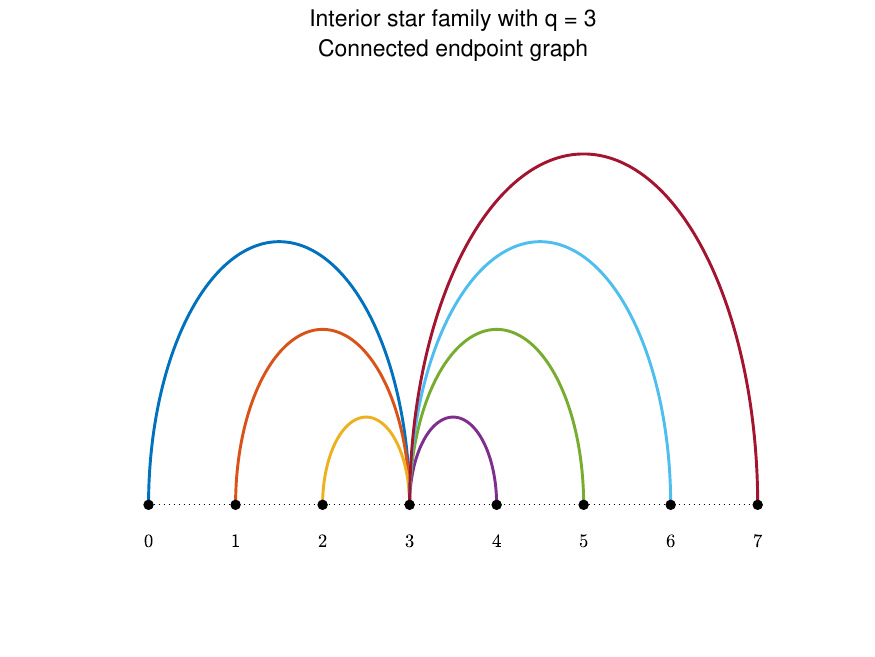}
  \caption{The interior star configuration for $N=7$. Left: the interval
  family $\mathcal S_{\mathrm{star}}^{(q)}$. Right: the associated endpoint
  graph $\mathcal G_{\mathcal S_{\mathrm{star}}^{(q)}}$.}
  \label{fig:intstar}
\end{figure}

\begin{figure}[ht]
  \centering
  \includegraphics[width=0.49\textwidth]{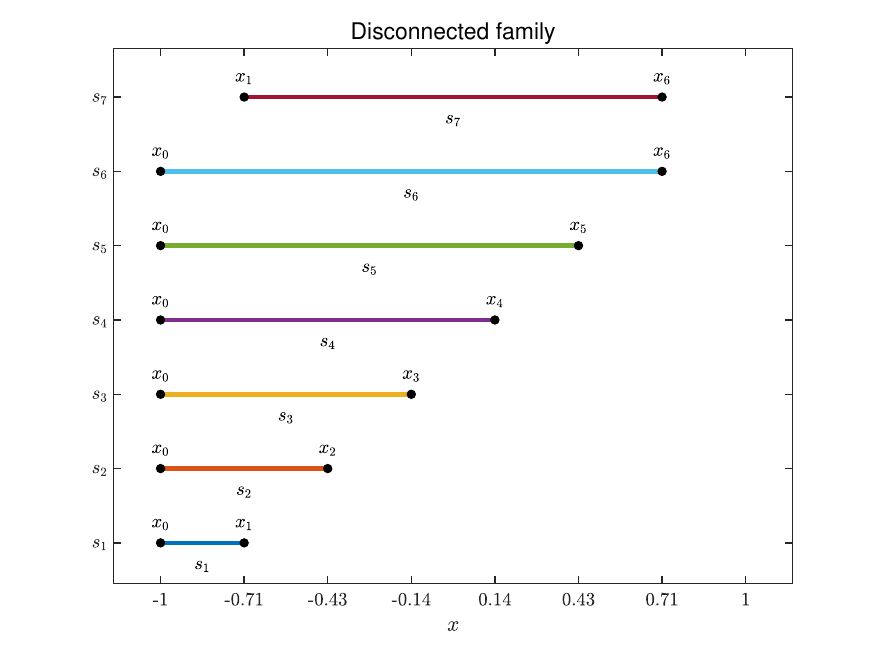}
  \hfill
  \includegraphics[width=0.49\textwidth]{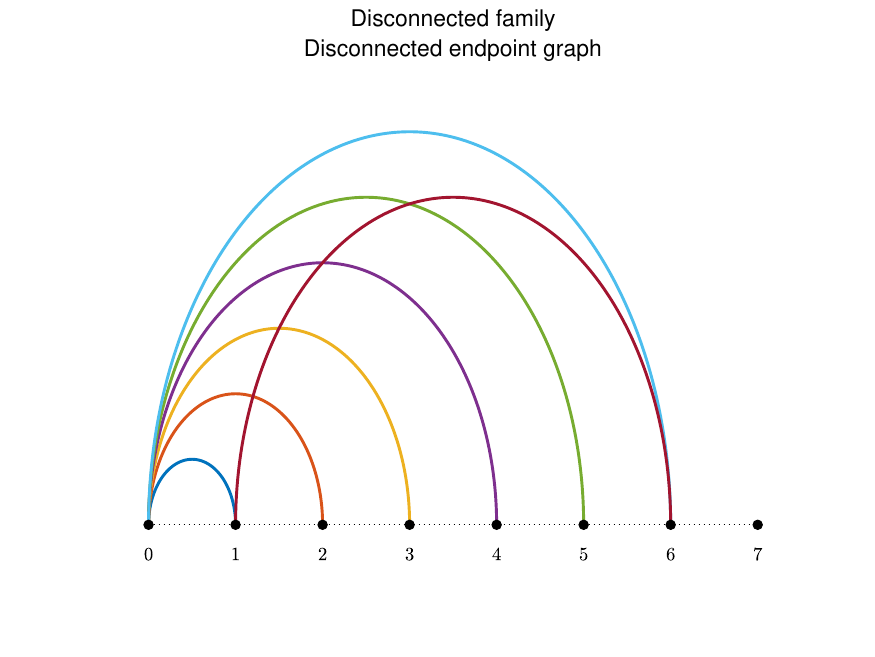}
  \caption{The disconnected configuration for $N=7$. Left: the interval
  family $\mathcal S_{\mathrm{disc}}$. Right: the associated endpoint graph
  $\mathcal G_{\mathcal S_{\mathrm{disc}}}$.}
  \label{fig:discon}
\end{figure}

\begin{remark}
Assume that $A_{\mathcal S}$ is nonsingular. Then, by
Theorem~\ref{thm:endpointTreeCriterion}, the graph
$\mathcal G_{\mathcal S}$ is connected. Moreover, it has $N+1$ vertices and
$N$ edges. We first show that $\mathcal G_{\mathcal S}$ has no cycles. Suppose, by
contradiction, that it contains a cycle
\[
v_0,v_1,\ldots,v_m, \quad m\geq 3,
\quad
v_0=v_m,
\]
where the vertices $v_0,\ldots,v_{m-1}$ are distinct. Remove from
$\mathcal G_{\mathcal S}$ the edge
\[
\left\{v_{m-1},v_m\right\}.
\]
The graph remains connected.
Therefore, after removing one edge of the cycle, we would obtain a
connected graph with $N+1$ vertices and $N-1$ edges. The latter is impossible,
because any connected graph with $N+1$ vertices has at least $N$ edges.
Hence $\mathcal G_{\mathcal S}$ has no cycles.

Since $\mathcal G_{\mathcal S}$ is connected and has no cycles, between
any two vertices there is a unique path. Indeed, if two distinct paths
joined the same pair of vertices, their union would contain a cycle.
\end{remark}

We now assign an orientation to each edge. The edge associated with
\[
s_i=\left[x_{\ell_i},x_{r_i}\right]
\]
is oriented from $\ell_i$ to $r_i$, and the corresponding oriented
edge is denoted by $\left(\ell_i,r_i\right)$. This orientation is used only to
assign signs to the edges appearing in a path. For each elementary interval
\[
e_j=\left[x_{j-1},x_j\right],
\]
let $\mathcal P_j$ be the unique path in $\mathcal G_{\mathcal S}$
joining $j-1$ to $j$. We define
\begin{equation*} \varepsilon_{ji} = \begin{cases} 1, & \text{if the path } \mathcal P_j \text{ uses the edge of } s_i \text{ from } \ell_i \text{ to } r_i,\\ -1, & \text{if the path } \mathcal P_j \text{ uses the edge of } s_i \text{ from } r_i \text{ to } \ell_i,\\ 0, & \text{if the path } \mathcal P_j \text{ does not use the edge of } s_i. \end{cases} \end{equation*}
Let
\begin{equation}\label{eq_BS}
    B_{\mathcal S}
=
\left[\varepsilon_{ji}\right]_{j,i=1}^{N}\in\mathbb{R}^{N \times N}.
\end{equation}
The preceding path construction leads to an explicit representation of the inverse of the interval matrix.
\begin{theorem}
\label{thm:pathInverseIntervalMatrix}
If $A_{\mathcal S}$ is nonsingular, then
\[
A_{\mathcal S}^{-1}
=
B_{\mathcal S}.
\]
\end{theorem}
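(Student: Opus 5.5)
Since $A_{\mathcal S}$ is square, it suffices to prove one identity, namely $A_{\mathcal S}B_{\mathcal S}=I_N$, equivalently $B_{\mathcal S}A_{\mathcal S}=I_N$. I will prove $B_{\mathcal S}A_{\mathcal S}=I_N$ by reusing the cumulative-sum device from the proof of Theorem~\ref{thm:endpointTreeCriterion}. For each $i$, the row $i$ of $A_{\mathcal S}$ acts on a vector $\boldsymbol c\in\mathbb R^N$ as $[A_{\mathcal S}\boldsymbol c]_i=u_{r_i}-u_{\ell_i}$, where $u_0=0$ and $u_j=\sum_{k\le j}c_k$. The key observation is then a telescoping identity along paths.

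Fix $j$ and write $\mathcal P_j$ as $v_0=j-1,v_1,\ldots,v_m=j$. Each consecutive pair $\{v_{k-1},v_k\}$ is the edge of a unique interval $s_{i_k}$; uniqueness holds because the intervals are pairwise distinct, so edges correspond bijectively to intervals. By the definition of $\varepsilon$, we have
\[
\varepsilon_{j i_k}\left(u_{r_{i_k}}-u_{\ell_{i_k}}\right)=u_{v_k}-u_{v_{k-1}},
\]
and $\varepsilon_{ji}=0$ for every interval not on the path. Since the vertices of a path are distinct, each edge occurs at most once, so no interval is counted twice. Summing over $i$ therefore gives, for every $\boldsymbol c$,
\[
\left[B_{\mathcal S}A_{\mathcal S}\boldsymbol c\right]_j=\sum_{i=1}^N\varepsilon_{ji}\left(u_{r_i}-u_{\ell_i}\right)=\sum_{k=1}^m\left(u_{v_k}-u_{v_{k-1}}\right)=u_j-u_{j-1}=c_j .
\]
Hence $B_{\mathcal S}A_{\mathcal S}\boldsymbol c=\boldsymbol c$ for all $\boldsymbol c$, so $B_{\mathcal S}A_{\mathcal S}=I_N$ and $B_{\mathcal S}=A_{\mathcal S}^{-1}$.

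The points needing care are prerequisites rather than a real obstacle. First, $B_{\mathcal S}$ must be well defined, which requires the paths $\mathcal P_j$ to exist and be unique. This follows from Theorem~\ref{thm:endpointTreeCriterion} together with the subsequent remark that $\mathcal G_{\mathcal S}$ is a tree. Second, the sign convention has to be matched precisely: traversing $(\ell_i,r_i)$ forward contributes $+(u_{r_i}-u_{\ell_i})$, and traversing it backward contributes $-(u_{r_i}-u_{\ell_i})=u_{\ell_i}-u_{r_i}$, which is exactly the increment along the path in both cases. With these in place the computation is a direct telescoping.
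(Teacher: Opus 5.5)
Your proposal is correct and follows essentially the same route as the paper. Both define cumulative sums $u_j$, telescope $\sum_i\varepsilon_{ji}(u_{r_i}-u_{\ell_i})$ along the unique path $\mathcal P_j$ to obtain $c_j$, and conclude $B_{\mathcal S}A_{\mathcal S}=I_N$. Your explicit remarks on why edges correspond bijectively to intervals and why $B_{\mathcal S}$ is well defined are small clarifications that the paper leaves implicit.
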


\begin{proof}
Let
\[
\boldsymbol c=\left(c_1,\ldots,c_N\right)^{\top}\in\mathbb R^N
\]
and set
\[
\boldsymbol y=A_{\mathcal S}\boldsymbol c.
\]
As before, define
\[
u_0=0,
\quad
u_j=\sum_{k=1}^{j}c_k,
\quad j=1,\ldots,N.
\]
Then
\[
c_j=u_j-u_{j-1},
\quad j=1,\ldots,N.
\]
Moreover, for any interval
\[
s_i=\left[x_{\ell_i},x_{r_i}\right],
\]
we have
\[
y_i
=
\left[A_{\mathcal S}\boldsymbol c\right]_i
=
\sum_{k=\ell_i+1}^{r_i}c_k
=
u_{r_i}-u_{\ell_i}.
\]
Fix $j\in\{1,\ldots,N\}$. Write the unique path $\mathcal P_j$ joining
$j-1$ to $j$ as
\[
\mathcal P_j=(v_0,\ldots,v_m),
\quad
v_0=j-1,
\quad
v_m=j.
\]
For each $k=1,\ldots,m$, let $i_k$ be the index of the interval whose
edge is
\[
\left\{v_{k-1},v_k\right\}=\left\{\ell_{i_k},r_{i_k}\right\}.
\]
Then either
\[
\left(v_{k-1},v_k\right)=\left(\ell_{i_k},r_{i_k}\right)
\]
or
\[
\left(v_{k-1},v_k\right)=\left(r_{i_k},\ell_{i_k}\right).
\]
In the first case, by the definition of $\varepsilon_{j i_k}$, we have
\[
\varepsilon_{j i_k}=1,
\]
and therefore
\[
\varepsilon_{j i_k}y_{i_k}
=
y_{i_k}
=
u_{r_{i_k}}-u_{\ell_{i_k}}
=
u_{v_k}-u_{v_{k-1}}.
\]
In the second case,
\[
\varepsilon_{j i_k}=-1,
\]
and therefore
\[
\varepsilon_{j i_k}y_{i_k}
=
-y_{i_k}
=
u_{\ell_{i_k}}-u_{r_{i_k}}
=
u_{v_k}-u_{v_{k-1}}.
\]
Thus, in both cases, we have
\[
\varepsilon_{j i_k}y_{i_k}
=
u_{v_k}-u_{v_{k-1}},
\quad
k=1,\ldots,m.
\]
Since $\varepsilon_{ji}=0$ for the edges which do not belong to
$\mathcal P_j$, it follows that
\[
\sum_{i=1}^{N}\varepsilon_{ji}y_i
=
\sum_{k=1}^{m}\varepsilon_{j i_k}y_{i_k}
=
\sum_{k=1}^{m}
\left(u_{v_k}-u_{v_{k-1}}\right)
=
u_{v_m}-u_{v_0}.
\]
Using $v_0=j-1$ and $v_m=j$, we get
\[
\sum_{i=1}^{N}\varepsilon_{ji}y_i
=
u_j-u_{j-1}
=
c_j.
\]
Since this holds for every $j=1,\ldots,N$, we have
\[
B_{\mathcal S}\boldsymbol y
=
\boldsymbol c.
\]
Since $\boldsymbol y=A_{\mathcal S}\boldsymbol c$, we obtain
\[
B_{\mathcal S}A_{\mathcal S}\boldsymbol c
=
\boldsymbol c
\]
for every $\boldsymbol c\in\mathbb R^N$. Hence
\[
B_{\mathcal S}A_{\mathcal S}=I_N.
\]
Since $A_{\mathcal S}$ is nonsingular, the claim follows.
\end{proof}
\begin{corollary}
\label{cor:geometricNormsInverse}
If $A_{\mathcal S}$ is nonsingular, then
\[
\left\|A_{\mathcal S}^{-1}\right\|_{\infty}
=
\max_{1\leq j\leq N}
\left|\mathcal P_j\right|,
\]
where $\left|\mathcal P_j\right|$ denotes the number of edges in the path joining
$j-1$ to $j$.
\end{corollary}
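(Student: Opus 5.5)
The plan is to read the infinity norm directly off the explicit inverse given by Theorem~\ref{thm:pathInverseIntervalMatrix}. Since $A_{\mathcal S}$ is nonsingular, that theorem gives $A_{\mathcal S}^{-1}=B_{\mathcal S}=\left[\varepsilon_{ji}\right]_{j,i=1}^{N}$. The matrix infinity norm is the maximum absolute row sum, so
\[
\left\|A_{\mathcal S}^{-1}\right\|_{\infty}
=
\max_{1\leq j\leq N}\sum_{i=1}^{N}\left|\varepsilon_{ji}\right|.
\]
It therefore suffices to show that, for each fixed $j$, the $j$-th row sum of absolute values equals $\left|\mathcal P_j\right|$.

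Fix $j$ and write $\mathcal P_j=(v_0,\ldots,v_m)$ with $m=\left|\mathcal P_j\right|$. By definition, $\varepsilon_{ji}\in\{-1,0,1\}$, and $\varepsilon_{ji}\neq0$ exactly when the edge associated with $s_i$ is traversed by $\mathcal P_j$. Hence $\sum_i\left|\varepsilon_{ji}\right|$ counts the indices $i$ whose edge lies on the path. To conclude that this count is exactly $m$, I need two facts. First, the consecutive edges $\{v_{k-1},v_k\}$, $k=1,\ldots,m$, are pairwise distinct, because the vertices of a path are pairwise distinct by definition. Second, each edge corresponds to exactly one index $i$, meaning the map $i\mapsto\{\ell_i,r_i\}$ is injective. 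This holds because the intervals of $\mathcal S$ are pairwise distinct and an interval is determined by its endpoint pair.

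The only point needing care is that each edge on the path contributes exactly one nonzero entry, with no double counting and no parallel edges. That is exactly what the injectivity just noted provides. The remark preceding the orientation also shows the graph is a tree with exactly $N$ distinct edges, which guarantees that $\mathcal P_j$ is well defined. Combining these, the $j$-th row sum equals $m=\left|\mathcal P_j\right|$, and taking the maximum over $j$ gives the claimed formula.
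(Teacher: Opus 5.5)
Your proposal is correct and follows the same route as the paper: by Theorem~\ref{thm:pathInverseIntervalMatrix}, $A_{\mathcal S}^{-1}=B_{\mathcal S}$, and $\|A_{\mathcal S}^{-1}\|_\infty$ is the maximum absolute row sum of $B_{\mathcal S}$, where row $j$ has exactly one entry $\pm1$ for each edge of $\mathcal P_j$. Your extra checks, namely that the edges along a path are distinct and that $i\mapsto\{\ell_i,r_i\}$ is injective because the intervals are pairwise distinct, make explicit what the paper's short argument takes for granted.
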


\begin{proof}
By Theorem~\ref{thm:pathInverseIntervalMatrix}, the $j$-th row of
$A_{\mathcal S}^{-1}$ has one nonzero entry for each edge of
$\mathcal P_j$, and each such entry is equal to either $1$ or $-1$.
Therefore the sum of the absolute values in the $j$-th row is exactly
$\left|\mathcal P_j\right|$. Taking the maximum over $j$ gives the formula for
$\left\|A_{\mathcal S}^{-1}\right\|_{\infty}$.
\end{proof}

\begin{corollary}
\label{cor:infinityConditionNumberFactorization}
Assume that $A_{\mathcal S}$ is nonsingular, and set
\[
W_{\max}(\mathcal S)
=
\max_{1\leq i\leq N}
\left(r_i-\ell_i\right),
\quad
L_{\max}(\mathcal S)
=
\max_{1\leq j\leq N}
\left|\mathcal P_j\right|.
\]
Then
\[
\kappa_{\infty}
\left(
A_{\mathcal S}
\right)
=
W_{\max}(\mathcal S)
L_{\max}(\mathcal S).
\]
\end{corollary}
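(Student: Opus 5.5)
By definition, $\kappa_{\infty}(A_{\mathcal S})=\|A_{\mathcal S}\|_{\infty}\,\|A_{\mathcal S}^{-1}\|_{\infty}$. The plan is therefore to compute the two factors separately and multiply them.

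The inverse factor is already available. Corollary~\ref{cor:geometricNormsInverse} gives $\|A_{\mathcal S}^{-1}\|_{\infty}=\max_j|\mathcal P_j|=L_{\max}(\mathcal S)$. Its hypothesis, nonsingularity of $A_{\mathcal S}$, is exactly the one assumed here.

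The remaining step is to show that $\|A_{\mathcal S}\|_{\infty}=W_{\max}(\mathcal S)$.

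1. By definition, the $i$-th row of $A_{\mathcal S}$ has entries equal to $1$ for $\ell_i<j\le r_i$ and $0$ otherwise.
2. All entries are nonnegative, so the absolute row sum of row $i$ equals the number of indices $j$ with $\ell_i<j\le r_i$.
3. That count is $r_i-\ell_i$, the number of elementary cells contained in $s_i$.
4. Taking the maximum over $i=1,\ldots,N$ gives $\|A_{\mathcal S}\|_{\infty}=\max_i(r_i-\ell_i)=W_{\max}(\mathcal S)$.

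Multiplying the two norms yields the claimed identity. None of these steps is an obstacle: the only substantive ingredient, the path description of $A_{\mathcal S}^{-1}$, is supplied by Theorem~\ref{thm:pathInverseIntervalMatrix} through Corollary~\ref{cor:geometricNormsInverse}.
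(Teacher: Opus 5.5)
Your proposal is correct and follows essentially the same route as the paper. The paper likewise obtains $\|A_{\mathcal S}\|_{\infty}=W_{\max}(\mathcal S)$ by counting the $r_i-\ell_i$ unit entries in row $i$, takes $\|A_{\mathcal S}^{-1}\|_{\infty}=L_{\max}(\mathcal S)$ from Corollary~\ref{cor:geometricNormsInverse}, and multiplies the two.
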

\begin{proof}
The $i$-th row of $A_{\mathcal S}$ contains exactly
$r_i-\ell_i$ nonzero entries, all equal to one. Therefore,
\[
\left\|A_{\mathcal S}\right\|_{\infty}
=
\max_{1\leq i\leq N}
\left(r_i-\ell_i\right)
=
W_{\max}(\mathcal S).
\]
On the other hand, Corollary~\ref{cor:geometricNormsInverse} gives
\[
\left\|A_{\mathcal S}^{-1}\right\|_{\infty}
=
\max_{1\leq j\leq N}
\left|\mathcal P_j\right|
=
L_{\max}(\mathcal S).
\]
The result now follows from the definition of the infinity condition
number.
\end{proof}

\begin{remark}
The preceding identity separates two distinct features of the interval
family. The factor $W_{\max}(\mathcal S)$ measures the largest number of
elementary cells contained in a member of $\mathcal S$, whereas
$L_{\max}(\mathcal S)$ is the largest graph distance, in the endpoint
tree, between two consecutive grid vertices.
\end{remark}

\begin{remark}
Assume that $A_{\mathcal S}$ is nonsingular. Then, combining
$M_{\mathcal S}=A_{\mathcal S}M_{\mathcal E}$
with Corollary~\ref{cor:infinityConditionNumberFactorization}, we obtain
\[
\kappa_{\infty}
\left(
M_{\mathcal S}
\right)
\leq
W_{\max}(\mathcal S)
L_{\max}(\mathcal S)
\kappa_{\infty}
\left(
M_{\mathcal E}
\right).
\]
Thus, for a fixed grid, weight, and polynomial basis, the dependence of
the bound on the interval family is determined by its maximum
combinatorial width and by the maximum distance between consecutive grid vertices in the endpoint tree.
\end{remark}

The preceding geometric representation also provides a characterization of
the spectral condition number.

\begin{corollary}
\label{cor:spectralConditionNumberPathMatrix}
Assume that $A_{\mathcal S}$ is nonsingular. Then
\begin{equation}
\label{eq:spectralConditionNumberPathMatrix}
\kappa_2\left(A_{\mathcal S}\right)
=
\kappa_2\left(B_{\mathcal S}\right)
=
\left[
\frac{
\lambda_{\max}\left(B_{\mathcal S}^{\top}B_{\mathcal S}\right)
}{
\lambda_{\min}\left(B_{\mathcal S}^{\top}B_{\mathcal S}\right)
}
\right]^{1/2},
\end{equation}
where $B_{\mathcal S}$ is defined in~\eqref{eq_BS}. Moreover,
\[
\kappa_2\left(A_{\mathcal S}\right)
\geq
\sqrt{
W_{\max}(\mathcal S)L_{\max}(\mathcal S)
}.
\]
\end{corollary}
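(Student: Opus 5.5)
The first identity in~\eqref{eq:spectralConditionNumberPathMatrix} is immediate. By Theorem~\ref{thm:pathInverseIntervalMatrix} we have $B_{\mathcal S}=A_{\mathcal S}^{-1}$, and for any nonsingular matrix
\[
\kappa_2(A)=\|A\|_2\|A^{-1}\|_2=\kappa_2(A^{-1}).
\]
Hence $\kappa_2(A_{\mathcal S})=\kappa_2(B_{\mathcal S})$. For the second equality we use that the singular values of $B_{\mathcal S}$ are the square roots of the eigenvalues of the symmetric positive definite matrix $B_{\mathcal S}^{\top}B_{\mathcal S}$. Thus
\[
\|B_{\mathcal S}\|_2=\lambda_{\max}\left(B_{\mathcal S}^{\top}B_{\mathcal S}\right)^{1/2},
\qquad
\|B_{\mathcal S}^{-1}\|_2=\lambda_{\min}\left(B_{\mathcal S}^{\top}B_{\mathcal S}\right)^{-1/2},
\]
and taking the product gives~\eqref{eq:spectralConditionNumberPathMatrix}.

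For the lower bound, the plan is to bound $\|A_{\mathcal S}\|_2$ and $\|B_{\mathcal S}\|_2$ separately from below, using a single row of each matrix. For any matrix $C$, the $2$-norm dominates the Euclidean norm of every row, since $\|C\|_2=\|C^{\top}\|_2\geq\|C^{\top}\boldsymbol e_i\|_2$.

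Take $i$ achieving $W_{\max}(\mathcal S)$. Row $i$ of $A_{\mathcal S}$ consists of $r_i-\ell_i$ ones and zeros elsewhere, so its Euclidean norm is $\sqrt{W_{\max}(\mathcal S)}$. Hence
\[
\|A_{\mathcal S}\|_2\geq\sqrt{W_{\max}(\mathcal S)}.
\]
Similarly, take $j$ achieving $L_{\max}(\mathcal S)$. By Theorem~\ref{thm:pathInverseIntervalMatrix}, as already used in Corollary~\ref{cor:geometricNormsInverse}, row $j$ of $B_{\mathcal S}=A_{\mathcal S}^{-1}$ has exactly $|\mathcal P_j|$ entries equal to $\pm1$ and zeros elsewhere. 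Its Euclidean norm is therefore $\sqrt{L_{\max}(\mathcal S)}$, which gives
\[
\|A_{\mathcal S}^{-1}\|_2\geq\sqrt{L_{\max}(\mathcal S)}.
\]
Multiplying the two bounds yields
\[
\kappa_2(A_{\mathcal S})\geq\sqrt{W_{\max}(\mathcal S)L_{\max}(\mathcal S)}.
\]

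An alternative route to the lower bound is the norm inequality $\|C\|_\infty\leq\sqrt N\|C\|_2$ combined with Corollary~\ref{cor:infinityConditionNumberFactorization}. However, this only gives $\kappa_2\geq W_{\max}L_{\max}/N$, which is not the stated bound, so the row-norm argument above is the one I will use.

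There is no genuine obstacle here. The only point needing care is the row-norm inequality, which should be justified explicitly, for instance by applying $C^{\top}$ to the standard basis vector $\boldsymbol e_i$ and using $\|C\|_2=\|C^{\top}\|_2$.
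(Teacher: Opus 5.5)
Your proposal is correct and follows essentially the same route as the paper: invariance of $\kappa_2$ under inversion via $B_{\mathcal S}=A_{\mathcal S}^{-1}$, the identification of singular values with square roots of eigenvalues of $B_{\mathcal S}^{\top}B_{\mathcal S}$, and the row-norm lower bounds $\|A_{\mathcal S}\|_2\geq\sqrt{W_{\max}(\mathcal S)}$ and $\|B_{\mathcal S}\|_2\geq\sqrt{L_{\max}(\mathcal S)}$. Your explicit justification of the row-norm inequality through $\|C\|_2=\|C^{\top}\|_2\geq\|C^{\top}\boldsymbol e_i\|_2$ is a small addition; the paper simply asserts this step.
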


\begin{proof}
By Theorem~\ref{thm:pathInverseIntervalMatrix}, we have
\begin{equation}\label{ASm1B}
    A_{\mathcal S}^{-1}=B_{\mathcal S}.
\end{equation}
Since the singular values of an inverse matrix are the reciprocals of the
singular values of the original matrix, we get
\[
\sigma_{\max}\left(A_{\mathcal S}^{-1}\right)
=
\frac{1}{\sigma_{\min}\left(A_{\mathcal S}\right)},
\quad
\sigma_{\min}\left(A_{\mathcal S}^{-1}\right)
=
\frac{1}{\sigma_{\max}\left(A_{\mathcal S}\right)}.
\]
Therefore, using~\eqref{ASm1B}, we obtain
\[
\kappa_2\left(B_{\mathcal S}\right)=\kappa_2\left(A_{\mathcal S}^{-1}\right)
=
\frac{
\sigma_{\max}\left(A_{\mathcal S}^{-1}\right)
}{
\sigma_{\min}\left(A_{\mathcal S}^{-1}\right)
}
=
\frac{
\sigma_{\max}\left(A_{\mathcal S}\right)
}{
\sigma_{\min}\left(A_{\mathcal S}\right)
}
=
\kappa_2\left(A_{\mathcal S}\right).
\]
Moreover, by the definition of the singular values,
\[
\sigma_j^2\left(B_{\mathcal S}\right)
=
\lambda_j\left(
B_{\mathcal S}^{\top}B_{\mathcal S}
\right),
\]
and consequently
\[
\kappa_2\left(B_{\mathcal S}\right)
=
\left[
\frac{
\lambda_{\max}\left(
B_{\mathcal S}^{\top}B_{\mathcal S}
\right)
}{
\lambda_{\min}\left(
B_{\mathcal S}^{\top}B_{\mathcal S}
\right)
}
\right]^{1/2}.
\]
This proves~\eqref{eq:spectralConditionNumberPathMatrix}. The $i$-th row of $A_{\mathcal S}$ contains exactly
$r_i-\ell_i$ entries equal to one and therefore has Euclidean norm
\[
\sqrt{r_i-\ell_i}.
\]
Since the spectral norm is at least the Euclidean norm of each row, we have
\[
\left\|A_{\mathcal S}\right\|_2
\geq
\sqrt{W_{\max}(\mathcal S)}.
\]
Similarly, the $j$-th row of $B_{\mathcal S}$ contains exactly
$\left|\mathcal P_j\right|$ nonzero entries, each equal to $1$ or $-1$,
and hence
\[
\left\|B_{\mathcal S}\right\|_2
\geq
\sqrt{L_{\max}(\mathcal S)}.
\]
Then
\[
\kappa_2\left(A_{\mathcal S}\right)
=
\left\|A_{\mathcal S}\right\|_2
\left\|B_{\mathcal S}\right\|_2
\geq
\sqrt{
W_{\max}(\mathcal S)L_{\max}(\mathcal S)
}.
\]
\end{proof}

We now illustrate the preceding results with two concrete families of intervals. 
In both cases, the condition number in the infinity norm can be computed explicitly, 
while the spectral condition number can be bounded from below using the estimate derived above.

\begin{example}
\label{ex:nestedIntervals}
Let $N\geq2$, and consider the family of nested intervals
\[
\mathcal S_{\mathrm{nest}}
=
\left\{
s_1,\ldots,s_N
\right\},
\quad
s_i=\left[x_0,x_i\right],
\quad
i=1,\ldots,N.
\]
For each $i=1,\ldots,N$, we have $\ell_i=0$ and $r_i=i$. Therefore,
\[
\left[A_{\mathcal S_{\mathrm{nest}}}\right]_{ij}
=
\begin{cases}
1, & j\leq i,\\
0, & j>i,
\end{cases}
\quad
i,j=1,\ldots,N,
\]
and hence
\[
A_{\mathcal S_{\mathrm{nest}}}
=
\begin{pmatrix}
1 & 0 & 0 & \cdots & 0\\
1 & 1 & 0 & \cdots & 0\\
1 & 1 & 1 & \cdots & 0\\
\vdots & \vdots & \vdots & \ddots & \vdots\\
1 & 1 & 1 & \cdots & 1
\end{pmatrix}.
\]
Thus, $A_{\mathcal S_{\mathrm{nest}}}$ is lower triangular with unit
diagonal and is therefore nonsingular. The associated endpoint graph has edge set
\[
\mathcal E_{\mathcal S_{\mathrm{nest}}}
=
\left\{
\{0,i\}
\,:\,
i=1,\ldots,N
\right\}.
\]
Hence, $\mathcal G_{\mathcal S_{\mathrm{nest}}}$ is a star centered at
the vertex $0$ and is therefore connected. Thus,
Theorem~\ref{thm:endpointTreeCriterion} provides an alternative proof
of the unisolvence of $\mathcal S_{\mathrm{nest}}$. We now determine the inverse of the interval matrix. For $j=1$, the
unique path $\mathcal P_1$ joining $0$ to $1$ is given by the sequence
\[
0,1.
\]
For $j=2,\ldots,N$, the unique path $\mathcal P_j$ joining $j-1$ to $j$
is given by the sequence
\[
j-1,0,j.
\]
Since each edge $\{0,i\}$ is oriented from $0$ to $i$, we have
\[
\varepsilon_{11}=1,
\]
whereas
\[
\varepsilon_{j,j-1}=-1,
\quad
\varepsilon_{j,j}=1,
\quad
j=2,\ldots,N,
\]
and all the remaining entries vanish. Therefore,
Theorem~\ref{thm:pathInverseIntervalMatrix} gives
\[
A_{\mathcal S_{\mathrm{nest}}}^{-1}
=
\begin{pmatrix}
1 & 0 & 0 & \cdots & 0\\
-1 & 1 & 0 & \cdots & 0\\
0 & -1 & 1 & \cdots & 0\\
\vdots & \vdots & \ddots & \ddots & \vdots\\
0 & 0 & \cdots & -1 & 1
\end{pmatrix}.
\]
Moreover, we have
\[
\left|\mathcal P_1\right|=1,
\quad
\left|\mathcal P_j\right|=2,
\quad
j=2,\ldots,N.
\]
Hence, by Corollary~\ref{cor:geometricNormsInverse}, we obtain
\[
\left\|
A_{\mathcal S_{\mathrm{nest}}}^{-1}
\right\|_{\infty}
=
2.
\]
Thus, the infinity norm of the inverse is independent of $N$. On the
other hand,
\[
\left\|
A_{\mathcal S_{\mathrm{nest}}}
\right\|_{\infty}
=
N,
\]
and therefore
\[
\kappa_{\infty}
\left(
A_{\mathcal S_{\mathrm{nest}}}
\right)
=
2N.
\]
The same conclusion also follows directly from
Corollary~\ref{cor:infinityConditionNumberFactorization}. Indeed,
\[
W_{\max}
\left(
\mathcal S_{\mathrm{nest}}
\right)
=
N,
\quad
L_{\max}
\left(
\mathcal S_{\mathrm{nest}}
\right)
=
2,
\]
and hence
\[
\kappa_{\infty}
\left(
A_{\mathcal S_{\mathrm{nest}}}
\right)
=
W_{\max}
\left(
\mathcal S_{\mathrm{nest}}
\right)
L_{\max}
\left(
\mathcal S_{\mathrm{nest}}
\right)
=
2N.
\]
Moreover, by Corollary~\ref{cor:spectralConditionNumberPathMatrix}, we get
\[
\kappa_2\left(A_{\mathcal S_{\mathrm{nest}}}\right)
\geq \sqrt{2N}.
\]
\end{example}

\begin{example}
\label{ex:anchoredSlidingWindow}
Let $N\geq3$, and consider the sliding-window family
\[
\mathcal S_{\mathrm{sw}}
=
\left\{
\left[x_i,x_{i+2}\right]
\,:\,
i=0,\ldots,N-2
\right\}.
\]
Each interval is the union of two consecutive elementary intervals. Since
$\mathcal S_{\mathrm{sw}}$ contains only $N-1$ intervals,
Theorem~\ref{thm:endpointTreeCriterion} cannot be applied directly. Its
endpoint graph, however, suggests a natural way to complete the family. The edge set of the endpoint graph is
\[
\mathcal E_{\mathcal S_{\mathrm{sw}}}
=
\left\{
\{i,i+2\}
\,:\,
i=0,\ldots,N-2
\right\}.
\]
These edges form two disjoint paths, one containing the even vertices and
the other containing the odd vertices. Hence,
$\mathcal G_{\mathcal S_{\mathrm{sw}}}$ is disconnected. We now add the auxiliary elementary interval $\left[x_0,x_1\right]$ and define
\[
\widehat{\mathcal S}_{\mathrm{sw}}
=
\left\{
\left[x_0,x_1\right]
\right\}
\cup
\left\{
\left[x_i,x_{i+2}\right]
\,:\,
i=0,\ldots,N-2
\right\}.
\]
The family $\widehat{\mathcal S}_{\mathrm{sw}}$ consists of $N$ pairwise
distinct intervals. Its endpoint graph is obtained from
$\mathcal G_{\mathcal S_{\mathrm{sw}}}$ by adding the edge $\{0,1\}$,
which joins the paths containing the even and odd vertices. Therefore,
$\mathcal G_{\widehat{\mathcal S}_{\mathrm{sw}}}$ is connected. It follows
from Theorem~\ref{thm:endpointTreeCriterion} that the interval matrix
$A_{\widehat{\mathcal S}_{\mathrm{sw}}}$ is nonsingular and that
$\widehat{\mathcal S}_{\mathrm{sw}}$ is unisolvent on $\Pi_{N-1}$. Thus, a single  auxiliary interval turns the sliding-window family into a unisolvent configuration, as illustrated in Fig.~\ref{fig:slidingWindow}.

We now determine the interval matrix of the supplemented family and its
condition number in the infinity norm. We set
\[
s_1=\left[x_0,x_1\right],
\quad
s_i=\left[x_{i-2},x_i\right],
\quad
i=2,\ldots,N.
\]
Then
\[
A_{\widehat{\mathcal S}_{\mathrm{sw}}}
=
\begin{pmatrix}
1 & 0 & 0 & \cdots & 0\\
1 & 1 & 0 & \cdots & 0\\
0 & 1 & 1 & \cdots & 0\\
\vdots & \ddots & \ddots & \ddots & \vdots\\
0 & \cdots & 0 & 1 & 1
\end{pmatrix}.
\]
Let
\[
D_N
=
\operatorname{diag}
\left(
1,-1,1,\ldots,(-1)^{N-1}
\right).
\]
Then
\[
D_N^{\top}=D_N^{-1}=D_N.
\]
Using the expression for
$A_{\mathcal S_{\mathrm{nest}}}^{-1}$ obtained in
Example~\ref{ex:nestedIntervals}, we have
\begin{equation}
\label{eq:nestedSlidingRelation}
A_{\widehat{\mathcal S}_{\mathrm{sw}}}
=
D_N
A_{\mathcal S_{\mathrm{nest}}}^{-1}
D_N.
\end{equation}
Equivalently,
\[
A_{\widehat{\mathcal S}_{\mathrm{sw}}}^{-1}
=
D_N
A_{\mathcal S_{\mathrm{nest}}}
D_N.
\]
Therefore,
\[
A_{\widehat{\mathcal S}_{\mathrm{sw}}}^{-1}
=
\begin{pmatrix}
1 & 0 & 0 & \cdots & 0\\
-1 & 1 & 0 & \cdots & 0\\
1 & -1 & 1 & \cdots & 0\\
\vdots & \vdots & \ddots & \ddots & \vdots\\
(-1)^{N-1} & (-1)^{N-2} & \cdots & -1 & 1
\end{pmatrix}.
\]
The $j$-th row of
$A_{\widehat{\mathcal S}_{\mathrm{sw}}}^{-1}$
has exactly $j$ nonzero entries. By
Theorem~\ref{thm:pathInverseIntervalMatrix}, these entries correspond to the
edges of the unique path $\mathcal P_j$. Hence,
\[
\left|\mathcal P_j\right|=j,
\quad
j=1,\ldots,N.
\]
Hence, by Corollary~\ref{cor:geometricNormsInverse},
\[
\left\|
A_{\widehat{\mathcal S}_{\mathrm{sw}}}^{-1}
\right\|_{\infty}
=
N.
\]
On the other hand,
\[
\left\|
A_{\widehat{\mathcal S}_{\mathrm{sw}}}
\right\|_{\infty}
=
2,
\]
and therefore
\[
\kappa_{\infty}
\left(
A_{\widehat{\mathcal S}_{\mathrm{sw}}}
\right)
=
2N.
\]
The same conclusion also follows from
Corollary~\ref{cor:infinityConditionNumberFactorization}. Indeed,
\[
W_{\max}
\left(
\widehat{\mathcal S}_{\mathrm{sw}}
\right)
=
2,
\quad
L_{\max}
\left(
\widehat{\mathcal S}_{\mathrm{sw}}
\right)
=
N,
\]
and hence
\[
\kappa_{\infty}
\left(
A_{\widehat{\mathcal S}_{\mathrm{sw}}}
\right)
=
W_{\max}
\left(
\widehat{\mathcal S}_{\mathrm{sw}}
\right)
L_{\max}
\left(
\widehat{\mathcal S}_{\mathrm{sw}}
\right)
=
2N.
\]
Consequently, 
Corollary~\ref{cor:spectralConditionNumberPathMatrix} gives
\[
\kappa_2\left(
A_{\widehat{\mathcal S}_{\mathrm{sw}}}
\right)
\geq
\sqrt{2N}.
\]
By symmetry, the same conclusions hold if
$\left[x_0,x_1\right]$ is replaced by
$\left[x_{N-1},x_N\right]$. Sliding-window constructions are standard in statistical applications~\cite{Datar:2002:MSS}.
\end{example}

\begin{figure}[ht]
  \centering
  \includegraphics[width=0.49\textwidth]{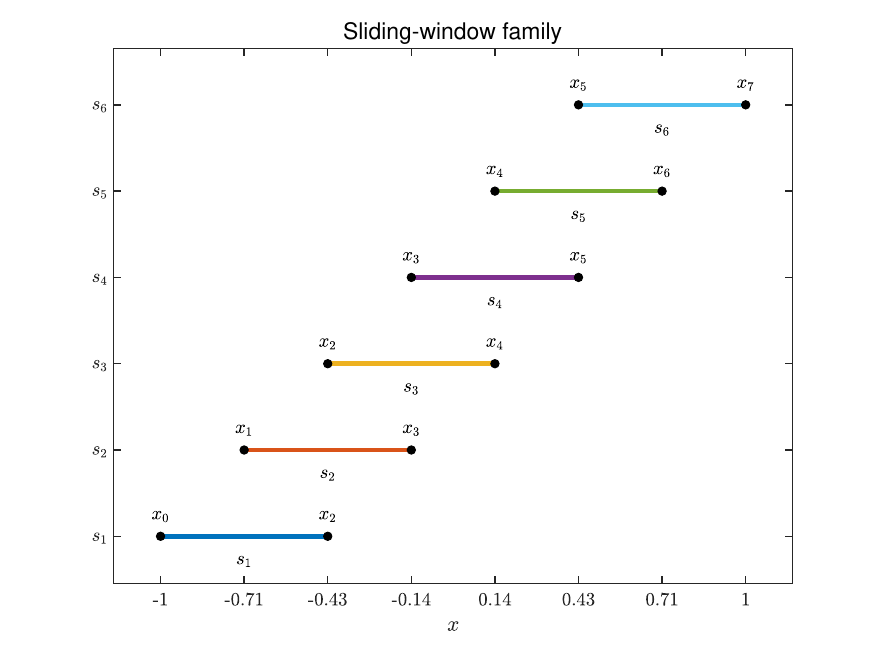}
  \hfill
  \includegraphics[width=0.49\textwidth]{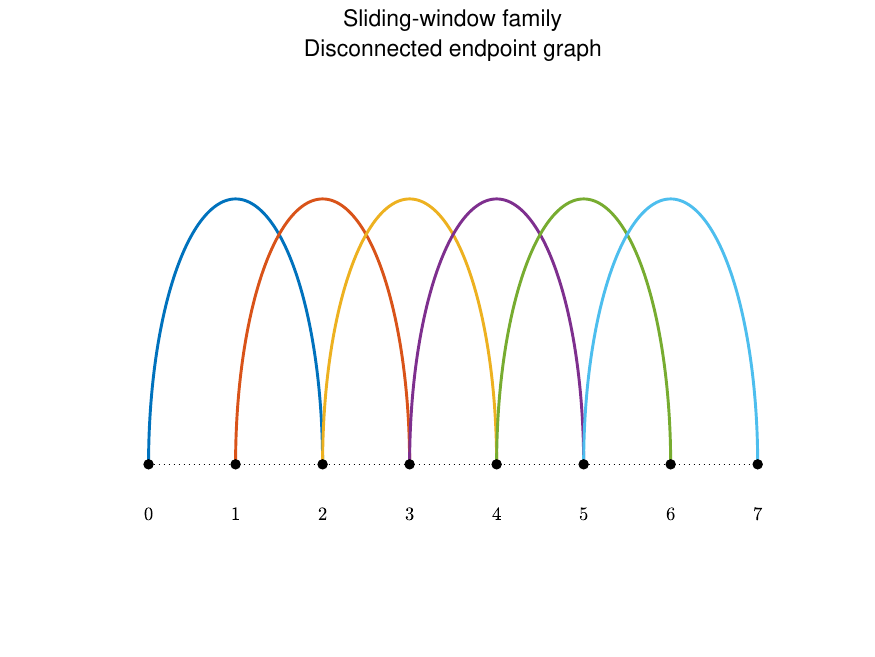}
    \includegraphics[width=0.49\textwidth]{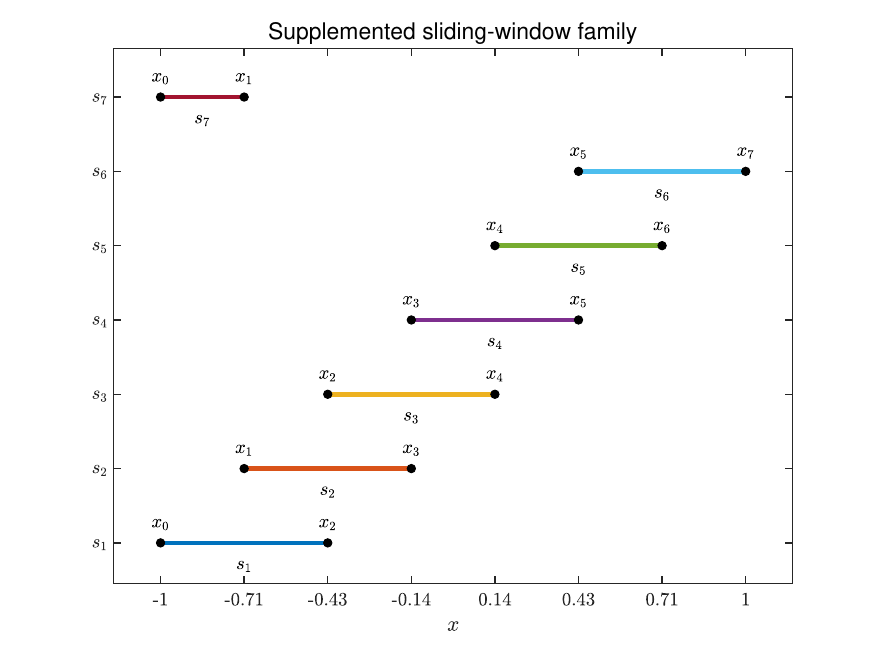}
  \hfill
  \includegraphics[width=0.49\textwidth]{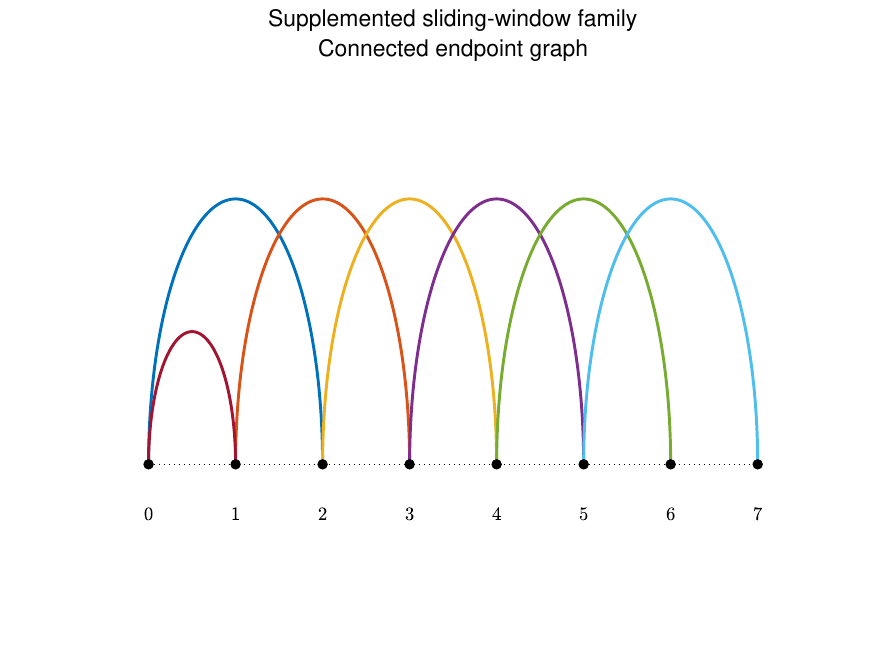}
\caption{Sliding-window configurations for $N=7$. Top: the family
$\mathcal S_{\mathrm{sw}}$ (left) and its endpoint graph (right).
Bottom: the supplemented family
$\widehat{\mathcal S}_{\mathrm{sw}}$ (left) and its endpoint graph (right).
The additional interval $\left[x_0,x_1\right]$ introduces the edge $\{0,1\}$, joins
the two components, and yields a connected graph.}
  \label{fig:slidingWindow}
\end{figure}

We now examine the spectral consequences of
relation~\eqref{eq:nestedSlidingRelation}. The matrix
$A_{\mathcal S_{\mathrm{nest}}}^{-1}$ is the reduced node-arc
incidence matrix of an oriented path with $N+1$ vertices. Since
multiplication by $D_N$ only changes the signs of rows and columns (in fact $D_N$ is a unitary matrix),
relation~\eqref{eq:nestedSlidingRelation} implies that
$A_{\widehat{\mathcal S}_{\mathrm{sw}}}$ has the same singular values
as this incidence matrix; see~\cite[Section~2.2]{Frangioni:2001:SAO}.
This observation yields explicit formulas for the singular values and
the spectral condition numbers of both matrices. We then study the
asymptotic singular value distributions of the corresponding matrix
sequences within the GLT
framework~\cite{Garoni:2017:GLT1,Garoni:2018:GLT2}.

\begin{proposition}
\label{prop:nestedSlidingSpectralProperties}
For any $N\geq3$, the singular values of
$A_{\widehat{\mathcal S}_{\mathrm{sw}}}$, up to ordering, are
\[
2\sin
\left(
\frac{(2j-1)\pi}{4N+2}
\right),
\quad
j=1,\ldots,N,
\]
whereas the singular values of
$A_{\mathcal S_{\mathrm{nest}}}$ are
\[
\frac{1}{
2\sin
\left(
\frac{(2j-1)\pi}{4N+2}
\right)},
\quad
j=1,\ldots,N.
\]
Consequently,
\begin{equation}\label{eq:statedformula}
    \kappa_2
\left(
A_{\mathcal S_{\mathrm{nest}}}
\right)
=
\kappa_2
\left(
A_{\widehat{\mathcal S}_{\mathrm{sw}}}
\right)
=
\frac{
\cos\left(\dfrac{\pi}{2N+1}\right)
}{
\sin\left(\dfrac{\pi}{4N+2}\right)
},
\end{equation}
and
\[
\kappa_2
\left(
A_{\mathcal S_{\mathrm{nest}}}
\right)
=
\kappa_2
\left(
A_{\widehat{\mathcal S}_{\mathrm{sw}}}
\right)
\sim
\frac{4}{\pi}N, \quad  N\to\infty.
\]
\end{proposition}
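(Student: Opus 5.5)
Let $L:=A_{\mathcal S_{\mathrm{nest}}}^{-1}$, the lower bidiagonal matrix with $1$ on the diagonal and $-1$ on the subdiagonal computed in Example~\ref{ex:nestedIntervals}. By~\eqref{eq:nestedSlidingRelation}, $A_{\widehat{\mathcal S}_{\mathrm{sw}}}=D_NLD_N$ with $D_N$ orthogonal, so $A_{\widehat{\mathcal S}_{\mathrm{sw}}}$ and $L$ have the same singular values. Since the singular values of $A_{\mathcal S_{\mathrm{nest}}}=L^{-1}$ are the reciprocals of those of $L$, both lists in the statement reduce to the following claim: the eigenvalues of $L^{\top}L$ are
\[
4\sin^2\left(\frac{(2j-1)\pi}{4N+2}\right),\quad j=1,\ldots,N.
\]

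To prove this claim, compute $T:=L^{\top}L$ directly. It is tridiagonal, with off-diagonal entries $-1$ and diagonal $(2,2,\ldots,2,1)$, because the last column of $L$ has a single nonzero entry. This is the discrete Laplacian with a Dirichlet condition at the first end and a Neumann condition at the last end. We make the ansatz $v_k=\sin(k\theta)$, $k=1,\ldots,N$.
\begin{itemize}
\item The interior rows $-v_{k-1}+2v_k-v_{k+1}=\lambda v_k$ and the first row (using $v_0=0$) give $\lambda=2-2\cos\theta=4\sin^2(\theta/2)$.
\item The last row $-v_{N-1}+v_N=\lambda v_N$ is equivalent to the condition $v_{N+1}=v_N$, i.e.\ $\sin((N+1)\theta)=\sin(N\theta)$.
\item That condition amounts to $2\cos\bigl((2N+1)\theta/2\bigr)\sin(\theta/2)=0$, hence $\theta_j=\frac{(2j-1)\pi}{2N+1}$ for $j=1,\ldots,N$.
\end{itemize}
These $\theta_j$ lie in $(0,\pi)$ and are pairwise distinct, so the values $4\sin^2(\theta_j/2)$ are distinct and the corresponding vectors $v$ are nonzero. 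We therefore obtain $N$ distinct eigenvalues, which form the whole spectrum. Their square roots are $2\sin\frac{(2j-1)\pi}{4N+2}$.

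The main obstacle is keeping this boundary-condition bookkeeping exact; alternatively one could cite~\cite{Frangioni:2001:SAO} for the incidence matrix of a path.

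For the condition number, write $\phi=\frac{\pi}{4N+2}$. The singular values $2\sin((2j-1)\phi)$ are increasing in $j$, since $(2j-1)\phi<\pi/2$. Hence
\[
\kappa_2=\frac{\sin((2N-1)\phi)}{\sin\phi}.
\]
Since $(2N-1)\phi=\frac{\pi}{2}-2\phi$, we have $\sin((2N-1)\phi)=\cos(2\phi)=\cos\frac{\pi}{2N+1}$, which gives~\eqref{eq:statedformula}. The two condition numbers coincide because inversion preserves $\kappa_2$. Finally, as $N\to\infty$ the numerator tends to $1$ and $\sin\phi\sim\frac{\pi}{4N}$, so $\kappa_2\sim\frac{4}{\pi}N$.
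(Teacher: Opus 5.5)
Your proposal is correct and follows essentially the paper's proof. Both arguments use $D_N$ to reduce the problem to a single tridiagonal matrix with diagonal $(2,\ldots,2,1)$, exhibit $N$ sine eigenvectors with $\theta_j=(2j-1)\pi/(2N+1)$, and conclude from the distinctness of the eigenvalues. The only difference is cosmetic: you diagonalize $L^{\top}L$, which has off-diagonals $-1$ and eigenvectors $\sin(k\theta_j)$, while the paper diagonalizes $A_{\widehat{\mathcal S}_{\mathrm{sw}}}^{\top}A_{\widehat{\mathcal S}_{\mathrm{sw}}}$, which has off-diagonals $+1$ and eigenvectors $(-1)^{k-1}\sin(k\theta_j)$; the two matrices are conjugate via $D_N$.
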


\begin{proof}
Since $D_N$ is unitary, relation~\eqref{eq:nestedSlidingRelation} shows that
$A_{\widehat{\mathcal S}_{\mathrm{sw}}}$ and
$A_{\mathcal S_{\mathrm{nest}}}^{-1}$ have the same singular values.
Therefore, the singular values of
$A_{\mathcal S_{\mathrm{nest}}}$ are the reciprocals of those of
$A_{\widehat{\mathcal S}_{\mathrm{sw}}}$. A direct computation gives
\[
A_{\widehat{\mathcal S}_{\mathrm{sw}}}^{\top}
A_{\widehat{\mathcal S}_{\mathrm{sw}}}
=
\begin{pmatrix}
2 & 1 & 0 & \cdots & 0\\
1 & 2 & 1 & \ddots & \vdots\\
0 & 1 & 2 & \ddots & 0\\
\vdots & \ddots & \ddots & \ddots & 1\\
0 & \cdots & 0 & 1 & 1
\end{pmatrix}.
\]
For $j=1,\ldots,N$, set
\[
\theta_j
=
\frac{(2j-1)\pi}{2N+1}
\]
and define
\[
\boldsymbol v^{(j)}
=
\left(
v_1^{(j)},\ldots,v_N^{(j)}
\right)^{\top},
\quad
v_k^{(j)}
=
(-1)^{k-1}\sin\left(k\theta_j\right).
\]
Using
\[
\sin\left((k-1)\theta\right)
+
\sin\left((k+1)\theta\right)
=
2\cos(\theta)\sin(k\theta),
\]
together with
\[
\sin\left((N+1)\theta_j\right)
=
\sin\left(N\theta_j\right),
\]
straightforward computations give
\[
A_{\widehat{\mathcal S}_{\mathrm{sw}}}^{\top}
A_{\widehat{\mathcal S}_{\mathrm{sw}}}
\boldsymbol v^{(j)}
=
\left(
2-2\cos\left(\theta_j\right)
\right)
\boldsymbol v^{(j)}.
\]
Thus, for each $j=1,\ldots,N$,
the vector $\boldsymbol v^{(j)}$ is an eigenvector of
$A_{\widehat{\mathcal S}_{\mathrm{sw}}}^{\top}
A_{\widehat{\mathcal S}_{\mathrm{sw}}}$
associated with the eigenvalue
\[
\lambda_j
=
2-2\cos\left(\theta_j\right)
=
4\sin^2
\left(
\frac{(2j-1)\pi}{4N+2}
\right).
\]
Since
\begin{equation*}
    0<\theta_1<\cdots<\theta_N<\pi,
\end{equation*}
the eigenvalues $\lambda_1,\ldots,\lambda_N$ are pairwise distinct.
They therefore constitute the full spectrum of
$A_{\widehat{\mathcal S}_{\mathrm{sw}}}^{\top}
A_{\widehat{\mathcal S}_{\mathrm{sw}}}$. Moreover,
\[
0<
\frac{\theta_1}{2}
<
\cdots
<
\frac{\theta_N}{2}
<
\frac{\pi}{2}.
\]
Therefore, since the sine function is strictly increasing on
\[
\left(0,\frac{\pi}{2}\right),
\]
we obtain
\[
\sigma_{\min}
\left(
A_{\widehat{\mathcal S}_{\mathrm{sw}}}
\right)
=
2\sin
\left(
\frac{\theta_1}{2}
\right)
=
2\sin
\left(
\frac{\pi}{4N+2}
\right),
\]
and
\[
\sigma_{\max}
\left(
A_{\widehat{\mathcal S}_{\mathrm{sw}}}
\right)
=
2\sin
\left(
\frac{\theta_N}{2}
\right)
=
2\sin
\left(
\frac{(2N-1)\pi}{4N+2}
\right)
=
2\cos
\left(
\frac{\pi}{2N+1}
\right).
\]
This gives the stated formula~\eqref{eq:statedformula}.  Moreover, we have
\[
\sin
\left(
\frac{\pi}{4N+2}
\right)
\sim
\frac{\pi}{4N},
\quad
\lim_{N\to \infty}\cos
\left(
\frac{\pi}{2N+1}
\right)=1,
\]
and therefore
\[
\kappa_2
\left(
A_{\mathcal S_{\mathrm{nest}}}
\right)
=
\kappa_2
\left(
A_{\widehat{\mathcal S}_{\mathrm{sw}}}
\right)
\sim
\frac{4}{\pi}N.
\]
\end{proof}

\begin{remark}
The preceding matrix sequences also admit an explicit description of their singular value distributions in terms of their GLT symbols.  Indeed,
$A_{\widehat{\mathcal S}_{\mathrm{sw}}}$ is the Toeplitz matrix
generated by
\[
f_1(\theta)=1+e^{\mathrm{i}\theta},
\quad \theta\in[-\pi,\pi],
\]
and therefore
\[
\left\{
A_{\widehat{\mathcal S}_{\mathrm{sw}}}
\right\}_{N}
\sim_{\mathrm{GLT}}
1+e^{\mathrm{i}\theta}.
\]
Consequently,
\[
\left\{
A_{\widehat{\mathcal S}_{\mathrm{sw}}}
\right\}_{N}
\sim_{\sigma}
1+e^{\mathrm{i}\theta}.
\]
Moreover,
$A_{\mathcal S_{\mathrm{nest}}}^{-1}$ is the Toeplitz matrix
generated by
\[
f_2(\theta)=1-e^{\mathrm{i}\theta}, \quad \theta\in[-\pi,\pi],
\]
so that
\[
\left\{
A_{\mathcal S_{\mathrm{nest}}}^{-1}
\right\}_{N}
\sim_{\mathrm{GLT}}
1-e^{\mathrm{i}\theta}.
\]
Since $1-e^{\mathrm{i}\theta}$ vanishes only at $\theta=0$, it is
nonzero almost everywhere. Hence, by the GLT inversion property, we have
\[
\left\{
A_{\mathcal S_{\mathrm{nest}}}
\right\}_{N}
\sim_{\mathrm{GLT}}
\frac{1}{1-e^{\mathrm{i}\theta}},
\]
and therefore
\[
\left\{
A_{\mathcal S_{\mathrm{nest}}}
\right\}_{N}
\sim_{\sigma}
\frac{1}{1-e^{\mathrm{i}\theta}}.
\]
The corresponding nonnegative singular value symbols are
\[
\left|1+e^{\mathrm{i}\theta}\right|
=
2\left|\cos\left(\frac{\theta}{2}\right)\right|
\]
and
\[
\left|
\frac{1}{1-e^{\mathrm{i}\theta}}
\right|
=
\frac{1}{
2\left|\sin\left(\frac{\theta}{2}\right)\right|},
\]
respectively.
\end{remark}

\section{Exactly diagonal weighted histopolation configurations}\label{sec3}

In this section, we present a weighted histopolation configuration for which
the associated Gram matrix is exactly diagonal. We focus on the Chebyshev
weight of the first kind, namely
\[
\alpha=\beta=-\frac{1}{2},
\quad
\omega_{-1/2,-1/2}(x)=\frac{1}{\sqrt{1-x^2}}.
\]
Using the classical Jacobi normalization~\cite{Milovanovic:1997:OPS}, we have
\begin{equation}
\label{ChebJacobiNormalization}
P_k^{(-1/2,-1/2)}(\cos\theta)
=
a_k\cos(k\theta),
\quad
a_k=\frac{(1/2)_k}{k!},
\quad
k\geq0,
\end{equation}
where, for a given number $s$, $(s)_k$ denotes the Pochhammer symbol for the rising factorial. Let $N\geq1$ and set
\begin{equation}
\label{rhodef}
0<\rho<\frac{\pi}{2N},
\quad
\tau_i=\frac{(2i-1)\pi}{2N},
\quad
i=1,\ldots,N.
\end{equation}
We define the cells of constant angular length
\begin{equation}
\label{constantAngularCells}
s_i=
\left[\cos\left(\tau_i+\rho\right),\cos\left(\tau_i-\rho\right)\right],
\quad
i=1,\ldots,N.
\end{equation}
The restriction in~\eqref{rhodef} guarantees that all the intervals
in~\eqref{constantAngularCells} are contained in $(-1,1)$. For each cell,
we define its weighted mass by
\begin{equation*}
m_i=
\int_{\cos\left(\tau_i+\rho\right)}^{\cos\left(\tau_i-\rho\right)}
\omega_{-1/2,-1/2}(x)dx.
\end{equation*}
The weighted Chebyshev histopolation matrix
$H_N\in\mathbb R^{N\times N}$ is then defined by
\begin{equation}
\label{HNchdef}
\left[H_N\right]_{i,k+1}
=
\frac{1}{m_i}
\int_{\cos\left(\tau_i+\rho\right)}^{\cos\left(\tau_i-\rho\right)}
P_k^{(-1/2,-1/2)}(x)\omega_{-1/2,-1/2}(x)dx,
\end{equation}
for $i=1,\ldots,N$ and $k=0,\ldots,N-1$.

We first recall the following standard discrete cosine orthogonality
relations; see, for instance, \cite[Chapter~4]{Mason:2002:CP}.

\begin{lemma}\label{lemmaCosineDiscreteOrthogonality}
Let $\tau_i$, $i=1,\ldots,N$, be defined by~\eqref{rhodef}. Then
\begin{equation}\label{cosineSumZero}
\sum_{i=1}^N \cos\left(m\tau_i\right)=0, \quad m=1,\dots,2N-1.
\end{equation}
Consequently,
\begin{equation}\label{cosineDiscreteOrthogonality}
\sum_{i=1}^N\cos\left(k\tau_i\right)\cos\left(\ell\tau_i\right)
=
\begin{cases}
0, & k\neq \ell,\\
N/2, & k=\ell,
\end{cases}
\end{equation}
for any $1\le k,\ell\le N-1.$
\end{lemma}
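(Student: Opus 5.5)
We first establish \eqref{cosineSumZero} by a geometric-series computation and then derive \eqref{cosineDiscreteOrthogonality} from it through the product-to-sum formula.

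\textbf{Step 1: the vanishing sums.} Fix $m\in\{1,\ldots,2N-1\}$ and write
\[
\sum_{i=1}^N\cos\left(m\tau_i\right)=\operatorname{Re}\sum_{i=1}^N e^{\mathrm{i}m\tau_i},
\quad
e^{\mathrm{i}m\tau_i}=e^{\mathrm{i}m\pi/(2N)}\,z^{\,i-1},
\quad
z=e^{\mathrm{i}m\pi/N}.
\]
Since $1\le m\le 2N-1$, we have $m\pi/N\in(0,2\pi)$, so $z\neq1$ and the geometric sum equals
\[
e^{\mathrm{i}m\pi/(2N)}\,\frac{z^N-1}{z-1}
=
e^{\mathrm{i}m\pi/(2N)}\,\frac{(-1)^m-1}{e^{\mathrm{i}m\pi/N}-1}.
\]
If $m$ is even, the numerator vanishes and we are done. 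If $m$ is odd, the expression becomes
\[
\frac{-2\,e^{\mathrm{i}m\pi/(2N)}}{e^{\mathrm{i}m\pi/(2N)}\left(e^{\mathrm{i}m\pi/(2N)}-e^{-\mathrm{i}m\pi/(2N)}\right)}
=
\frac{-2}{2\mathrm{i}\sin\left(m\pi/(2N)\right)},
\]
which is purely imaginary. Here $\sin\left(m\pi/(2N)\right)\neq0$ because $0<m<2N$. In either case the real part is zero, which proves \eqref{cosineSumZero}.

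\textbf{Step 2: orthogonality.} For $1\le k,\ell\le N-1$, the product-to-sum formula gives
\[
\cos\left(k\tau_i\right)\cos\left(\ell\tau_i\right)
=
\tfrac12\left[\cos\left((k+\ell)\tau_i\right)+\cos\left((k-\ell)\tau_i\right)\right].
\]
We have $2\le k+\ell\le 2N-2$, so by Step 1 the sum over $i$ of the first term vanishes. For the second term:
\begin{itemize}
\item If $k\neq\ell$, then $1\le|k-\ell|\le N-2$. Since cosine is even, Step 1 again gives zero, so the total sum is $0$.
\item If $k=\ell$, then $\cos\left((k-\ell)\tau_i\right)=1$ for every $i$, and the total sum is $N/2$.
\end{itemize}
This yields \eqref{cosineDiscreteOrthogonality}.

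\textbf{Main obstacle.} The only delicate point is the parity split in Step 1, specifically the odd case. There one must check that the sum is purely imaginary rather than zero, and ensure that neither $z-1$ nor $\sin\left(m\pi/(2N)\right)$ vanishes. This is exactly where the restriction $m\le 2N-1$ is used.
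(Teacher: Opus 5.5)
Your proof is correct, with no gaps: the geometric sum $\sum_{i} e^{\mathrm{i}m\tau_i}$ is zero for even $m$ and purely imaginary for odd $m$ (using $z\neq 1$ and $\sin\left(m\pi/(2N)\right)\neq 0$ for $1\le m\le 2N-1$), and the product-to-sum step then reduces \eqref{cosineDiscreteOrthogonality} to \eqref{cosineSumZero}, since $2\le k+\ell\le 2N-2$ and $1\le |k-\ell|\le N-2$ for $k\neq\ell$. The paper gives no proof of its own and cites the relations as standard (Mason and Handscomb, Chapter~4), so your argument supplies the standard derivation its \enquote{Consequently} points to.
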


We now prove that, for the above choice of cells, the Gram matrix associated
with $H_N$ is diagonal.

\begin{theorem}
\label{thmChebFirstKindDiagonalGram}
For any $0<\rho< \frac{\pi}{2N}$, the matrix $H_N$ defined in~\eqref{HNchdef} satisfies
\begin{equation}\label{HNchGram}
\left(H_N\right)^{\top} H_N
=
\operatorname{diag}\left(\mu_0,\mu_1,\ldots,\mu_{N-1}\right),
\end{equation}
where
\begin{equation*}
\mu_0=N, \quad \mu_k
=
\frac{N}{2}
\left[
a_k\frac{\sin(k\rho)}{k\rho}
\right]^2, \quad k=1,\dots,N-1.
\end{equation*}
In particular, $H_N$ is nonsingular.
\end{theorem}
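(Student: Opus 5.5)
The plan is to compute the entries of $H_N$ in closed form via the substitution $x=\cos\theta$. Under this substitution the weight and the Jacobian combine into $d\theta$, because $\omega_{-1/2,-1/2}(x)\,dx=-d\theta$ on $(0,\pi)$. The cell $s_i$ corresponds to the angular interval $[\tau_i-\rho,\tau_i+\rho]$, which lies in $(0,\pi)$ by~\eqref{rhodef}. Hence $m_i=2\rho$ for every $i$, so the normalization is uniform. Using~\eqref{ChebJacobiNormalization}, for $k\ge1$ we get
\[
\int_{s_i}P_k^{(-1/2,-1/2)}(x)\,\omega_{-1/2,-1/2}(x)\,dx
= a_k\int_{\tau_i-\rho}^{\tau_i+\rho}\cos(k\theta)\,d\theta
= \frac{2a_k}{k}\sin(k\rho)\cos(k\tau_i).
\]
Therefore $[H_N]_{i,1}=1$ and $[H_N]_{i,k+1}=c_k\cos(k\tau_i)$ with $c_k=a_k\frac{\sin(k\rho)}{k\rho}$. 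In matrix form, $H_N=C_N\,\operatorname{diag}(1,c_1,\ldots,c_{N-1})$, where $[C_N]_{i,k+1}=\cos(k\tau_i)$ for $k=0,\ldots,N-1$.

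Next we compute $C_N^\top C_N$ entrywise using Lemma~\ref{lemmaCosineDiscreteOrthogonality}. The $(1,1)$ entry is $\sum_i 1=N$. For $1\le k\le N-1$, the $(1,k+1)$ entry is $\sum_i\cos(k\tau_i)$, which vanishes by~\eqref{cosineSumZero} since $k\le 2N-1$. For $1\le k,\ell\le N-1$, relation~\eqref{cosineDiscreteOrthogonality} gives $\frac N2\delta_{k\ell}$. Hence $C_N^\top C_N=\operatorname{diag}(N,\frac N2,\ldots,\frac N2)$. Conjugating by the diagonal scaling then gives~\eqref{HNchGram}, with $\mu_0=N$ and $\mu_k=\frac N2 c_k^2$.

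Finally, for nonsingularity we must check that every $\mu_k>0$. We have $a_k=(1/2)_k/k!>0$. Moreover, $0<k\rho<(N-1)\pi/(2N)<\pi$, so $\sin(k\rho)>0$. Thus $\det(H_N^\top H_N)>0$, and $H_N$ is nonsingular.

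The computation is routine. The only points needing care are the orientation in the change of variables, which makes $m_i=2\rho$ positive, and checking that the index ranges fall within the hypotheses of the lemma. The latter is the mild obstacle: the mixed constant–cosine entries require~\eqref{cosineSumZero} with $m=k\le N-1$, and this is covered.
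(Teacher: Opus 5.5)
Your proposal is correct and follows the paper's proof essentially step by step. You use the substitution $x=\cos\theta$ to get $m_i=2\rho$ and $[H_N]_{i,k+1}=a_k\frac{\sin(k\rho)}{k\rho}\cos(k\tau_i)$, then Lemma~\ref{lemmaCosineDiscreteOrthogonality} for the Gram entries and $\sin(k\rho)>0$ for positivity; the only cosmetic difference is that you package this as the factorization $H_N=C_N\operatorname{diag}(1,c_1,\ldots,c_{N-1})$ instead of computing the Gram entries one by one.
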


\begin{proof}
We first compute the entries of $H_N$. Using the change of
variables $x=\cos\theta$, we have
\[
dx=-\sin\theta d\theta,
\quad
\omega_{-1/2,-1/2}(\cos\theta)=\frac{1}{\sin\theta},
\quad 0<\theta<\pi.
\]
Hence
\begin{equation*}
    m_i
=
\int_{\cos\left(\tau_i+\rho\right)}^{\cos\left(\tau_i-\rho\right)}
\omega_{-1/2,-1/2}(x)dx =
\int_{\tau_i+\rho}^{\tau_i-\rho}(-d\theta)
=
2\rho.
\end{equation*}
Since
\[
P_0^{(-1/2,-1/2)}=1,
\]
the first column of $H_N$ is constant, that is
\begin{equation}\label{HN11}
\left[H_N\right]_{i,1}=1,
\quad i=1,\ldots,N.
\end{equation}
For $k=1,\ldots,N-1$, using~\eqref{ChebJacobiNormalization} together with the identity
\[
\sin\left(k\left(\tau_i+\rho\right)\right)-\sin\left(k\left(\tau_i-\rho\right)\right)=2\sin(k\rho)\cos\left(k\tau_i\right),
\]
we get
\begin{eqnarray}
\left[H_N\right]_{i,k+1} &=& \frac{1}{2\rho}
\int_{\cos\left(\tau_i+\rho\right)}^{\cos\left(\tau_i-\rho\right)}
P_k^{(-1/2,-1/2)}(x)\omega_{-1/2,-1/2}(x)dx \notag\\
&=&
\frac{a_k}{2\rho}
\int_{\tau_i-\rho}^{\tau_i+\rho}\cos(k\theta)d\theta \notag\\
&=&
\frac{a_k}{2k\rho}
\left[
\sin\left(k\left(\tau_i+\rho\right)\right)-\sin\left(k\left(\tau_i-\rho\right)\right)
\right] \notag\\
&=&
a_k\frac{\sin(k\rho)}{k\rho}\cos\left(k\tau_i\right).
\label{HNchEntries}
\end{eqnarray}
We now compute the entries of the Gram matrix $\left(H_N\right)^{\top} H_N$. From~\eqref{HN11}, we have
\[
\left[\left(H_N\right)^{\top} H_N\right]_{1,1}
=\sum_{i=1}^N \left[\left(H_N\right)^{\top}\right]_{1,i}\left[H_N\right]_{i,1}=
\sum_{i=1}^N1=N.
\]
For any $k=1,\ldots,N-1$, combining~\eqref{HNchEntries} with Lemma~\ref{lemmaCosineDiscreteOrthogonality},
we obtain
\begin{equation*}
\left[\left(H_N\right)^{\top} H_N\right]_{1,k+1}
=\sum_{i=1}^N \left[\left(H_N\right)^{\top}\right]_{1,i}\left[H_N\right]_{i,k+1}=
a_k\frac{\sin(k\rho)}{k\rho}
\sum_{i=1}^N\cos\left(k\tau_i\right)
=
0.
\end{equation*}
Similarly, for any $k,\ell=1,\ldots,N-1$, we get
\begin{eqnarray*}
\left[\left(H_N\right)^{\top} H_N\right]_{k+1,\ell+1}
&=&
a_k a_\ell
\frac{\sin(k\rho)}{k\rho}
\frac{\sin(\ell\rho)}{\ell\rho}
\sum_{i=1}^N\cos\left(k\tau_i\right)\cos\left(\ell\tau_i\right)\\&=&
\begin{cases}
0, & k\neq \ell,\\
\frac{N}{2}
\left[
a_k\frac{\sin(k\rho)}{k\rho}
\right]^2, & k=\ell,
\end{cases}
\end{eqnarray*}
which proves~\eqref{HNchGram}. It remains to show that all the diagonal
entries are positive. Since
\[
\mu_0=N>0,
\]
we only need to consider $k=1,\ldots,N-1$. By~\eqref{rhodef}, for such
values of $k$, we have
\[
0<k\rho
<
\frac{k\pi}{2N}
\leq
\frac{(N-1)\pi}{2N}
<
\frac{\pi}{2}.
\]
Thus $\sin(k\rho)>0$, and therefore
\[
\mu_k>0,
\quad
k=1,\ldots,N-1.
\]
Hence $\left(H_N\right)^{\top}H_N$ is positive definite, and consequently $H_N$
is nonsingular.
\end{proof}

\begin{remark}
For $N=1$, we have $H_1=[1]$, and hence $\kappa_2\left(H_1\right)=1$. Assume now
that $N\geq2$. By Theorem~\ref{thmChebFirstKindDiagonalGram}, the
singular values of $H_N$, up to ordering, are
\begin{equation*}
\sigma_0\left(H_N\right)=\sqrt{N},
\quad
\sigma_k\left(H_N\right)
=
\sqrt{\frac{N}{2}}
a_k\frac{\sin(k\rho)}{k\rho},
\quad
k=1,\ldots,N-1.
\end{equation*}
For any $k\geq0$, using~\eqref{ChebJacobiNormalization}, we have
\begin{equation*}
\frac{a_{k+1}}{a_k}
=
\frac{(1/2)_{k+1}}{(k+1)!}
\frac{k!}{(1/2)_k}=
\frac{k+1/2}{k+1}
<1.
\end{equation*}
Hence, the sequence $\left\{a_k\right\}_{k\geq0}$ is strictly decreasing. Moreover, by~\eqref{rhodef}, for $k=1,\ldots,N-1$, we have
\[
0<k\rho<\frac{\pi}{2}.
\]
The function
\[
x\mapsto\frac{\sin x}{x}
\]
is positive and strictly decreasing on $(0,\pi)$. Since the sequence
$\left\{a_k\right\}_{k\geq0}$ is also positive and strictly decreasing, it follows
that
\[
a_k\frac{\sin(k\rho)}{k\rho},
\quad
k=1,\ldots,N-1,
\]
is strictly decreasing. Moreover, $a_1=1/2$ and
$\sin(\rho)/\rho<1$, and therefore
\[
\sigma_1\left(H_N\right)
=
\sqrt{\frac{N}{2}}
\frac{\sin(\rho)}{2\rho}
<
\sqrt{N}
=
\sigma_0\left(H_N\right).
\]
Therefore, for $N\geq2$, we get
\begin{equation*}
\sigma_{\min}\left(H_N\right)
=
\sqrt{\frac{N}{2}}
a_{N-1}
\frac{\sin((N-1)\rho)}{(N-1)\rho}
\end{equation*}
and
\begin{equation*}
\sigma_{\max}\left(H_N\right)=\sqrt{N}.
\end{equation*}
Consequently,
\begin{equation*}
\kappa_2\left(H_N\right)
=
\frac{\sigma_{\max}\left(H_N\right)}
{\sigma_{\min}\left(H_N\right)}
=
\frac{\sqrt{2}}{a_{N-1}}
\frac{(N-1)\rho}{\sin((N-1)\rho)}.
\end{equation*}
Since the function
\[
x\mapsto\frac{x}{\sin x}
\]
is strictly increasing on $(0,\pi)$, the map
\[
\rho\mapsto\kappa_2\left(H_N\right)
\]
is strictly increasing on $(0,\pi/(2N))$. Thus, within the admissible
range~\eqref{rhodef}, smaller angular cells yield better conditioning.
In particular,
\begin{equation*}
\inf_{0<\rho<\frac{\pi}{2N}}\kappa_2\left(H_N\right)
=
\lim_{\rho\to0^+}\kappa_2\left(H_N\right)
=
\frac{\sqrt{2}}{a_{N-1}}.
\end{equation*}
\end{remark}

We now examine the role of the Jacobi parameters in the construction above.
Throughout this part, we consider the Jacobi weight
\[
\omega_{\alpha,\beta}(x)
=
(1-x)^\alpha(1+x)^\beta,
\quad
\alpha,\beta>-1.
\]
Let $\tau_i$ and $s_i$ be defined as in~\eqref{rhodef}
and~\eqref{constantAngularCells}, respectively. We define
\[
m_i^{(\alpha,\beta)}
=
\int_{\cos\left(\tau_i+\rho\right)}^{\cos\left(\tau_i-\rho\right)}
\omega_{\alpha,\beta}(x)dx
\]
and
\[
\left[H_N^{(\alpha,\beta)}\right]_{i,k+1}
=
\frac{1}{m_i^{(\alpha,\beta)}}
\int_{\cos\left(\tau_i+\rho\right)}^{\cos\left(\tau_i-\rho\right)}
P_k^{(\alpha,\beta)}(x)\omega_{\alpha,\beta}(x)dx,
\]
for $i=1,\ldots,N$ and $k=0,\ldots,N-1$. The next result
characterizes the Jacobi parameters for which the diagonal structure
of the Gram matrix persists for every admissible value of $\rho$.

\begin{theorem}
Let $\alpha,\beta>-1$ and let $N\geq3$. Then the following statements are
equivalent:
\begin{enumerate}
\item For any $\rho\in\left(0,\frac{\pi}{2N}\right)$, the Gram matrix
\begin{equation}
\label{grammat}
\left(H_N^{(\alpha,\beta)}\right)^{\top}H_N^{(\alpha,\beta)}
\end{equation}
is diagonal.
\item $\alpha=\beta=-\frac{1}{2}$.
\end{enumerate}
\end{theorem}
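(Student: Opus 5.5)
The implication $2\Rightarrow1$ needs no new work. For $\alpha=\beta=-\frac12$ the matrix $H_N^{(-1/2,-1/2)}$ coincides with the matrix $H_N$ in~\eqref{HNchdef}, and Theorem~\ref{thmChebFirstKindDiagonalGram} shows that its Gram matrix is diagonal for every $\rho\in(0,\frac{\pi}{2N})$. The substance is the converse $1\Rightarrow2$. There I will not use all entries of the Gram matrix, only its first row, and a limiting argument as $\rho\to0^+$.

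\emph{Step 1: the first row.} Since $P_0^{(\alpha,\beta)}=1$ and each row is normalized by $m_i^{(\alpha,\beta)}$, the first column of $H_N^{(\alpha,\beta)}$ is identically $1$. Hence, for $k=1,\dots,N-1$,
\[
\left[\left(H_N^{(\alpha,\beta)}\right)^{\top}H_N^{(\alpha,\beta)}\right]_{1,k+1}
=\sum_{i=1}^N \left[H_N^{(\alpha,\beta)}\right]_{i,k+1}.
\]
Each summand is the $\omega_{\alpha,\beta}$-weighted average of $P_k^{(\alpha,\beta)}$ over $s_i$. Diagonality for all admissible $\rho$ therefore forces this sum to vanish for every $\rho$.

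\emph{Step 2: the limit $\rho\to0^+$.} Each cell $s_i$ shrinks to the point $\cos\tau_i\in(-1,1)$. On a fixed compact neighbourhood of that point the weight $\omega_{\alpha,\beta}$ is continuous and positive, and $P_k^{(\alpha,\beta)}$ is continuous. By the mean value theorem for weighted integrals, the weighted average therefore tends to $P_k^{(\alpha,\beta)}(\cos\tau_i)$. This gives the necessary conditions
\[
\sum_{i=1}^N P_k^{(\alpha,\beta)}(\cos\tau_i)=0,\qquad k=1,\dots,N-1.
\]

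\emph{Step 3: $k=1$ gives $\alpha=\beta$.} We have $P_1^{(\alpha,\beta)}(x)=\frac{\alpha-\beta}{2}+\frac{\alpha+\beta+2}{2}x$. By~\eqref{cosineSumZero} with $m=1$, $\sum_i\cos\tau_i=0$. The condition for $k=1$ therefore reduces to $N\frac{\alpha-\beta}{2}=0$, that is, $\alpha=\beta$.

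\emph{Step 4: $k=2$ gives $\alpha=-\frac12$.} This step is available because $N\ge3$, which guarantees that the column of $P_2$ exists. With $\alpha=\beta$, the polynomial $P_2^{(\alpha,\alpha)}$ is a nonzero multiple of a Gegenbauer polynomial. Being even and orthogonal to constants, it has the form $c_2\left(x^2-\gamma\right)$ with $c_2\ne0$ and
\[
\gamma=\frac{\int_{-1}^1x^2(1-x^2)^\alpha\,dx}{\int_{-1}^1(1-x^2)^\alpha\,dx}=\frac{1}{2\alpha+3},
\]
which follows from a Beta-function ratio. Using $\cos^2\tau_i=\frac{1+\cos 2\tau_i}{2}$ and~\eqref{cosineSumZero} with $m=2\le 2N-1$, we get $\sum_i\cos^2\tau_i=\frac N2$. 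The condition for $k=2$ becomes
\[
N\left(\frac12-\frac1{2\alpha+3}\right)=0,
\]
hence $2\alpha+3=2$, so $\alpha=\beta=-\frac12$.

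The main point requiring care is Step 2. One must make sure the limit is legitimate: the cells stay inside $(-1,1)$ with endpoints bounded away from $\pm1$ uniformly as $\rho\to0^+$, and the weight is positive there, so $m_i^{(\alpha,\beta)}>0$ and the weighted averages converge. The remaining care is the explicit constant $\gamma$ in Step 4, which I would double-check via the Beta-integral ratio $\frac{B(3/2,\alpha+1)}{B(1/2,\alpha+1)}=\frac{1}{2\alpha+3}$.
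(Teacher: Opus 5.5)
Your proposal is correct and follows the paper's proof essentially step by step: the first column of $H_N^{(\alpha,\beta)}$ is constant, the Gram entries $(1,2)$ and $(1,3)$ are column sums, letting $\rho\to0^+$ turns them into $\sum_i P_k^{(\alpha,\beta)}(\cos\tau_i)=0$ for $k=1,2$, and the discrete cosine sums then force $\alpha=\beta$ and then $\alpha=-\frac12$. The only differences are cosmetic: you obtain $P_2^{(\alpha,\alpha)}\propto x^2-\frac{1}{2\alpha+3}$ from parity, orthogonality to constants and a Beta-integral ratio rather than quoting the explicit formula, and you justify the limit with the weighted mean value theorem rather than the paper's triangle-inequality estimate.
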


\begin{proof}
If $\alpha=\beta=-1/2$, then the diagonality statement follows from
Theorem~\ref{thmChebFirstKindDiagonalGram}. It remains to prove the other
implication.

Assume that the Gram matrix~\eqref{grammat} is diagonal for any
\[
0<\rho<\frac{\pi}{2N}.
\]
Using the change of variables $x=\cos\theta$, we obtain
\[
m_i^{(\alpha,\beta)}
=
\int_{\tau_i-\rho}^{\tau_i+\rho}
\omega_{\alpha,\beta}(\cos\theta)\sin\theta d\theta
\]
and
\begin{eqnarray}
\left[H_N^{(\alpha,\beta)}\right]_{i,k+1}&=&\frac{1}{m_i^{(\alpha,\beta)}}
\int_{\cos\left(\tau_i+\rho\right)}^{\cos\left(\tau_i-\rho\right)}
P_k^{(\alpha,\beta)}(x)\omega_{\alpha,\beta}(x)dx \notag\\
&=&
\frac{1}{m_i^{(\alpha,\beta)}}
\int_{\tau_i-\rho}^{\tau_i+\rho}
P_k^{(\alpha,\beta)}(\cos\theta)
\omega_{\alpha,\beta}(\cos\theta)\sin\theta d\theta, \label{19HN}
\end{eqnarray}
for $i=1,\ldots,N$ and $k=0,\ldots,N-1$. In particular, since
$P_0^{(\alpha,\beta)}=1$, we have
\begin{equation}
\label{HNi1}
\left[H_N^{(\alpha,\beta)}\right]_{i,1}=1,
\quad
i=1,\ldots,N.
\end{equation}
For fixed $i=1,\ldots,N$ and $k=0,\ldots,N-1$, subtracting $P_k^{(\alpha,\beta)}\left(\cos\tau_i\right)$ from both sides of~\eqref{19HN} and using the definition of $m_i^{(\alpha,\beta)}$, we obtain
\begin{eqnarray*}
&&\left[H_N^{(\alpha,\beta)}\right]_{i,k+1}
-
P_k^{(\alpha,\beta)}\left(\cos\tau_i\right) \\
&=&\frac{1}{m_i^{(\alpha,\beta)}}
\int_{\tau_i-\rho}^{\tau_i+\rho}
\left[
P_k^{(\alpha,\beta)}(\cos\theta)
-
P_k^{(\alpha,\beta)}\left(\cos\tau_i\right)
\right]
\omega_{\alpha,\beta}(\cos\theta)\sin\theta d\theta.
\end{eqnarray*}
Therefore, by the triangle inequality and the positivity of the weight,
we have
\begin{eqnarray*}
&&\left|
\left[H_N^{(\alpha,\beta)}\right]_{i,k+1}
-
P_k^{(\alpha,\beta)}\left(\cos\tau_i\right)
\right|\\&=& \frac{1}{m_i^{(\alpha,\beta)}}\left|
\int_{\tau_i-\rho}^{\tau_i+\rho}
\left[
P_k^{(\alpha,\beta)}(\cos\theta)
-
P_k^{(\alpha,\beta)}\left(\cos\tau_i\right)
\right]
\omega_{\alpha,\beta}(\cos\theta)\sin\theta d\theta\right| \\
&\le& \frac{1}{m_i^{(\alpha,\beta)}}
\int_{\tau_i-\rho}^{\tau_i+\rho}
\left|
P_k^{(\alpha,\beta)}(\cos\theta)
-
P_k^{(\alpha,\beta)}\left(\cos\tau_i\right)
\right|
\omega_{\alpha,\beta}(\cos\theta)\sin\theta d\theta\\
&\le& \max_{\theta\in[\tau_i-\rho,\tau_i+\rho]}
\left|
P_k^{(\alpha,\beta)}(\cos\theta)
-
P_k^{(\alpha,\beta)}\left(\cos\tau_i\right)
\right|.
\end{eqnarray*}
Since the function
\[
\theta\mapsto P_k^{(\alpha,\beta)}(\cos\theta)
\]
is continuous at $\theta=\tau_i$, it follows that
\begin{equation}\label{JacobiHistLimit}
\lim_{\rho\to0^+}
\left[H_N^{(\alpha,\beta)}\right]_{i,k+1}
=
P_k^{(\alpha,\beta)}\left(\cos\tau_i\right).
\end{equation}
Using~\eqref{HNi1} and the assumption that the Gram matrix is diagonal,
we have
\[
0
=
\left[
\left(H_N^{(\alpha,\beta)}\right)^{\top}
H_N^{(\alpha,\beta)}
\right]_{1,2}
=
\sum_{i=1}^N \left[H_N^{(\alpha,\beta)}\right]_{i,2}.
\]
Since this identity holds for any $\rho\in(0,\pi/(2N))$, passing to the
limit as $\rho\to0^+$ and using~\eqref{JacobiHistLimit}, we get
\begin{equation}\label{0p1}
    0
=\lim_{\rho\to 0^+} \sum_{i=1}^N \left[H_N^{(\alpha,\beta)}\right]_{i,2}=
\sum_{i=1}^N P_1^{(\alpha,\beta)}\left(\cos\tau_i\right).
\end{equation}
By the classical Jacobi normalization~\cite{Milovanovic:1997:OPS}, we have
\begin{equation}\label{JacPol1}
    P_1^{(\alpha,\beta)}(x)
=
\frac{1}{2}\left[(\alpha+\beta+2)x+\alpha-\beta\right].
\end{equation}
Combining~\eqref{0p1} and~\eqref{JacPol1}, we obtain
\begin{equation}\label{newnew}
    0
=
\frac{\alpha+\beta+2}{2}
\sum_{i=1}^N\cos\tau_i
+
\frac{N}{2}(\alpha-\beta).
\end{equation}
Using~\eqref{cosineSumZero}, applied with $m=1$,
\[
\sum_{i=1}^N\cos\tau_i=0.
\]
Therefore~\eqref{newnew} reduces to
\[
\alpha-\beta=0,
\]
and hence $\alpha=\beta$.
Set
\[
\gamma:=\alpha=\beta.
\]
Since $N\ge3$, we can compute
\[
0
=
\left[
\left(H_N^{(\gamma,\gamma)}\right)^{\top}
H_N^{(\gamma,\gamma)}
\right]_{1,3}
=
\sum_{i=1}^N \left[H_N^{(\gamma,\gamma)}\right]_{i,3}.
\]
Passing again to the limit $\rho\to0^+$ and using~\eqref{JacobiHistLimit}, we obtain
\begin{equation*}
0=\lim_{\rho\to0^+}\sum_{i=1}^N \left[H_N^{(\gamma,\gamma)}\right]_{i,3}
=
\sum_{i=1}^N P_2^{(\gamma,\gamma)}\left(\cos\tau_i\right).
\end{equation*}
With the classical Jacobi normalization~\cite{Milovanovic:1997:OPS}, we have
\[
P_2^{(\gamma,\gamma)}(x)
=
\frac{\gamma+2}{4}
\left[(2\gamma+3)x^2-1\right].
\]
Hence
\begin{equation}
    \label{newnewnew}
    0
=
\frac{\gamma+2}{4}
\left[
(2\gamma+3)\sum_{i=1}^N\cos^2\tau_i
-
N
\right].
\end{equation}
Using~\eqref{cosineDiscreteOrthogonality}, with $k=\ell=1$, we have
\[
\sum_{i=1}^N\cos^2\tau_i=\frac{N}{2}.
\]
Therefore~\eqref{newnewnew} becomes
\[
0
=
\frac{ \gamma+2}{4}
\left[
(2\gamma+3)\frac{N}{2}-N
\right]
=
\frac{N(\gamma+2)(2\gamma+1)}{8}.
\]
Since $\gamma>-1$, we have
\[
2\gamma+1=0,
\]
that is
\[
\gamma=-\frac{1}{2}.
\]
Therefore $\alpha=\beta=-1/2$.
\end{proof}

\begin{remark}
The exactly diagonal configuration above also gives a construction in dimension
$d\geq2$ on Cartesian product cells. Let
\[
\boldsymbol{N}=(N_1,\ldots,N_d),
\quad
N_r\geq1,
\quad
r=1,\ldots,d,
\]
and, for each $r=1,\ldots,d$, let
\[
0<\rho_r<\frac{\pi}{2N_r}.
\]
We consider
\[
\Omega=[-1,1]^d
\]
with the product weight
\[
\omega^{(d)}(\boldsymbol{x})
=
\prod_{r=1}^d
\omega_{-1/2,-1/2}(x_r),
\quad
\boldsymbol{x}=(x_1,\ldots,x_d).
\]
For each coordinate $r$, set
\[
\tau_{i_r}^{(r)}
=
\frac{(2i_r-1)\pi}{2N_r},
\quad
i_r=1,\ldots,N_r,
\]
and define
\[
s_{i_r}^{(r)}
=
\left[
\cos\left(\tau_{i_r}^{(r)}+\rho_r\right),
\cos\left(\tau_{i_r}^{(r)}-\rho_r\right)
\right].
\]
We then consider the cells
\[
s_{\boldsymbol{i}}
=
s_{i_1}^{(1)}
\times\cdots\times
s_{i_d}^{(d)},
\quad
\boldsymbol{i}=(i_1,\ldots,i_d),
\]
and the polynomial basis
\[
P_{\boldsymbol{k}}(\boldsymbol{x})
=
\prod_{r=1}^d
P_{k_r}^{(-1/2,-1/2)}(x_r),
\quad
0\leq k_r\leq N_r-1,
\quad
r=1,\ldots,d.
\]
Since the cells, the weight, and the polynomial basis have a product form, we have
\[
H_{\boldsymbol{N}}^{(d)}
=
H_{N_1}\otimes\cdots\otimes H_{N_d},
\]
where $H_{N_r}$ is the one-dimensional normalized histopolation matrix
corresponding to $N_r$ and $\rho_r$. Therefore, by
Theorem~\ref{thmChebFirstKindDiagonalGram}, we have
\begin{eqnarray*}
\left(H_{\boldsymbol{N}}^{(d)}\right)^{\top}
H_{\boldsymbol{N}}^{(d)}
&=&
\bigotimes_{r=1}^d
\left(H_{N_r}^{\top}H_{N_r}\right)\\
&=&
\bigotimes_{r=1}^d
\operatorname{diag}
\left(
\mu_0^{(r)},\ldots,\mu_{N_r-1}^{(r)}
\right).
\end{eqnarray*}
Hence the Gram matrix is diagonal, with diagonal entries
\[
\prod_{r=1}^d\mu_{k_r}^{(r)},
\quad
0\leq k_r\leq N_r-1,
\quad
r=1,\ldots,d.
\]
Consequently, the singular values of $H_{\boldsymbol{N}}^{(d)}$, up to
ordering, are
\[
\prod_{r=1}^d\sqrt{\mu_{k_r}^{(r)}},
\quad
0\leq k_r\leq N_r-1,
\quad
r=1,\ldots,d,
\]
and
\[
\kappa_2\left(H_{\boldsymbol{N}}^{(d)}\right)
=
\prod_{r=1}^d
\kappa_2\left(H_{N_r}\right).
\]
Thus the exact diagonality and the explicit formulas for the singular values
extend to Cartesian product cells in any fixed dimension. The case of cells
which are not Cartesian products is not considered here.
\end{remark}

\section{A general diagonalization criterion}
\label{sec:diagonalizationCriterion}

In this section, we present a general framework for constructing
histopolation matrices with diagonal Gram matrices.

Let $N\geq2$ and let
\[
\Xi_N=\left\{\xi_1,\ldots,\xi_N\right\}
\]
be a set of distinct nodes. We consider a positive diagonal matrix
\begin{equation}
\label{matWn}
W_N=\operatorname{diag}\left(w_1,\ldots,w_N\right),
\quad
w_i>0,
\quad
i=1,\ldots,N,
\end{equation}
and real-valued functions
\[
\psi_0,\ldots,\psi_{N-1}
\]
defined on $\Xi_N$. We introduce the sampling matrix
\[
\Psi_N
=
\left[
\psi_k\left(\xi_i\right)
\right]_{\substack{i=1,\ldots,N\\ k=0,\ldots,N-1}}.
\]
We assume that the columns of $\Psi_N$ are orthogonal with respect to
the inner product induced by $W_N$, namely
\begin{equation}
\label{eq:generalWeightedOrthogonality}
\Psi_N^{\top}W_N\Psi_N
=
\Delta_N,
\end{equation}
where
\[
\Delta_N
=
\operatorname{diag}\left(\nu_0,\ldots,\nu_{N-1}\right),
\quad
\nu_k>0,
\quad
k=0,\ldots,N-1.
\]
Classical discrete cosine and sine transforms provide a natural source
of sampling matrices satisfying~\eqref{eq:generalWeightedOrthogonality}.
In particular, the four discrete cosine transforms of types I--IV and
the four discrete sine transforms of types I--IV, discussed in
\cite[Tables~I--III]{Benedetto:2000:OMM}, correspond to different
choices of sampling grids and to integer or half-integer shifts in the
node and frequency indices. With suitable normalization factors, the
associated transform matrices are real orthogonal. The criterion
developed below shows how this discrete orthogonality can be used to
construct histopolation matrices whose columns are nonzero scalar
multiples of the corresponding columns of the sampling matrix.

For $\boldsymbol u,\boldsymbol v\in\mathbb R^N$, we set
\[
\left\langle
\boldsymbol u,\boldsymbol v
\right\rangle_{W_N}
:=
\boldsymbol u^{\top}W_N\boldsymbol v.
\]
Equivalently, if
\begin{equation*}
\boldsymbol u_k
=
\left(
\psi_k\left(\xi_1\right),\ldots,\psi_k\left(\xi_N\right)
\right)^{\top},
\quad
k=0,\ldots,N-1,
\end{equation*}
then
\begin{equation}
\label{diagwq}
\left\langle
\boldsymbol u_j,\boldsymbol u_k
\right\rangle_{W_N}
=
\begin{cases}
0, & j\neq k,\\
\nu_k, & j=k,
\end{cases}
\quad
j,k=0,\ldots,N-1.
\end{equation}
Since $\Delta_N$ is positive diagonal, the matrix $\Psi_N$ is nonsingular.

Let $\omega$ be a nonnegative measurable weight on $(-1,1)$. We consider
measurable cells
\[
s_1,\ldots,s_N\subset(-1,1)
\]
such that
\[
0<
\int_{s_i}\omega(x)dx
<\infty,
\quad
i=1,\ldots,N.
\]
Let
\[
P_0,\ldots,P_{N-1}
\]
be real-valued functions that are integrable with respect to $\omega$
on each cell. We define the corresponding unnormalized histopolation matrix
$M_N\in\mathbb R^{N\times N}$ by
\begin{equation*}
\left[M_N\right]_{i,k+1}
=
\int_{s_i}P_k(x)\omega(x)dx,
\quad
i=1,\ldots,N,
\quad
k=0,\ldots,N-1.
\end{equation*}

The following proposition gives a sufficient condition for
$M_N^{\top}W_NM_N$ to be diagonal.

\begin{proposition}
\label{prop:columnwiseTransformMatching}
Assume that there exist nonzero real numbers $\gamma_0,\ldots,\gamma_{N-1}$ such that
\begin{equation}
\label{eq:columnwiseTransformMatching}
\int_{s_i}P_k(x)\omega(x)dx
=
\gamma_k\psi_k\left(\xi_i\right),
\quad
i=1,\ldots,N,
\quad
k=0,\ldots,N-1.
\end{equation}
Then
\begin{equation}
\label{eq:columnwiseDiagonalGram}
M_N^{\top}W_NM_N
=
\operatorname{diag}
\left(
\nu_0\gamma_0^2,\ldots,\nu_{N-1}\gamma_{N-1}^2
\right).
\end{equation}
In particular, $M_N$ is nonsingular.
\end{proposition}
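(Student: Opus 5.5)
The plan is to rewrite the hypothesis~\eqref{eq:columnwiseTransformMatching} as a single matrix identity. Once that is done, the Gram matrix can be read off directly from the discrete weighted orthogonality~\eqref{eq:generalWeightedOrthogonality}.

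First, set
\[
\Gamma_N=\operatorname{diag}\left(\gamma_0,\ldots,\gamma_{N-1}\right).
\]
The entrywise hypothesis says that the $(k+1)$-th column of $M_N$ equals $\gamma_k$ times the $(k+1)$-th column of $\Psi_N$, namely $\gamma_k\boldsymbol u_k$. Scaling columns corresponds to right multiplication by a diagonal matrix, so we get
\[
M_N=\Psi_N\Gamma_N.
\]
Since $\Gamma_N$ is diagonal, $\Gamma_N^{\top}=\Gamma_N$. Using~\eqref{eq:generalWeightedOrthogonality}, it follows that
\[
M_N^{\top}W_NM_N=\Gamma_N\Psi_N^{\top}W_N\Psi_N\Gamma_N=\Gamma_N\Delta_N\Gamma_N .
\]
A product of diagonal matrices is diagonal, with entries $\gamma_k\nu_k\gamma_k=\nu_k\gamma_k^2$. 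This gives~\eqref{eq:columnwiseDiagonalGram}. Equivalently, one can verify the identity entrywise via~\eqref{diagwq}: the $(j+1,k+1)$ entry is $\gamma_j\gamma_k\langle\boldsymbol u_j,\boldsymbol u_k\rangle_{W_N}$.

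For nonsingularity, every diagonal entry $\nu_k\gamma_k^2$ is strictly positive, because $\nu_k>0$ and $\gamma_k\neq0$. Hence $M_N^{\top}W_NM_N$ is nonsingular, which gives
\[
\det(M_N)^2\det(W_N)\neq0,
\]
so $\det(M_N)\neq0$. Alternatively, $M_N=\Psi_N\Gamma_N$ is a product of two nonsingular matrices, since $\Psi_N$ was already observed to be nonsingular.

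There is no real obstacle here. The only point needing care is the bookkeeping: the column index $k+1$ must be matched with the function index $k$, and the factors $\gamma_k$ must appear symmetrically on both sides of $\Delta_N$.
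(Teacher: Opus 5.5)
Your proposal is correct and follows essentially the same route as the paper: you factor $M_N=\Psi_N\Gamma_N$, sandwich the orthogonality relation $\Psi_N^{\top}W_N\Psi_N=\Delta_N$ to get $\Gamma_N\Delta_N\Gamma_N$, and deduce nonsingularity from the positivity of the diagonal entries $\nu_k\gamma_k^2$. Your determinant argument and the alternative via the nonsingularity of $\Psi_N$ are both valid ways to finish, while the paper simply invokes positive definiteness.
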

\begin{proof}
By~\eqref{eq:columnwiseTransformMatching}, we have
\[
M_N
=
\Psi_N\Gamma_N,
\quad
\Gamma_N
=
\operatorname{diag}\left(\gamma_0,\ldots,\gamma_{N-1}\right).
\]
Using~\eqref{eq:generalWeightedOrthogonality}, we obtain
\begin{eqnarray*}
M_N^{\top}W_NM_N
&=&
\left(\Psi_N\Gamma_N\right)^{\top}W_N\left(\Psi_N\Gamma_N\right)\\
&=&
\Gamma_N^{\top}\left(\Psi_N^{\top}W_N\Psi_N\right)\Gamma_N\\
&=&
\Gamma_N^{\top}\Delta_N\Gamma_N\\
&=&
\operatorname{diag}
\left(
\nu_0\gamma_0^2,\ldots,\nu_{N-1}\gamma_{N-1}^2
\right).
\end{eqnarray*}
This proves~\eqref{eq:columnwiseDiagonalGram}. Since $\nu_k>0$ and
$\gamma_k\neq0$ for all $k=0,\ldots,N-1$, the matrix
$M_N^{\top}W_NM_N$ is positive definite. Consequently, $M_N$ is
nonsingular.
\end{proof}

In the remainder of this section, assume that
\[
P_0(x)=1.
\]
Fix an index
\[
m\in\{1,\ldots,N-1\}.
\]
Let
\[
\boldsymbol c_0
=
\left(
\int_{s_1}\omega(x)dx,\ldots,
\int_{s_N}\omega(x)dx
\right)^{\top},
\]
and, for $k=1,\ldots,N-1$, let
\[
\boldsymbol c_k
=
\left(
\int_{s_1}P_k(x)\omega(x)dx,\ldots,
\int_{s_N}P_k(x)\omega(x)dx
\right)^{\top}.
\]
Since $W_N$ is positive definite and $\boldsymbol c_0\neq\boldsymbol 0$,
we may define
\begin{equation}
\label{eq:generalLambda}
\lambda_m=\frac{\left\langle
\boldsymbol c_0,\boldsymbol c_m\right\rangle_{W_N}}{\left\langle
\boldsymbol c_0,\boldsymbol c_0\right\rangle_{W_N}}
=
\frac{
\boldsymbol c_0^{\top}W_N\boldsymbol c_m
}
{
\boldsymbol c_0^{\top}W_N\boldsymbol c_0
}.
\end{equation}
We consider the modified functions
\[
\widehat P_0(x)=1,
\quad
\widehat P_m(x)=P_m(x)-\lambda_m,
\]
and
\[
\widehat P_k(x)=P_k(x),
\quad
k\in\{1,\ldots,N-1\}\setminus\{m\}.
\]
Let $\hat{M}_N\in\mathbb R^{N\times N}$ be the corresponding
unnormalized histopolation matrix, namely
\begin{equation}
\label{eq:modifiedMomentMatrix}
\left[\hat{M}_N\right]_{i,k+1}
=
\int_{s_i}\widehat P_k(x)\omega(x)dx,
\quad
i=1,\ldots,N,
\quad
k=0,\ldots,N-1.
\end{equation}
The columns of $\hat{M}_N$ are
\begin{equation}
\label{newbasis1}
\widehat{\boldsymbol c}_0
=
\boldsymbol c_0,
\quad
\widehat{\boldsymbol c}_m
=
\boldsymbol c_m
-
\lambda_m\boldsymbol c_0,
\end{equation}
and
\begin{equation}
\label{akequation}
\widehat{\boldsymbol c}_k
=
\boldsymbol c_k,
\quad
k\in\{1,\ldots,N-1\}\setminus\{m\}.
\end{equation}
Suppose that there exist real numbers
\[
\eta_0\neq0,
\quad
\eta_m,
\quad
\widehat a_k\neq0,
\quad
k=1,\ldots,N-1,
\]
such that
\begin{equation}
\label{eq:generalConstantColumnReduction}
\boldsymbol c_0
=
\eta_0\boldsymbol u_0+\eta_m\boldsymbol u_m
\end{equation}
and
\begin{equation}
\label{eq:generalNonconstantColumnReduction}
\boldsymbol c_k
=
\widehat a_k\boldsymbol u_k,
\quad
k=1,\ldots,N-1.
\end{equation}
The next theorem shows that correcting a single column in this way yields
a diagonal weighted Gram matrix.

\begin{theorem}
\label{thm:singleColumnCorrection}
Let $\hat{M}_N$ and $W_N$ be defined
in~\eqref{eq:modifiedMomentMatrix} and~\eqref{matWn}, respectively.
Assume that~\eqref{eq:generalConstantColumnReduction}
and~\eqref{eq:generalNonconstantColumnReduction} hold. Then
\begin{equation}
\label{eq:generalOneModeDiagonalGram}
\hat{M}_N^{\top}W_N\hat{M}_N
=
\operatorname{diag}
\left(
d_0,\ldots,d_{N-1}
\right),
\end{equation}
where
\begin{equation*}
d_0
=
\mu_0
=
\eta_0^2\nu_0+\eta_m^2\nu_m,
\quad
d_m
=
\widehat\mu_m
=
\frac{
\widehat a_m^2\nu_0\nu_m\eta_0^2
}
{
\eta_0^2\nu_0+\eta_m^2\nu_m
},
\end{equation*}
and
\begin{equation*}
d_k
=
\mu_k
=
\widehat a_k^2\nu_k,
\quad
k\in\{1,\ldots,N-1\}\setminus\{m\}.
\end{equation*}
In particular, $\hat{M}_N$ is nonsingular.
\end{theorem}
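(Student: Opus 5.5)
The plan is a direct computation of all pairwise $W_N$-inner products of the columns $\widehat{\boldsymbol c}_0,\ldots,\widehat{\boldsymbol c}_{N-1}$ of $\hat M_N$. We use~\eqref{diagwq} throughout, i.e.\ the $W_N$-orthogonality of $\boldsymbol u_0,\ldots,\boldsymbol u_{N-1}$ with $\langle\boldsymbol u_k,\boldsymbol u_k\rangle_{W_N}=\nu_k$. First we record that, by~\eqref{eq:generalConstantColumnReduction},
\[
\langle\boldsymbol c_0,\boldsymbol c_0\rangle_{W_N}=\eta_0^2\nu_0+\eta_m^2\nu_m=d_0,
\]
which is positive since $\eta_0\neq0$. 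By~\eqref{eq:generalNonconstantColumnReduction}, for $k\ge1$,
\[
\langle\boldsymbol c_0,\boldsymbol c_k\rangle_{W_N}=\widehat a_k\bigl(\eta_0\langle\boldsymbol u_0,\boldsymbol u_k\rangle_{W_N}+\eta_m\langle\boldsymbol u_m,\boldsymbol u_k\rangle_{W_N}\bigr),
\]
which vanishes for $k\neq m$ and equals $\widehat a_m\eta_m\nu_m$ for $k=m$. Hence $\lambda_m=\widehat a_m\eta_m\nu_m/d_0$ from~\eqref{eq:generalLambda}.

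Next we check the off-diagonal entries. For $j,k\in\{1,\ldots,N-1\}\setminus\{m\}$ with $j\neq k$, the columns are $\widehat a_j\boldsymbol u_j$ and $\widehat a_k\boldsymbol u_k$, which are orthogonal. The pair $(0,k)$ with $k\neq 0,m$ is orthogonal by the first step. The pair $(0,m)$ gives
\[
\langle\boldsymbol c_0,\boldsymbol c_m-\lambda_m\boldsymbol c_0\rangle_{W_N}=0
\]
by the very definition of $\lambda_m$; this is just one Gram--Schmidt step. For the pair $(m,k)$ with $k\neq0,m$, we get
\[
\langle\boldsymbol c_m-\lambda_m\boldsymbol c_0,\widehat a_k\boldsymbol u_k\rangle_{W_N}=0,
\]
because $\boldsymbol c_m\propto\boldsymbol u_m$ and $\boldsymbol c_0\in\operatorname{span}\{\boldsymbol u_0,\boldsymbol u_m\}$ are both orthogonal to $\boldsymbol u_k$.

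For the diagonal entries, the entry $d_0$ was obtained above, and $d_k=\widehat a_k^2\nu_k$ for $k\neq0,m$ is immediate. The only real computation is $d_m$:
\[
\langle\widehat{\boldsymbol c}_m,\widehat{\boldsymbol c}_m\rangle_{W_N}=\langle\boldsymbol c_m,\boldsymbol c_m\rangle_{W_N}-\frac{\langle\boldsymbol c_0,\boldsymbol c_m\rangle_{W_N}^2}{\langle\boldsymbol c_0,\boldsymbol c_0\rangle_{W_N}}=\widehat a_m^2\nu_m-\frac{\widehat a_m^2\eta_m^2\nu_m^2}{\eta_0^2\nu_0+\eta_m^2\nu_m}.
\]
Putting this over the common denominator gives $\widehat a_m^2\nu_0\nu_m\eta_0^2/(\eta_0^2\nu_0+\eta_m^2\nu_m)$, as claimed. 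I expect this algebraic simplification to be the only step requiring care, and it is routine.

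Finally, nonsingularity follows as in Proposition~\ref{prop:columnwiseTransformMatching}. All $d_k>0$ because $\nu_k>0$, $\eta_0\neq0$ and $\widehat a_k\neq0$. Hence $\hat M_N^\top W_N\hat M_N$ is positive definite, so $\hat M_N$ has trivial kernel.
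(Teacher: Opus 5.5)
Your proposal is correct and follows essentially the same route as the paper. You compute $\langle\boldsymbol c_0,\boldsymbol c_0\rangle_{W_N}$ and $\langle\boldsymbol c_0,\boldsymbol c_k\rangle_{W_N}$ to identify $\lambda_m$, check each class of off-diagonal entries, obtain $d_m$ from the identity $\langle\boldsymbol c_m,\boldsymbol c_m\rangle_{W_N}-\langle\boldsymbol c_0,\boldsymbol c_m\rangle_{W_N}^2/\langle\boldsymbol c_0,\boldsymbol c_0\rangle_{W_N}$, and conclude nonsingularity from positive definiteness, exactly as the paper does (your use of $\boldsymbol c_0\in\operatorname{span}\{\boldsymbol u_0,\boldsymbol u_m\}$ for the $(m,k)$ entries is the same computation in different words).
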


\begin{proof}
By~\eqref{diagwq}, we have
\[
\left\langle
\boldsymbol u_j,\boldsymbol u_k
\right\rangle_{W_N}=\boldsymbol u_j^{\top}W_N\boldsymbol u_k
=
\begin{cases}
0, & j\neq k,\\
\nu_k, & j=k,
\end{cases}
\quad
j,k=0,\ldots,N-1.
\]
Using~\eqref{eq:generalConstantColumnReduction}, we obtain
\begin{eqnarray}
\left\langle
\boldsymbol c_0,\boldsymbol c_0\right\rangle_{W_N}
&=&
\left(
\eta_0\boldsymbol u_0+\eta_m\boldsymbol u_m
\right)^{\top}
W_N
\left(
\eta_0\boldsymbol u_0+\eta_m\boldsymbol u_m
\right)
\notag\\
&=&
\eta_0^2\nu_0+\eta_m^2\nu_m.
\label{eq:generalMuZeroProof}
\end{eqnarray}
Moreover, by~\eqref{eq:generalConstantColumnReduction}
and~\eqref{eq:generalNonconstantColumnReduction}, we have
\begin{equation}
\label{eq:generalMixedProduct}
\left\langle
\boldsymbol c_0,\boldsymbol c_k
\right\rangle_{W_N}
=
\begin{cases}
0, & k\neq m,\\
\eta_m\widehat a_m\nu_m, & k=m,
\end{cases}
\quad
k=1,\ldots,N-1.
\end{equation}
Therefore, by~\eqref{eq:generalLambda}, we get
\begin{equation}
\label{newlambda}
\lambda_m
=
\frac{
\eta_m\widehat a_m\nu_m
}
{
\eta_0^2\nu_0+\eta_m^2\nu_m
}.
\end{equation}
We now compute the scalar products between the columns of
$\hat{M}_N$. By~\eqref{newbasis1} and~\eqref{eq:generalLambda}, we have
\begin{eqnarray*}
\left\langle
\widehat{\boldsymbol c}_0,\widehat{\boldsymbol c}_m
\right\rangle_{W_N}=\widehat{\boldsymbol c}_0^{\top}
W_N
\widehat{\boldsymbol c}_m
&=&
\boldsymbol c_0^{\top}
W_N
\left(
\boldsymbol c_m-\lambda_m\boldsymbol c_0
\right)\\
&=&
\boldsymbol c_0^{\top}W_N\boldsymbol c_m
-
\lambda_m
\boldsymbol c_0^{\top}W_N\boldsymbol c_0\\
&=&
0.
\end{eqnarray*}
For $k\in\{1,\ldots,N-1\}\setminus\{m\}$,
using~\eqref{eq:generalMixedProduct}, we obtain
\[
\left\langle
\widehat{\boldsymbol c}_0,\widehat{\boldsymbol c}_k
\right\rangle_{W_N}=\widehat{\boldsymbol c}_0^{\top}
W_N
\widehat{\boldsymbol c}_k
=
\boldsymbol c_0^{\top}W_N\boldsymbol c_k
=
0.
\]
Similarly, using~\eqref{eq:generalMixedProduct}
and~\eqref{eq:generalNonconstantColumnReduction}, we have
\begin{eqnarray*}
\left\langle
\widehat{\boldsymbol c}_m,\widehat{\boldsymbol c}_k
\right\rangle_{W_N}
&=&
\left(
\boldsymbol c_m-\lambda_m\boldsymbol c_0
\right)^{\top}
W_N
\boldsymbol c_k\\
&=&
\boldsymbol c_m^{\top}W_N\boldsymbol c_k
-
\lambda_m
\boldsymbol c_0^{\top}W_N\boldsymbol c_k\\
&=&
\widehat a_m\widehat a_k
\boldsymbol u_m^{\top}W_N\boldsymbol u_k\\
&=&
0.
\end{eqnarray*}
For the remaining off-diagonal entries, namely for
\[
j,k\in\{1,\ldots,N-1\}\setminus\{m\},
\quad
j\neq k,
\]
using~\eqref{akequation}
and~\eqref{eq:generalNonconstantColumnReduction}, we obtain
\[
\left\langle
\widehat{\boldsymbol c}_j,\widehat{\boldsymbol c}_k
\right\rangle_{W_N}=\widehat{\boldsymbol c}_j^{\top}
W_N
\widehat{\boldsymbol c}_k
=
\boldsymbol c_j^{\top}W_N\boldsymbol c_k
=
\widehat a_j\widehat a_k
\boldsymbol u_j^{\top}W_N\boldsymbol u_k
=
0.
\]
Thus all the off-diagonal entries of
$\hat{M}_N^{\top}W_N\hat{M}_N$ vanish.

It remains to compute the diagonal entries. First,
by~\eqref{eq:generalMuZeroProof}, we get
\[
\left\langle
\widehat{\boldsymbol c}_0,\widehat{\boldsymbol c}_0
\right\rangle_{W_N}=\widehat{\boldsymbol c}_0^{\top}
W_N
\widehat{\boldsymbol c}_0
=
\boldsymbol c_0^{\top}W_N\boldsymbol c_0
=
\eta_0^2\nu_0+\eta_m^2\nu_m.
\]
For
\[
k\in\{1,\ldots,N-1\}\setminus\{m\},
\]
we have
\[
\left\langle
\widehat{\boldsymbol c}_k,\widehat{\boldsymbol c}_k
\right\rangle_{W_N}=\widehat{\boldsymbol c}_k^{\top}
W_N
\widehat{\boldsymbol c}_k
=
\widehat a_k^2
\boldsymbol u_k^{\top}W_N\boldsymbol u_k
=
\widehat a_k^2\nu_k.
\]
Finally, by~\eqref{eq:generalMuZeroProof},
\eqref{eq:generalMixedProduct}, and~\eqref{newlambda}, we obtain
\begin{eqnarray*}
\left\langle
\widehat{\boldsymbol c}_m,\widehat{\boldsymbol c}_m
\right\rangle_{W_N}
&=&
\left(
\boldsymbol c_m-\lambda_m\boldsymbol c_0
\right)^{\top}
W_N
\left(
\boldsymbol c_m-\lambda_m\boldsymbol c_0
\right)\\
&=&
\boldsymbol c_m^{\top}W_N\boldsymbol c_m
-
\frac{
\left(
\boldsymbol c_0^{\top}W_N\boldsymbol c_m
\right)^2
}
{
\boldsymbol c_0^{\top}W_N\boldsymbol c_0
}\\
&=&
\widehat a_m^2\nu_m
-
\frac{
\eta_m^2\widehat a_m^2\nu_m^2
}
{
\eta_0^2\nu_0+\eta_m^2\nu_m
}\\
&=&
\frac{
\widehat a_m^2\nu_0\nu_m\eta_0^2
}
{
\eta_0^2\nu_0+\eta_m^2\nu_m
}.
\end{eqnarray*}
Since $\eta_0\neq0$, $\widehat a_k\neq0$, and $\nu_k>0$, all the
diagonal entries are positive. Therefore,
$\hat{M}_N^{\top}W_N\hat{M}_N$ is positive definite, and hence
$\hat{M}_N$ is nonsingular.
\end{proof}

\begin{remark}
The value $\lambda_m$ in~\eqref{eq:generalLambda} is uniquely determined
by the requirement that the modified $m$-th column be orthogonal to the
first one. Indeed, for any $\lambda\in\mathbb R$, replacing $P_m$ by
$P_m-\lambda$ gives the column
\[
\boldsymbol c_m-\lambda\boldsymbol c_0.
\]
Its scalar product with $\boldsymbol c_0$ is
\[
\boldsymbol c_0^{\top}W_N
\left(
\boldsymbol c_m-\lambda\boldsymbol c_0
\right)
=
\boldsymbol c_0^{\top}W_N\boldsymbol c_m
-
\lambda
\boldsymbol c_0^{\top}W_N\boldsymbol c_0.
\]
Since
\[
\boldsymbol c_0^{\top}W_N\boldsymbol c_0>0,
\]
this scalar product vanishes if and only if
\[
\lambda
=
\frac{
\boldsymbol c_0^{\top}W_N\boldsymbol c_m
}
{
\boldsymbol c_0^{\top}W_N\boldsymbol c_0
}
=\frac{\left\langle
\boldsymbol c_0,\boldsymbol c_m
\right\rangle_{W_N}}{\left\langle
\boldsymbol c_0,\boldsymbol c_0
\right\rangle_{W_N}}=
\lambda_m.
\]
All the remaining off-diagonal scalar products vanish independently of
$\lambda$. Therefore, $\lambda_m$ is the unique value for which the
replacement
\[
P_m(x)\to P_m(x)-\lambda
\]
yields a diagonal weighted Gram matrix.
\end{remark}

\begin{remark}
The correction of the $m$-th column can be written as
\[
\widehat{\boldsymbol c}_m
=
\boldsymbol c_m
-
\frac{\left\langle
\boldsymbol c_0,\boldsymbol c_m
\right\rangle_{W_N}}{\left\langle
\boldsymbol c_0,\boldsymbol c_0
\right\rangle_{W_N}}
\boldsymbol c_0.
\]
Thus, $\widehat{\boldsymbol c}_m$ is obtained by subtracting from
$\boldsymbol c_m$ its $W_N$-orthogonal projection onto
$\operatorname{span}\left\{\boldsymbol c_0\right\}$.
\end{remark}

\begin{remark}
The proof of Theorem~\ref{thm:singleColumnCorrection} uses only the symmetry and
positive definiteness of the inner product induced by $W_N$, together with
the orthogonality relation~\eqref{eq:generalWeightedOrthogonality}.
Consequently, the same conclusion remains valid when $W_N$ is an arbitrary
symmetric positive definite matrix.
\end{remark}

\begin{remark}
The diagonal Gram identity in
Theorem~\ref{thm:singleColumnCorrection} also gives the singular values
of the weighted matrix $W_N^{1/2}\hat M_N$, up to ordering. Indeed,
since $W_N$ is positive diagonal, we have
\[
\left(
W_N^{1/2}\hat M_N
\right)^{\top}
\left(
W_N^{1/2}\hat M_N
\right)
=
\hat M_N^{\top}W_N\hat M_N.
\]
Therefore, by~\eqref{eq:generalOneModeDiagonalGram}, the singular values
of $W_N^{1/2}\hat M_N$ are
\[
\sqrt{\mu_0},
\quad
\sqrt{\widehat\mu_m},
\quad
\sqrt{\mu_k},
\quad
k\in\{1,\ldots,N-1\}\setminus\{m\},
\]
up to ordering. Consequently,
\[
\kappa_2\left(W_N^{1/2}\hat M_N\right)
=
\left(
\frac{
\max\left(
\left\{\mu_0,\widehat\mu_m\right\}
\cup
\left\{\mu_k:\ 1\leq k\leq N-1,\ k\neq m\right\}
\right)
}{
\min\left(
\left\{\mu_0,\widehat\mu_m\right\}
\cup
\left\{\mu_k:\ 1\leq k\leq N-1,\ k\neq m\right\}
\right)
}
\right)^{1/2}.
\]
In the special case $W_N=I_N$, these are the usual singular values and
the spectral condition number of $\hat M_N$.
\end{remark}

\begin{remark}
For $N\geq2$, Theorem~\ref{thmChebFirstKindDiagonalGram} can be recovered
as a special case of Theorem~\ref{thm:singleColumnCorrection}. Let
\[
W_N=I_N,
\quad
\xi_i=\tau_i=\frac{(2i-1)\pi}{2N},
\quad
i=1,\ldots,N,
\]
and consider the functions
\[
\psi_0(\theta)=1,
\quad
\psi_k(\theta)=\cos(k\theta),
\quad
k=1,\ldots,N-1.
\]
The corresponding sampled vectors are
\[
\boldsymbol u_0=(1,\ldots,1)^{\top}
\]
and
\[
\boldsymbol u_k
=
\left(
\cos\left(k\xi_1\right),\ldots,\cos\left(k\xi_N\right)
\right)^{\top},
\quad
k=1,\ldots,N-1.
\]
By Lemma~\ref{lemmaCosineDiscreteOrthogonality}, we have
\[
\boldsymbol u_j^{\top}\boldsymbol u_k
=
\frac{N}{2}
\left(
1+\delta_{k0}
\right)
\delta_{jk},
\quad
j,k=0,\ldots,N-1,
\]
where $\delta_{jk}$ denotes the Kronecker delta. Thus,
\[
\nu_0=N,
\quad
\nu_k=\frac{N}{2},
\quad
k=1,\ldots,N-1.
\]
For the first kind Chebyshev weight, all the cell masses are equal to
$2\rho$. Hence, the columns of the corresponding unnormalized moment
matrix $M_N$ satisfy
\[
\boldsymbol c_0
=
2\rho\boldsymbol u_0
\]
and, by~\eqref{HNchEntries},
\[
\boldsymbol c_k
=
\frac{2a_k\sin(k\rho)}{k}\boldsymbol u_k,
\quad
k=1,\ldots,N-1,
\]
where
\[
a_k=\frac{(1/2)_k}{k!}.
\]
Therefore, in the notation of
Theorem~\ref{thm:singleColumnCorrection}, we may take, for instance,
\[
m=1,
\quad
\eta_0=2\rho,
\quad
\eta_1=0,
\]
and
\[
\widehat a_k
=
\frac{2a_k\sin(k\rho)}{k},
\quad
k=1,\ldots,N-1.
\]
Since $\eta_1=0$, formula~\eqref{newlambda} gives
\[
\lambda_1=0.
\]
Thus, no correction is required and $\hat M_N=M_N$. The diagonal entries
of $M_N^{\top}M_N$ are therefore
\[
d_0=4N\rho^2
\]
and
\[
d_k
=
\frac{N}{2}
\left[
\frac{2a_k\sin(k\rho)}{k}
\right]^2,
\quad
k=1,\ldots,N-1.
\]
Since all the cell masses are equal to $2\rho$, the normalized
histopolation matrix is
\[
H_N=\frac{1}{2\rho}M_N.
\]
Consequently,
\[
H_N^{\top}H_N
=
\frac{1}{4\rho^2}M_N^{\top}M_N.
\]
\end{remark}
We now apply Proposition~\ref{prop:columnwiseTransformMatching} to a
configuration based on a discrete sine system. In particular, we consider
\[
W_N=I_N,
\quad
\xi_i=\frac{i\pi}{N+1},
\quad
i=1,\ldots,N,
\]
together with the functions
\[
\psi_{k-1}(\theta)=\sin(k\theta),
\quad
k=1,\ldots,N.
\]
We choose the cells and the polynomial basis so that the columns
of the associated histopolation matrix are nonzero scalar multiples of the
sampled vectors
\[
\boldsymbol u_{k-1}
=
\left(
\sin\left(k\xi_1\right),\ldots,\sin\left(k\xi_N\right)
\right)^{\top},
\quad
k=1,\ldots,N.
\]
Let
\[
0<\rho<\frac{\pi}{N+1},
\]
and define
\[
s_i
=
[\cos(\xi_i+\rho),\cos(\xi_i-\rho)],
\quad
i=1,\ldots,N.
\]
The restriction on $\rho$ ensures that
\[
0<\xi_i-\rho<\xi_i+\rho<\pi,
\quad
i=1,\ldots,N,
\]
and hence that each cell $s_i$ is contained in $(-1,1)$. Let $U_k$ denote the Chebyshev polynomial of the second kind,
characterized by
\[
U_k(\cos\theta)\sin\theta
=
\sin((k+1)\theta),
\quad
0<\theta<\pi.
\]
We define the corresponding unnormalized histopolation matrix
$M_N\in\mathbb R^{N\times N}$ by
\begin{equation}
\label{M_Nsecondtype}
\left[M_N\right]_{i,k}
=
\int_{s_i}U_{k-1}(x)dx,
\quad
i,k=1,\ldots,N.
\end{equation}

We also recall the following standard discrete sine orthogonality
relations; see, for instance, \cite[Chapter~4]{Mason:2002:CP}.

\begin{lemma}
\label{lemmaSineDiscreteOrthogonality}
For the nodes defined above, we have
\begin{equation*}
\boldsymbol u_{k-1}^{\top}\boldsymbol u_{\ell-1}
=
\sum_{i=1}^N
\sin(k\xi_i)\sin(\ell\xi_i)
=
\begin{cases}
0, & k\neq \ell,\\
(N+1)/2, & k=\ell,
\end{cases}
\end{equation*}
for $k,\ell=1,\ldots,N.$
\end{lemma}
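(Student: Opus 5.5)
The plan is to prove the discrete sine orthogonality directly by reducing products of sines to sums of cosines, and then evaluating geometric-type sums of cosines at the nodes $\xi_i=i\pi/(N+1)$.

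First, use the product formula
\[
\sin(k\xi_i)\sin(\ell\xi_i)=\tfrac12\left[\cos((k-\ell)\xi_i)-\cos((k+\ell)\xi_i)\right],
\]
so that the claim reduces to computing
\[
C(p):=\sum_{i=1}^N\cos\left(\frac{ip\pi}{N+1}\right)
\]
for integers $p$ with $0\le |p|\le 2N$.

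The key step is to evaluate $C(p)$. Since cosine is even, only $|p|$ matters, and $C(0)=N$. For $1\le p\le 2N$, extend the sum to $i=0,\ldots,2N+1$, which runs over a full period of $\theta\mapsto \cos(p\theta)$ sampled at $\theta=i\pi/(N+1)$. Write $\cos$ as the real part of $e^{\mathrm{i}ip\pi/(N+1)}$ and sum the geometric series. The ratio is $e^{\mathrm{i}p\pi/(N+1)}\neq1$, because $p$ is not a multiple of $2N+2$, and its $(2N+2)$-th power equals $1$. Hence the full sum vanishes.

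Next, relate the full sum back to $C(p)$. By the symmetry $i\mapsto 2N+2-i$, the terms with $i=N+2,\ldots,2N+1$ reproduce those with $i=1,\ldots,N$. The remaining terms are $i=0$, contributing $1$, and $i=N+1$, contributing $\cos(p\pi)=(-1)^p$. Therefore
\[
0=2C(p)+1+(-1)^p,
\]
which gives $C(p)=-\tfrac{1+(-1)^p}{2}$.

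Finally, assemble the result. For $k\neq\ell$, the integers $k-\ell$ and $k+\ell$ have the same parity and both lie in $[1,2N]$ in absolute value. Their $C$-values therefore coincide and cancel, so the sum is $0$. For $k=\ell$, we get
\[
\tfrac12\left[N-C(2k)\right]=\tfrac12(N+1),
\]
since $2k\in[2,2N]$ is even and $C(2k)=-1$.

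The only delicate point is the bookkeeping in the symmetric extension: the ranges must be tracked carefully, in particular that $p\le 2N<2N+2$. Beyond that the computation is routine.
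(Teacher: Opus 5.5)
Your proof is correct and complete. The product-to-sum reduction, the evaluation $C(p)=-\tfrac{1+(-1)^p}{2}$ for $1\le p\le 2N$ via the $(2N+2)$-point geometric sum and the reflection $i\mapsto 2N+2-i$, and the parity argument for $k\neq\ell$ all check out. You also rightly made sure that $|k-\ell|\ge 1$ before applying the formula for $C$.

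The paper does not prove this lemma. It recalls it as a standard fact with a pointer to \cite[Chapter~4]{Mason:2002:CP}, just as it does for the cosine relations in Lemma~\ref{lemmaCosineDiscreteOrthogonality}. Your argument therefore replaces a citation with a self-contained derivation.

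Another standard route is spectral. The vectors $\boldsymbol u_{k-1}$ are eigenvectors of the symmetric tridiagonal matrix with $2$ on the diagonal and $-1$ off it. The corresponding eigenvalues $2-2\cos(k\pi/(N+1))$ are pairwise distinct, so orthogonality follows at once. This is the same mechanism the paper uses for $A_{\widehat{\mathcal S}_{\mathrm{sw}}}^{\top}A_{\widehat{\mathcal S}_{\mathrm{sw}}}$ in Proposition~\ref{prop:nestedSlidingSpectralProperties}. That route, however, still needs a trigonometric sum to get the squared norms $(N+1)/2$, namely your $C(2k)=-1$.

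Your direct computation gives orthogonality and normalization together. The same reflection-plus-geometric-sum template also proves the cosine lemma the paper cites: apply it to the $2N$ points $(2i-1)\pi/(2N)$, $i=1,\ldots,2N$, with the reflection $i\mapsto 2N+1-i$.
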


The construction considered here is related to the unweighted
histopolation problem studied in~\cite{Bruni:2025:OTC}. There, the
Chebyshev polynomials of the second kind are integrated over cells of
constant angular length centered at
\[
\tau_i=\frac{(2i-1)\pi}{2N},
\quad
i=1,\ldots,N.
\]
The resulting unnormalized histopolation matrix has orthogonal columns,
which yields explicit expressions for its Gram matrix and spectral
condition number. The configuration considered below retains both the
polynomial basis and the angular parametrization of the cells, while
replacing the points $\tau_i$ by
\[
\xi_i=\frac{i\pi}{N+1},
\quad
i=1,\ldots,N.
\]

\begin{proposition}
\label{prop:SineHistopolation}
The unnormalized histopolation matrix $M_N$ defined
in~\eqref{M_Nsecondtype} satisfies
\begin{equation}
\label{eq:SineEntries}
\left[M_N\right]_{i,k}
=
\frac{2\sin(k\rho)}{k}\sin(k\xi_i),
\quad
i,k=1,\ldots,N.
\end{equation}
Consequently,
\begin{equation}
\label{eq:SineGram}
M_N^{\top}M_N
=
\operatorname{diag}
\left(
\frac{N+1}{2}
\left[
\frac{2\sin(k\rho)}{k}
\right]^2
\right)_{k=1}^N.
\end{equation}
In particular, $M_N$ is nonsingular.
\end{proposition}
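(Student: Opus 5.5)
The plan is to compute the entries of $M_N$ explicitly through the substitution $x=\cos\theta$, exactly as in the proof of Theorem~\ref{thmChebFirstKindDiagonalGram}. Then I will read off the Gram identity from Proposition~\ref{prop:columnwiseTransformMatching} with $W_N=I_N$ and the discrete sine orthogonality of Lemma~\ref{lemmaSineDiscreteOrthogonality}.

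\emph{Entries of $M_N$.} Fix $i$ and $k$. Since $0<\xi_i-\rho<\xi_i+\rho<\pi$, the map $\theta\mapsto\cos\theta$ is a decreasing bijection from $[\xi_i-\rho,\xi_i+\rho]$ onto $s_i$, and $dx=-\sin\theta\,d\theta$. This gives
\[
\left[M_N\right]_{i,k}=\int_{\xi_i-\rho}^{\xi_i+\rho}U_{k-1}(\cos\theta)\sin\theta\, d\theta=\int_{\xi_i-\rho}^{\xi_i+\rho}\sin(k\theta)\, d\theta,
\]
using the defining relation $U_{k-1}(\cos\theta)\sin\theta=\sin(k\theta)$. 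Integrating yields $\frac{1}{k}\left[\cos(k(\xi_i-\rho))-\cos(k(\xi_i+\rho))\right]$. The product formula $\cos(a-b)-\cos(a+b)=2\sin a\sin b$ then turns this into $\frac{2\sin(k\rho)}{k}\sin(k\xi_i)$, which is \eqref{eq:SineEntries}.

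\emph{Gram matrix.} Formula \eqref{eq:SineEntries} says exactly that hypothesis \eqref{eq:columnwiseTransformMatching} holds with $\omega\equiv1$, $P_{k-1}=U_{k-1}$, $\psi_{k-1}(\theta)=\sin(k\theta)$ and $\gamma_{k-1}=2\sin(k\rho)/k$. Lemma~\ref{lemmaSineDiscreteOrthogonality} provides \eqref{eq:generalWeightedOrthogonality} with $W_N=I_N$ and $\nu_{k-1}=(N+1)/2>0$. Proposition~\ref{prop:columnwiseTransformMatching} then yields \eqref{eq:SineGram} directly. Alternatively, one can write $M_N^\top M_N$ entrywise and factor out the constants.

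\emph{Nonsingularity.} The only point needing care is $\gamma_{k-1}\neq0$ for all $k=1,\ldots,N$. From $0<\rho<\pi/(N+1)$ we get $0<k\rho<k\pi/(N+1)\le N\pi/(N+1)<\pi$, so $\sin(k\rho)>0$. Hence all diagonal entries are positive, and nonsingularity follows from the proposition.

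I expect no real obstacle here. The only things to check carefully are the orientation and sign in the change of variables, and the range of $k\rho$ needed for positivity.
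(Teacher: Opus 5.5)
Your proposal is correct and follows essentially the same route as the paper. You compute the entries via the substitution $x=\cos\theta$ and the relation $U_{k-1}(\cos\theta)\sin\theta=\sin(k\theta)$, obtain the Gram identity from Proposition~\ref{prop:columnwiseTransformMatching} with $W_N=I_N$, $\gamma_{k-1}=2\sin(k\rho)/k$ and $\nu_{k-1}=(N+1)/2$, and get nonsingularity from $0<k\rho<\pi$.
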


\begin{proof}
Using the change of variables $x=\cos\theta$, we have
$dx=-\sin\theta d\theta$. Therefore,
\begin{eqnarray*}
\left[M_N\right]_{i,k}
&=&
\int_{\cos(\xi_i+\rho)}^{\cos(\xi_i-\rho)}
U_{k-1}(x) dx=
\int_{\xi_i-\rho}^{\xi_i+\rho}
U_{k-1}(\cos\theta)\sin\theta d\theta\\
&=&
\int_{\xi_i-\rho}^{\xi_i+\rho}
\sin(k\theta) d\theta=
\frac{2\sin(k\rho)}{k}\sin(k\xi_i),
\end{eqnarray*}
which proves~\eqref{eq:SineEntries}.

In the notation of
Proposition~\ref{prop:columnwiseTransformMatching}, we have
\[
W_N=I_N,
\quad
\psi_{k-1}(\theta)=\sin(k\theta),
\quad
\gamma_{k-1}=\frac{2\sin(k\rho)}{k},
\quad
k=1,\ldots,N.
\]
Moreover, Lemma~\ref{lemmaSineDiscreteOrthogonality} gives
\[
\nu_{k-1}=\frac{N+1}{2},
\quad
k=1,\ldots,N.
\]
Since
\[
0<k\rho
\leq N\rho
<
\frac{N\pi}{N+1}
<
\pi,
\quad
k=1,\ldots,N,
\]
we have $\sin(k\rho)>0$, and hence $\gamma_{k-1}\neq0$ for any
$k=1,\ldots,N$. Therefore, by Proposition~\ref{prop:columnwiseTransformMatching}, we have
\[
M_N^{\top}M_N
=
\operatorname{diag}
\left(
\frac{N+1}{2}
\left[
\frac{2\sin(k\rho)}{k}
\right]^2
\right)_{k=1}^N,
\]
which proves~\eqref{eq:SineGram}. Hence
$M_N$ is nonsingular.
\end{proof}

\subsection{Orthogonalization associated with the interval graph}
\label{subsec:intervalGraphOrthogonalization}

We conclude this section with a construction based on the
topological criterion of Section~\ref{sec:combinatorialUnisolvence}.

Let
\[
X_N=\left\{x_0,\ldots,x_N\right\},
\quad
-1=x_0<x_1<\cdots<x_N=1,
\]
and let
\[
e_i=\left[x_{i-1},x_i\right],
\quad
i=1,\ldots,N,
\]
denote the elementary intervals associated with the grid. Let
\[
\mathcal S=\left\{s_1,\ldots,s_N\right\},
\quad
s_i=\left[x_{\ell_i},x_{r_i}\right],
\quad
i=1,\ldots,N,
\]
be a family of intervals whose endpoints belong to the grid. We denote by
\[
\mathcal G_{\mathcal S}
=
\left(\mathcal V_N,\mathcal E_{\mathcal S}\right)
\]
the associated endpoint graph, where
\[
\mathcal V_N=\{0,\ldots,N\},
\quad
\mathcal E_{\mathcal S}
=
\left\{
\{\ell_i,r_i\}\, :\, i=1,\ldots,N
\right\}.
\]
For
\[
\zeta\in\{-1,1\},
\]
we set
\[
\omega_{\alpha,\beta}^{(\zeta)}(x)
=
(1-\zeta x)\omega_{\alpha,\beta}(x)
=
\begin{cases}
\omega_{\alpha+1,\beta}(x), & \zeta=1,\\
\omega_{\alpha,\beta+1}(x), & \zeta=-1,
\end{cases}
\quad
\alpha,\beta>-1.
\]
This weight is positive almost everywhere in $(-1,1)$. For
$p\in\Pi_{N-1}$, we define the corresponding moment vector
\[
\boldsymbol m_{\mathcal S}^{(\zeta)}(p)
=
\left(
\int_{s_1}p(x)\omega_{\alpha,\beta}^{(\zeta)}(x)dx,
\ldots,
\int_{s_N}p(x)\omega_{\alpha,\beta}^{(\zeta)}(x)dx
\right)^{\top}.
\]
Let
\[
W_N=\operatorname{diag}\left(w_1,\ldots,w_N\right),
\quad
w_i>0,
\quad
i=1,\ldots,N.
\]
We introduce the bilinear form
\begin{equation}
\label{eq:intervalGraphInnerProduct}
[p,q]_{\mathcal S,W_N}^{(\zeta)}
=
\left(
\boldsymbol m_{\mathcal S}^{(\zeta)}(p)
\right)^{\top}
W_N
\boldsymbol m_{\mathcal S}^{(\zeta)}(q),
\quad
p,q\in\Pi_{N-1}.
\end{equation}

\begin{proposition}
\label{prop:intervalGraphInnerProduct}
The following statements are equivalent:
\begin{enumerate}
\item $\mathcal G_{\mathcal S}$ is connected;
\item $[\cdot,\cdot]_{\mathcal S,W_N}^{(\zeta)}$ is an inner product on
$\Pi_{N-1}$.
\end{enumerate}
\end{proposition}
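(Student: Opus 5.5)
The bilinear form~\eqref{eq:intervalGraphInnerProduct} is always symmetric and positive semidefinite, because $W_N$ is symmetric positive definite. So the only question is definiteness:
\[
[p,p]_{\mathcal S,W_N}^{(\zeta)}=0 \iff \boldsymbol m_{\mathcal S}^{(\zeta)}(p)=\boldsymbol 0 .
\]
Hence $[\cdot,\cdot]_{\mathcal S,W_N}^{(\zeta)}$ is an inner product on $\Pi_{N-1}$ if and only if the linear map $p\mapsto\boldsymbol m_{\mathcal S}^{(\zeta)}(p)$ from $\Pi_{N-1}$ to $\mathbb R^N$ is injective. Both spaces have dimension $N$, so injectivity is equivalent to invertibility. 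Fixing any basis $P_0,\ldots,P_{N-1}$ with $\deg P_k=k$, this map is represented by the moment matrix $M_{\mathcal S}$ built with the weight $\omega=\omega_{\alpha,\beta}^{(\zeta)}$. The plan is therefore to show
\[
M_{\mathcal S}\ \text{nonsingular}\iff \mathcal G_{\mathcal S}\ \text{connected}.
\]

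For this I will use Proposition~\ref{prop:subgridIntervalFactorization}, which gives $M_{\mathcal S}=A_{\mathcal S}M_{\mathcal E}$. Its hypotheses hold: $\omega_{\alpha,\beta}^{(\zeta)}$ equals $\omega_{\alpha+1,\beta}$ or $\omega_{\alpha,\beta+1}$, which lies in $L^1(-1,1)$ and is positive almost everywhere because the exponents exceed $-1$.

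The step needing care is the nonsingularity of $M_{\mathcal E}$, i.e.\ unisolvence of the elementary family. The paper cites this, but a short argument is available. If $p\in\Pi_{N-1}$ has $\int_{e_j}p\,\omega=0$ for all $j$, then on each $e_j$ the polynomial $p$ either vanishes identically or changes sign in the interior of $e_j$, since $\omega>0$ almost everywhere. This gives $N$ sign changes in distinct open cells, so $p$ has at least $N$ distinct zeros and must be $0$. With $M_{\mathcal E}$ invertible, $\det M_{\mathcal S}=\det A_{\mathcal S}\det M_{\mathcal E}$ vanishes exactly when $\det A_{\mathcal S}$ does.

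Theorem~\ref{thm:endpointTreeCriterion} then says $A_{\mathcal S}$ is nonsingular if and only if $\mathcal G_{\mathcal S}$ is connected, which closes the chain of equivalences. For the direction (2)$\Rightarrow$(1), I expect to use only that singularity of $A_{\mathcal S}$ produces a nonzero $p$ with vanishing moment vector. Explicitly, take $\boldsymbol c\in\ker A_{\mathcal S}\setminus\{\boldsymbol 0\}$ and set $p$ to be the polynomial with coefficient vector $M_{\mathcal E}^{-1}\boldsymbol c$. Then $p\neq 0$ and $[p,p]^{(\zeta)}_{\mathcal S,W_N}=0$, so the form is not an inner product.
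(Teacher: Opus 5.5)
Your proposal is correct and follows essentially the same route as the paper. You reduce definiteness to nonsingularity of $M_{\mathcal S}^{(\zeta)}$, use the factorization $M_{\mathcal S}^{(\zeta)}=A_{\mathcal S}M_{\mathcal E}^{(\zeta)}$ with $M_{\mathcal E}^{(\zeta)}$ nonsingular, and invoke Theorem~\ref{thm:endpointTreeCriterion} in both directions; the one addition is your self-contained sign-change proof that the elementary family is unisolvent, which the paper only cites, and that argument is valid.
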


\begin{proof}
The bilinear form defined in~\eqref{eq:intervalGraphInnerProduct} is
symmetric. Hence, to prove that it is an inner product, it remains only to
characterize its positive definiteness.

Let
\begin{equation}\label{basisa}
    P_0,\ldots,P_{N-1}, \quad \deg\left(P_k\right)=k, \quad k=0,\dots,N-1
\end{equation}
be any basis of $\Pi_{N-1}$. We denote by
$M_{\mathcal E}^{(\zeta)}$ and $M_{\mathcal S}^{(\zeta)}$ the matrices defined by
\begin{eqnarray*}
\left[M_{\mathcal E}^{(\zeta)}\right]_{i,k+1}
&=&
\int_{e_i}P_k(x)\omega_{\alpha,\beta}^{(\zeta)}(x)dx, \quad  i=1,\ldots,N,
\quad
k=0,\ldots,N-1 \\
\left[M_{\mathcal S}^{(\zeta)}\right]_{i,k+1}
&=&
\int_{s_i}P_k(x)\omega_{\alpha,\beta}^{(\zeta)}(x)dx, \quad
    i=1,\ldots,N,
\quad
k=0,\ldots,N-1.
\end{eqnarray*}
By Proposition~\ref{prop:subgridIntervalFactorization}, we have
\begin{equation}
\label{eq:intervalGraphFactorization}
M_{\mathcal S}^{(\zeta)}
=
A_{\mathcal S}M_{\mathcal E}^{(\zeta)}.
\end{equation}
Since $\omega_{\alpha,\beta}^{(\zeta)}$ is positive almost everywhere in
$(-1,1)$, the elementary histopolation problem is unisolvent on
$\Pi_{N-1}$. Hence $M_{\mathcal E}^{(\zeta)}$ is nonsingular.

Assume first that $\mathcal G_{\mathcal S}$ is connected. By
Theorem~\ref{thm:endpointTreeCriterion}, the matrix $A_{\mathcal S}$ is
nonsingular. Therefore, by~\eqref{eq:intervalGraphFactorization}, we have
\[
\det\left(M_{\mathcal S}^{(\zeta)}\right)
=
\det\left(A_{\mathcal S}\right)\det\left(M_{\mathcal E}^{(\zeta)}\right)
\neq 0.
\]
Hence $M_{\mathcal S}^{(\zeta)}$ is nonsingular. Let
$p\in\Pi_{N-1}$ be nonzero and write it with respect to the
basis~\eqref{basisa} as
\[
p=\sum_{k=0}^{N-1}a_kP_k,
\quad
\boldsymbol a=\left(a_0,\ldots,a_{N-1}\right)^{\top}.
\]
Since $p\neq0$, we have $\boldsymbol a\neq\boldsymbol 0$. Moreover,
\[
\boldsymbol m_{\mathcal S}^{(\zeta)}(p)
=
M_{\mathcal S}^{(\zeta)}\boldsymbol a.
\]
Since $M_{\mathcal S}^{(\zeta)}$ is nonsingular, it follows that
\[
\boldsymbol m_{\mathcal S}^{(\zeta)}(p)\neq\boldsymbol 0.
\]
Using the positive definiteness of $W_N$, we obtain
\[
[p,p]_{\mathcal S,W_N}^{(\zeta)}
=
\left(
\boldsymbol m_{\mathcal S}^{(\zeta)}(p)
\right)^{\top}
W_N
\boldsymbol m_{\mathcal S}^{(\zeta)}(p)
>0.
\]
Thus $[\cdot,\cdot]_{\mathcal S,W_N}^{(\zeta)}$ is an inner product on
$\Pi_{N-1}$.

Conversely, assume that $\mathcal G_{\mathcal S}$ is not connected. By
Theorem~\ref{thm:endpointTreeCriterion}, the matrix $A_{\mathcal S}$ is
singular. Hence, by the factorization~\eqref{eq:intervalGraphFactorization},
the matrix $M_{\mathcal S}^{(\zeta)}$ is singular. Therefore, there exists a
nonzero vector
\[
\boldsymbol a=\left(a_0,\ldots,a_{N-1}\right)^{\top}
\]
such that
\[
M_{\mathcal S}^{(\zeta)}\boldsymbol a=\boldsymbol 0.
\]
Let
\[
p=\sum_{k=0}^{N-1}a_kP_k.
\]
Since $P_0,\ldots,P_{N-1}$ is a basis and $\boldsymbol a\neq\boldsymbol 0$,
the polynomial $p$ is nonzero. On the other hand,
\[
\boldsymbol m_{\mathcal S}^{(\zeta)}(p)
=
M_{\mathcal S}^{(\zeta)}\boldsymbol a
=
\boldsymbol 0.
\]
Therefore
\[
[p,p]_{\mathcal S,W_N}^{(\zeta)}=0
\]
with $p\neq0$. Hence the bilinear form is not positive definite, and so it
is not an inner product. Consequently the equivalence is proven.
\end{proof}

Assume now that $\mathcal G_{\mathcal S}$ is connected. Then, by Proposition~\ref{prop:intervalGraphInnerProduct},
$[\cdot,\cdot]_{\mathcal S,W_N}^{(\zeta)}$ is an inner product on
$\Pi_{N-1}$. Applying the Gram--Schmidt orthogonalization to
\[
1,x,\ldots,x^{N-1}
\]
and normalizing the leading coefficient to one, we obtain a unique family
of monic polynomials
\[
\Phi_0,\Phi_1,\ldots,\Phi_{N-1},
\quad
\deg(\Phi_k)=k,
\quad
k=0,\ldots,N-1,
\]
such that
\begin{equation}
\label{eq:intervalGraphOrthogonality}
[\Phi_j,\Phi_k]_{\mathcal S,W_N}^{(\zeta)}
=
h_k\delta_{jk},
\quad
h_k>0,
\quad
j,k=0,\ldots,N-1.
\end{equation}
Let $M_{\mathcal S}^{\Phi,\zeta}\in\mathbb R^{N\times N}$ be the
corresponding histopolation matrix, namely
\[
\left[M_{\mathcal S}^{\Phi,\zeta}\right]_{i,k+1}
=
\int_{s_i}\Phi_k(x)\omega_{\alpha,\beta}^{(\zeta)}(x)dx,
\quad
i=1,\ldots,N,
\quad
k=0,\ldots,N-1.
\]
Then~\eqref{eq:intervalGraphOrthogonality} is equivalent to
\begin{equation*}
\left(M_{\mathcal S}^{\Phi,\zeta}\right)^{\top}
W_N
M_{\mathcal S}^{\Phi,\zeta}
=
\operatorname{diag}\left(h_0,\ldots,h_{N-1}\right),
\quad
h_k>0.
\end{equation*}
Thus the connectedness of $\mathcal G_{\mathcal S}$ yields a polynomial
basis for which the weighted Gram matrix is diagonal and positive definite.

\section{Generalized Jacobi weights and moment reduction}
\label{subsec:generalShiftedJacobiDifferenceQuotient}

The aim of this section is to relate histopolation for generalized Jacobi
weights to histopolation for the classical Jacobi weight. Furthermore, we use this
relation for constructing an exactly diagonal configuration for the Chebyshev
weight of the fourth kind.

Let $\alpha,\beta>-1$, and consider the Jacobi weight
\[
\omega_{\alpha,\beta}(x)
=
(1-x)^\alpha(1+x)^\beta.
\]
Let $p,\gamma_+,\gamma_-\in\mathbb N_0$ and set
\[
d=2p+\gamma_++\gamma_-.
\]
We define
\begin{equation}\label{eq:chi}
\chi_{p,\gamma_+,\gamma_-}(x)
=
x^{2p}(1-x)^{\gamma_+}(1+x)^{\gamma_-}
\end{equation}
and
\begin{equation}
\label{eq:generalizedJacobiWeight}
\omega_{\alpha,\beta}^{(p,\gamma_+,\gamma_-)}(x)
=
\chi_{p,\gamma_+,\gamma_-}(x)
\omega_{\alpha,\beta}(x).
\end{equation}
Since $x^{2p}=|x|^{2p}$, we have
\[
\omega_{\alpha,\beta}^{(p,\gamma_+,\gamma_-)}(x)
=
|x|^{2p}
\omega_{\alpha+\gamma_+,\beta+\gamma_-}(x).
\]
Thus, the weight in~\eqref{eq:generalizedJacobiWeight} belongs to
$L^1(-1,1)$, is nonnegative on $(-1,1)$, and is positive almost
everywhere in $(-1,1)$. Weights of this form are referred to as generalized Jacobi
weights~\cite{Locher:1989:FGJ,Gautschi:2004:OPC}. In the symmetric case
\[
\alpha=\beta=\lambda-\frac{1}{2},
\quad
\lambda>-\frac{1}{2},
\quad
\gamma_+=\gamma_-=0,
\]
the weight~\eqref{eq:generalizedJacobiWeight} reduces to
\[
|x|^{2p}(1-x^2)^{\lambda-1/2},
\]
which is the generalized Gegenbauer weight; see~\cite{Xu:2002:IFG}.

Let $P_n^{(\alpha,\beta)}$ denote the Jacobi polynomial of degree $n$
normalized by
\[
P_n^{(\alpha,\beta)}(1)
=
\frac{(\alpha+1)_n}{n!},
\]
and let $\kappa_n^{(\alpha,\beta)}\neq 0$ denote its leading coefficient. The leading coefficient of
$\chi_{p,\gamma_+,\gamma_-}$ is
\[
\sigma_{\gamma_+}
=
(-1)^{\gamma_+}.
\]
For $k\geq0$, we consider a polynomial of the form
\begin{equation}
\label{eq:generalizedJacobiNumerator}
\mathcal N_k^{(p,\gamma_+,\gamma_-)}(x)
=
\sum_{j=0}^{d}
a_{k,j}^{(p,\gamma_+,\gamma_-)}
P_{k+j}^{(\alpha,\beta)}(x),
\end{equation}
where
\begin{equation}
\label{eq:highestGeneralizedJacobiCoefficient}
a_{k,d}^{(p,\gamma_+,\gamma_-)}
=
\frac{
\sigma_{\gamma_+}
}{
\kappa_{k+d}^{(\alpha,\beta)}
}.
\end{equation}
If $d\geq1$, we require the remaining coefficients
\[
a_{k,0}^{(p,\gamma_+,\gamma_-)},\ldots,a_{k,d-1}^{(p,\gamma_+,\gamma_-)}
\]
to satisfy
\begin{equation}
\label{eq:generalizedJacobiVanishingConditions}
\begin{aligned}
\left(
\mathcal N_k^{(p,\gamma_+,\gamma_-)}
\right)^{(r)}(0)
&=
0,
&&
r=0,\ldots,2p-1,
\\
\left(
\mathcal N_k^{(p,\gamma_+,\gamma_-)}
\right)^{(r)}(1)
&=
0,
&&
r=0,\ldots,\gamma_+-1,
\\
\left(
\mathcal N_k^{(p,\gamma_+,\gamma_-)}
\right)^{(r)}(-1)
&=
0,
&&
r=0,\ldots,\gamma_--1.
\end{aligned}
\end{equation}
These are exactly $d=2p+\gamma_++\gamma_-$ conditions for the $d$
remaining coefficients. Whenever one of the integers $p$, $\gamma_+$,
and $\gamma_-$ is zero, the corresponding line of conditions is
understood to be absent.

The next lemma shows that these conditions uniquely determine the
coefficients and define a polynomial family which is orthogonal with
respect to the modified weight.

\begin{lemma}
\label{lemmaGeneralizedJacobiPolynomialBasis}
For any $k\geq0$, if $d\geq1$, the coefficients
\[
a_{k,0}^{(p,\gamma_+,\gamma_-)},\ldots,a_{k,d-1}^{(p,\gamma_+,\gamma_-)}
\]
are uniquely determined by
\eqref{eq:generalizedJacobiVanishingConditions}. In all cases, the
quotient
\begin{equation}
\label{eq:generalizedJacobiPolynomial}
\mathcal Q_k^{(p,\gamma_+,\gamma_-)}(x)
=
\frac{
\mathcal N_k^{(p,\gamma_+,\gamma_-)}(x)
}{
\chi_{p,\gamma_+,\gamma_-}(x)
}
\end{equation}
is a monic polynomial of degree $k$. If $k\geq1$, then
$\mathcal Q_k^{(p,\gamma_+,\gamma_-)}$ is orthogonal to $\Pi_{k-1}$ with
respect to
$\omega_{\alpha,\beta}^{(p,\gamma_+,\gamma_-)}$.
\end{lemma}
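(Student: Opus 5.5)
We treat this as a Christoffel-type construction: characterize $\mathcal N_k$ as the unique element of a suitable space, then read off divisibility, monicity and orthogonality. Write $V_k=\operatorname{span}\{P_k^{(\alpha,\beta)},\ldots,P_{k+d}^{(\alpha,\beta)}\}$. This space consists exactly of the polynomials of degree at most $k+d$ that are $\omega_{\alpha,\beta}$-orthogonal to $\Pi_{k-1}$ (for $k=0$ there is no constraint).

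\textbf{Existence and uniqueness.} First handle the case $d\geq1$. We must show that the $d\times d$ linear system~\eqref{eq:generalizedJacobiVanishingConditions} for $a_{k,0},\ldots,a_{k,d-1}$ is uniquely solvable. It suffices to show that the homogeneous system has only the trivial solution. Suppose $q=\sum_{j=0}^{d-1}b_jP_{k+j}^{(\alpha,\beta)}$ satisfies all the vanishing conditions. Then $q$ has a zero of order $\ge 2p$ at $0$, of order $\ge\gamma_+$ at $1$, and of order $\ge\gamma_-$ at $-1$. Hence $\chi_{p,\gamma_+,\gamma_-}$ divides $q$, so $q=\chi r$ with $\deg r\le k-1$. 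Since $q\in V_k$, it is $\omega_{\alpha,\beta}$-orthogonal to $\Pi_{k-1}$, and in particular to $r$. This gives
\[
0=\int_{-1}^1 q\,r\,\omega_{\alpha,\beta}\,dx=\int_{-1}^1 r^2\,\chi_{p,\gamma_+,\gamma_-}\,\omega_{\alpha,\beta}\,dx .
\]
The weight $\chi\,\omega_{\alpha,\beta}=|x|^{2p}\omega_{\alpha+\gamma_+,\beta+\gamma_-}$ is positive almost everywhere, so $r=0$ and therefore $q=0$. When $k=0$ the same argument applies with $r$ forced to vanish by degree. This divisibility-plus-orthogonality step is the heart of the proof; it is where positivity of the generalized weight is used.

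\textbf{Divisibility and monicity.} The unique solution makes $\mathcal N_k$ vanish to the required orders at $0,\pm1$, so $\chi$ divides $\mathcal N_k$ and $\mathcal Q_k$ is a polynomial. Its degree is $k+d-d=k$. The leading coefficient of $\mathcal N_k$ comes only from $P_{k+d}^{(\alpha,\beta)}$ and equals $a_{k,d}\kappa_{k+d}^{(\alpha,\beta)}=\sigma_{\gamma_+}$. Dividing by the leading coefficient $\sigma_{\gamma_+}$ of $\chi$ shows that $\mathcal Q_k$ is monic. If $d=0$, then $\chi\equiv1$ and $\mathcal Q_k=P_k^{(\alpha,\beta)}/\kappa_k^{(\alpha,\beta)}$, which is trivially monic.

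\textbf{Orthogonality.} Let $k\ge1$ and $r\in\Pi_{k-1}$. Then
\[
\int_{-1}^1 \mathcal Q_k\, r\,\omega^{(p,\gamma_+,\gamma_-)}_{\alpha,\beta}\,dx=\int_{-1}^1 \mathcal N_k\, r\,\omega_{\alpha,\beta}\,dx=0,
\]
because every $P_{k+j}^{(\alpha,\beta)}$ with $j\ge0$ is $\omega_{\alpha,\beta}$-orthogonal to $\Pi_{k-1}$.

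The only delicate point is the injectivity argument in the existence step. The remaining steps are degree bookkeeping, together with the observation that vanishing of derivatives to the prescribed orders at distinct points is equivalent to divisibility by $\chi$.
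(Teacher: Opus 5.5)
Your proposal is correct and follows essentially the same route as the paper: you prove injectivity of the homogeneous $d\times d$ system by showing $\chi_{p,\gamma_+,\gamma_-}$ divides the candidate, then use Jacobi orthogonality and the a.e.\ positivity of the modified weight (settling $k=0$ by degree), and you finish with the same leading-coefficient bookkeeping and the identity $\mathcal Q_k\,\omega_{\alpha,\beta}^{(p,\gamma_+,\gamma_-)}=\mathcal N_k\,\omega_{\alpha,\beta}$ for orthogonality. The only step you leave implicit is that $q=0$ forces $b_0=\cdots=b_{d-1}=0$, which follows from the linear independence of Jacobi polynomials of distinct degrees.
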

\begin{proof}
If $d=0$, then necessarily
\[
p=\gamma_+=\gamma_-=0.
\]
By~\eqref{eq:chi}, we have
\[
\chi_{0,0,0}(x)=1,
\]
and hence
\[
\mathcal Q_k^{(0,0,0)}(x)
=
\mathcal N_k^{(0,0,0)}(x)
=
a_{k,0}^{(0,0,0)}P_k^{(\alpha,\beta)}(x)
=
\frac{P_k^{(\alpha,\beta)}(x)}
{\kappa_k^{(\alpha,\beta)}}.
\]
Thus, $\mathcal Q_k^{(0,0,0)}$ is monic of degree $k$ and, for
$k\geq1$, it is orthogonal to $\Pi_{k-1}$ with respect to
$\omega_{\alpha,\beta}$. This proves the result when $d=0$.

Assume now that $d\geq1$. The conditions
in~\eqref{eq:generalizedJacobiVanishingConditions} form a linear system
with $d$ equations and $d$ unknowns. We prove that the corresponding
homogeneous system has only the trivial solution. Let
\begin{equation}\label{eq:R}
    \mathcal R(x)
=
\sum_{j=0}^{d-1}
b_jP_{k+j}^{(\alpha,\beta)}(x)
\end{equation}
satisfy
\begin{equation*}
\begin{cases}
\mathcal R^{(r)}(0)=0, & \quad r=0,\ldots,2p-1,\\
\mathcal R^{(r)}(1)=0, & \quad r=0,\ldots,\gamma_+-1,\\
\mathcal R^{(r)}(-1)=0, & \quad r=0,\ldots,\gamma_--1.
\end{cases}
\end{equation*}
Whenever one of the integers $p$, $\gamma_+$, and $\gamma_-$ is zero,
the corresponding conditions are understood to be absent. Then $\mathcal R$ has a zero of
multiplicity $2p$ at $0$, a zero of multiplicity $\gamma_+$ at $1$, and
a zero of multiplicity $\gamma_-$ at $-1$. Therefore,
$\chi_{p,\gamma_+,\gamma_-}$ divides $\mathcal R$, and hence
\begin{equation}\label{eq:RfactS}
    \mathcal R(x)
=
\chi_{p,\gamma_+,\gamma_-}(x)\mathcal S(x)
\end{equation}
for some polynomial $\mathcal S$. If $k=0$, by~\eqref{eq:R}, we have
\[
\deg\left(\mathcal R\right)\leq d-1.
\]
Since $\mathcal R$ is divisible by a polynomial of degree $d$, it follows
that $\mathcal R=0$. Assume now that $k\geq1$. Since
\[
\deg\left(\mathcal R\right)\leq k+d-1,
\]
we have
\[
\deg\left(\mathcal S\right)\leq k-1.
\]
Moreover,
\[
\mathcal R
\in
\operatorname{span}
\left\{
P_k^{(\alpha,\beta)},
\ldots,
P_{k+d-1}^{(\alpha,\beta)}
\right\}.
\]
Thus, by Jacobi orthogonality, we get
\begin{equation}\label{eq:ort1}
    \int_{-1}^{1}
\mathcal R(x)\mathcal S(x)
\omega_{\alpha,\beta}(x)dx
=
0.
\end{equation}
On the other hand, using~\eqref{eq:RfactS}, we have
\begin{eqnarray}\notag
\int_{-1}^{1}
\mathcal R(x)\mathcal S(x)
\omega_{\alpha,\beta}(x)dx
&=&
\int_{-1}^{1}
\left[\mathcal S(x)\right]^2
\chi_{p,\gamma_+,\gamma_-}(x)
\omega_{\alpha,\beta}(x)dx
\\
&=&
\int_{-1}^{1}
\left[\mathcal S(x)\right]^2
\omega_{\alpha,\beta}^{(p,\gamma_+,\gamma_-)}(x)dx. \label{eq:ort2}
\end{eqnarray}
Then, combining~\eqref{eq:ort1} and~\eqref{eq:ort2}, we obtain
\[
\int_{-1}^{1}
\left[\mathcal S(x)\right]^2
\omega_{\alpha,\beta}^{(p,\gamma_+,\gamma_-)}(x)dx=0.
\]
Since the modified weight~\eqref{eq:generalizedJacobiWeight} is positive
almost everywhere in $(-1,1)$, we conclude that $\mathcal S=0$, and
consequently $\mathcal R=0$. Together with the case $k=0$, this shows that $\mathcal R=0$ for any
$k\geq0$. By the linear independence of the polynomials
\[
P_k^{(\alpha,\beta)},\ldots,
P_{k+d-1}^{(\alpha,\beta)},
\]
it follows that
\[
b_0=\cdots=b_{d-1}=0.
\]
Hence, the homogeneous system has only the trivial solution. Therefore,
the coefficient matrix is nonsingular, and the coefficients
$a_{k,0}^{(p,\gamma_+,\gamma_-)},\ldots,
a_{k,d-1}^{(p,\gamma_+,\gamma_-)}$ are uniquely determined.

The conditions
in~\eqref{eq:generalizedJacobiVanishingConditions} also show that
$\mathcal N_k^{(p,\gamma_+,\gamma_-)}$ is divisible by
$\chi_{p,\gamma_+,\gamma_-}$. Moreover, the term with $j=d$ is the only
term of degree $k+d$ in
$\mathcal N_k^{(p,\gamma_+,\gamma_-)}$. Hence, by
\eqref{eq:highestGeneralizedJacobiCoefficient}, the leading coefficient of
$\mathcal N_k^{(p,\gamma_+,\gamma_-)}$ is
\[
a_{k,d}^{(p,\gamma_+,\gamma_-)}
\kappa_{k+d}^{(\alpha,\beta)}
=
\sigma_{\gamma_+}.
\]
In particular,
\[
\deg\left(
\mathcal N_k^{(p,\gamma_+,\gamma_-)}
\right)
=
k+d.
\]
Since
\[
\deg\left(
\chi_{p,\gamma_+,\gamma_-}
\right)
=
d,
\]
it follows that
\[
\deg\left(
\mathcal Q_k^{(p,\gamma_+,\gamma_-)}
\right)
=
k.
\]
Furthermore, $\chi_{p,\gamma_+,\gamma_-}$ has leading coefficient
$\sigma_{\gamma_+}$. Since
\[
\mathcal N_k^{(p,\gamma_+,\gamma_-)}
=
\chi_{p,\gamma_+,\gamma_-}
\mathcal Q_k^{(p,\gamma_+,\gamma_-)},
\]
the leading coefficient of
$\mathcal Q_k^{(p,\gamma_+,\gamma_-)}$ is one. Therefore,
$\mathcal Q_k^{(p,\gamma_+,\gamma_-)}$ is monic.

It remains to prove orthogonality. Assume that $k\geq1$, and let
\[
q\in\Pi_{k-1}.
\]
Using~\eqref{eq:generalizedJacobiWeight} and~\eqref{eq:generalizedJacobiPolynomial}, we obtain
\begin{equation*}
\int_{-1}^{1}
\mathcal Q_k^{(p,\gamma_+,\gamma_-)}(x)
q(x)
\omega_{\alpha,\beta}^{(p,\gamma_+,\gamma_-)}(x)dx=\int_{-1}^{1}
\mathcal N_k^{(p,\gamma_+,\gamma_-)}(x)
q(x)
\omega_{\alpha,\beta}(x)dx.
\end{equation*}
The last integral is zero, since
\[
\mathcal N_k^{(p,\gamma_+,\gamma_-)}\in\operatorname{span}
\left\{
P_k^{(\alpha,\beta)},
\ldots,
P_{k+d}^{(\alpha,\beta)}
\right\},
\]
whereas
\[
\deg\left(q\right)\leq k-1.
\]
Therefore,
$\mathcal Q_k^{(p,\gamma_+,\gamma_-)}$ is orthogonal to $\Pi_{k-1}$ with
respect to the modified weight~\eqref{eq:generalizedJacobiWeight}.
\end{proof}

\begin{remark}
For any $N\geq1$, since
\[
\deg\left(
\mathcal Q_k^{(p,\gamma_+,\gamma_-)}
\right)
=
k,
\quad
k=0,\ldots,N-1,
\]
the polynomials
\[
\mathcal Q_0^{(p,\gamma_+,\gamma_-)},
\ldots,
\mathcal Q_{N-1}^{(p,\gamma_+,\gamma_-)}
\]
form a basis of $\Pi_{N-1}$.
\end{remark}

\begin{remark}
Since $\mathcal Q_0^{(p,\gamma_+,\gamma_-)}$ is a monic polynomial of
degree zero, we have
\[
\mathcal Q_0^{(p,\gamma_+,\gamma_-)}(x)=1.
\]
If $p=0$, the modified weight~\eqref{eq:generalizedJacobiWeight} is again
a classical Jacobi weight, since
\[
\omega_{\alpha,\beta}^{(0,\gamma_+,\gamma_-)}(x)
=
\omega_{\alpha+\gamma_+,\beta+\gamma_-}(x).
\]
Therefore, by the uniqueness of the monic orthogonal polynomial of each
degree, we have
\begin{equation*}
\mathcal Q_k^{(0,\gamma_+,\gamma_-)}(x)
=
\frac{
P_k^{(\alpha+\gamma_+,\beta+\gamma_-)}(x)
}{
\kappa_k^{(\alpha+\gamma_+,\beta+\gamma_-)}
},
\quad
k\geq0.
\end{equation*}
\end{remark}

\subsection{Reduction of generalized Jacobi cell moments}
\label{subsec:generalizedJacobiMomentReduction}

Let
\[
s_i=\left[a_i,b_i\right]\subset(-1,1),
\quad
i=1,\ldots,N,
\]
and define the Jacobi cell moments by
\begin{equation}
\label{eq:referenceJacobiCellMoments}
\mathcal J_i(n)
=
\int_{s_i}
P_n^{(\alpha,\beta)}(x)
\omega_{\alpha,\beta}(x)dx,
\quad
n\geq0.
\end{equation}
Let $\mathcal M_N^{(p,\gamma_+,\gamma_-)}
\in\mathbb R^{N\times N}$ be the generalized Jacobi moment matrix defined by
\begin{equation*}
\left[
\mathcal M_N^{(p,\gamma_+,\gamma_-)}
\right]_{i,k+1}
=
\int_{s_i}
\mathcal Q_k^{(p,\gamma_+,\gamma_-)}(x)
\omega_{\alpha,\beta}^{(p,\gamma_+,\gamma_-)}(x)dx,
\end{equation*}
for $i=1,\ldots,N$, $k=0,\ldots,N-1$.

The following proposition shows that the cell moments associated with the
generalized Jacobi weight can be expressed as finite linear combinations
of classical Jacobi cell moments.

\begin{proposition}
\label{prop:generalizedJacobiMomentReduction}
For any $i=1,\ldots,N$ and $k=0,\ldots,N-1$, we have
\begin{equation}
\label{eq:generalizedJacobiMomentReduction}
\left[
\mathcal M_N^{(p,\gamma_+,\gamma_-)}
\right]_{i,k+1}
=
\sum_{j=0}^{d}
a_{k,j}^{(p,\gamma_+,\gamma_-)}
\mathcal J_i(k+j),
\end{equation}
where the coefficients
\[
a_{k,j}^{(p,\gamma_+,\gamma_-)},
\quad
j=0,\ldots,d,
\]
are those appearing in~\eqref{eq:generalizedJacobiNumerator}.
\end{proposition}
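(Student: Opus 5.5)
The plan is to rewrite the integrand pointwise using Lemma~\ref{lemmaGeneralizedJacobiPolynomialBasis}, and then apply linearity of the integral over each cell $s_i$.

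First, by~\eqref{eq:generalizedJacobiWeight} and~\eqref{eq:generalizedJacobiPolynomial}, for every $x\in s_i$ we have
\[
\mathcal Q_k^{(p,\gamma_+,\gamma_-)}(x)\,\omega_{\alpha,\beta}^{(p,\gamma_+,\gamma_-)}(x)
=
\mathcal Q_k^{(p,\gamma_+,\gamma_-)}(x)\,\chi_{p,\gamma_+,\gamma_-}(x)\,\omega_{\alpha,\beta}(x)
=
\mathcal N_k^{(p,\gamma_+,\gamma_-)}(x)\,\omega_{\alpha,\beta}(x).
\]
This is a polynomial identity and not merely a quotient relation. Lemma~\ref{lemmaGeneralizedJacobiPolynomialBasis} shows that $\chi_{p,\gamma_+,\gamma_-}$ divides $\mathcal N_k^{(p,\gamma_+,\gamma_-)}$, so $\mathcal N_k^{(p,\gamma_+,\gamma_-)}=\chi_{p,\gamma_+,\gamma_-}\mathcal Q_k^{(p,\gamma_+,\gamma_-)}$ holds everywhere. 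In particular it also holds at $x=0$, where $\chi$ may vanish, and any removable singularity of the quotient is irrelevant. In the case $d=0$ the identity is immediate because $\chi_{0,0,0}=1$.

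Second, we insert the expansion~\eqref{eq:generalizedJacobiNumerator} of $\mathcal N_k^{(p,\gamma_+,\gamma_-)}$ as the finite sum $\sum_{j=0}^d a_{k,j}^{(p,\gamma_+,\gamma_-)}P_{k+j}^{(\alpha,\beta)}$ and integrate over $s_i$. Each term $P_{k+j}^{(\alpha,\beta)}\omega_{\alpha,\beta}$ is integrable on $s_i\subset(-1,1)$, since $\omega_{\alpha,\beta}\in L^1(-1,1)$ and polynomials are bounded on $[-1,1]$. Linearity therefore gives $\sum_{j=0}^{d}a_{k,j}^{(p,\gamma_+,\gamma_-)}\mathcal J_i(k+j)$, using the definition~\eqref{eq:referenceJacobiCellMoments}. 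This is~\eqref{eq:generalizedJacobiMomentReduction}.

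There is no substantial obstacle in this argument. The only point needing care is justifying the pointwise factorization $\mathcal N_k=\chi\,\mathcal Q_k$, and that follows from the divisibility established in Lemma~\ref{lemmaGeneralizedJacobiPolynomialBasis}.
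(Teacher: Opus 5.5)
Your proposal is correct and follows the paper's proof. Like the paper, you use $\mathcal N_k^{(p,\gamma_+,\gamma_-)}=\chi_{p,\gamma_+,\gamma_-}\mathcal Q_k^{(p,\gamma_+,\gamma_-)}$ to rewrite the integrand as $\mathcal N_k^{(p,\gamma_+,\gamma_-)}\omega_{\alpha,\beta}$, expand via~\eqref{eq:generalizedJacobiNumerator}, and integrate term by term over $s_i$; your added remarks on divisibility (from Lemma~\ref{lemmaGeneralizedJacobiPolynomialBasis}) and on integrability just make explicit what the paper leaves implicit.
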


\begin{proof}
Using~\eqref{eq:generalizedJacobiWeight} and
\eqref{eq:generalizedJacobiPolynomial}, we obtain
\begin{equation*}
\mathcal Q_k^{(p,\gamma_+,\gamma_-)}(x)
\omega_{\alpha,\beta}^{(p,\gamma_+,\gamma_-)}(x)
=
\mathcal N_k^{(p,\gamma_+,\gamma_-)}(x)
\omega_{\alpha,\beta}(x).
\end{equation*}
Using~\eqref{eq:generalizedJacobiNumerator} and integrating over
$s_i$, we obtain
\begin{eqnarray*}
\left[
\mathcal M_N^{(p,\gamma_+,\gamma_-)}
\right]_{i,k+1}
&=&
\int_{s_i}
\mathcal N_k^{(p,\gamma_+,\gamma_-)}(x)
\omega_{\alpha,\beta}(x)dx
\\
&=&
\sum_{j=0}^{d}
a_{k,j}^{(p,\gamma_+,\gamma_-)}
\int_{s_i}
P_{k+j}^{(\alpha,\beta)}(x)
\omega_{\alpha,\beta}(x)dx
\\
&=&
\sum_{j=0}^{d}
a_{k,j}^{(p,\gamma_+,\gamma_-)}
\mathcal J_i(k+j),
\end{eqnarray*}
which proves~\eqref{eq:generalizedJacobiMomentReduction}.
\end{proof}

For any positive degree, the Jacobi cell moments admit an
explicit representation in terms of endpoint values.

\begin{lemma}
\label{lemmaJacobiCellMomentPrimitive}
For any $i=1,\ldots,N$ and $n\geq1$, we have
\begin{equation}
\label{eq:JacobiCellMomentPrimitive}
\mathcal J_i(n)
=
\left[
-\frac{1}{2n}
(1-x)^{\alpha+1}(1+x)^{\beta+1}
P_{n-1}^{(\alpha+1,\beta+1)}(x)
\right]_{a_i}^{b_i}.
\end{equation}
\end{lemma}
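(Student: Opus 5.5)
The plan is to use the classical Rodrigues-type differentiation identity for Jacobi polynomials,
\[
\frac{d}{dx}\left[(1-x)^{\alpha+1}(1+x)^{\beta+1}P_{n-1}^{(\alpha+1,\beta+1)}(x)\right]
=
-2n\,(1-x)^{\alpha}(1+x)^{\beta}P_{n}^{(\alpha,\beta)}(x),
\quad n\geq1,
\]
and then apply the fundamental theorem of calculus on $s_i=[a_i,b_i]$. Since $s_i\subset(-1,1)$, the function $F_n(x)=-\frac{1}{2n}(1-x)^{\alpha+1}(1+x)^{\beta+1}P_{n-1}^{(\alpha+1,\beta+1)}(x)$ is $C^1$ on a neighbourhood of $s_i$. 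There are no endpoint singularities of the weight to handle, even when $\alpha$ or $\beta$ is negative. Consequently $\mathcal J_i(n)=F_n(b_i)-F_n(a_i)$, which is exactly~\eqref{eq:JacobiCellMomentPrimitive}.

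The key step is verifying the identity in the normalization used here, namely $P_n^{(\alpha,\beta)}(1)=(\alpha+1)_n/n!$, the classical one. I would cite it from the standard references, e.g.\ \cite{Milovanovic:1997:OPS}, or derive it self-containedly as follows.

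First, compute the derivative of the bracket directly. It equals $(1-x)^{\alpha}(1+x)^{\beta}q(x)$, where
\[
q(x)=\big[(\beta-\alpha)-(\alpha+\beta+2)x\big]P_{n-1}^{(\alpha+1,\beta+1)}(x)+(1-x^2)\big(P_{n-1}^{(\alpha+1,\beta+1)}\big)'(x)
\]
is a polynomial of degree at most $n$.

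Second, show that $q$ is $\omega_{\alpha,\beta}$-orthogonal to $\Pi_{n-1}$. Take $r\in\Pi_{n-1}$ and integrate $\int_{-1}^1 q\,r\,\omega_{\alpha,\beta}\,dx$ by parts. The boundary terms vanish because $(1-x)^{\alpha+1}(1+x)^{\beta+1}\to0$ at $\pm1$, using $\alpha,\beta>-1$. What remains is $-\int_{-1}^1 P_{n-1}^{(\alpha+1,\beta+1)}\,r'\,\omega_{\alpha+1,\beta+1}\,dx$, which is zero since $\deg r'\leq n-2$. Hence $q=c\,P_n^{(\alpha,\beta)}$ for some constant $c$.

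Third, find $c$ by evaluating at $x=1$. There $q(1)=-2(\alpha+1)P_{n-1}^{(\alpha+1,\beta+1)}(1)=-2(\alpha+1)(\alpha+2)_{n-1}/(n-1)!=-2(\alpha+1)_n/(n-1)!$. Comparing with $P_n^{(\alpha,\beta)}(1)=(\alpha+1)_n/n!$ gives $c=-2n$.

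The main obstacle is only this constant check, and it needs no special care in the degenerate case. If $\alpha+1>0$, evaluation at $1$ is legitimate because $q$ is a polynomial. Also $(\alpha+1)_n\neq0$ because $\alpha>-1$, so $c$ is well defined. With the identity established, the lemma follows immediately from the fundamental theorem of calculus.
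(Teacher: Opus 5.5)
Your proposal is correct and follows the same route as the paper. The paper applies the weighted differentiation identity $\frac{d}{dx}\big[(1-x)^{\alpha+1}(1+x)^{\beta+1}P_{n-1}^{(\alpha+1,\beta+1)}(x)\big]=-2nP_n^{(\alpha,\beta)}(x)\omega_{\alpha,\beta}(x)$, integrates it over $[a_i,b_i]$ and divides by $-2n$; the only difference is that it cites this identity, while you also verify it correctly (orthogonality of $q$ to $\Pi_{n-1}$ by integration by parts, then $c=-2n$ from evaluation at $x=1$ under the normalization $P_n^{(\alpha,\beta)}(1)=(\alpha+1)_n/n!$).
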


\begin{proof}
The weighted differentiation identity for Jacobi polynomials (see~\cite{Guo:2009:GJP}) gives
\[
\frac{d}{dx}
\left[
(1-x)^{\alpha+1}(1+x)^{\beta+1}
P_{n-1}^{(\alpha+1,\beta+1)}(x)
\right]
=
-2nP_n^{(\alpha,\beta)}(x)\omega_{\alpha,\beta}(x).
\]
By integrating this identity over $\left[a_i,b_i\right]$ and by dividing by $-2n$, we infer \eqref{eq:JacobiCellMomentPrimitive}.
\end{proof}

\subsection{An alternative basis for shifted Jacobi weights}
\label{subsec:telescopicOneSidedJacobiShift}

We now prove a general correction result for Jacobi polynomials. With the
notation introduced above, assume that
\[
d=2p+\gamma_++\gamma_-\ge 1,
\]
and define the space
\begin{equation*}
X_{N,d}^{(\alpha,\beta)}
=
\operatorname{span}
\left\{
P_N^{(\alpha,\beta)},\ldots,
P_{N+d-1}^{(\alpha,\beta)}
\right\}.
\end{equation*}
The next lemma shows that, for each Jacobi polynomial of degree less than $N$, there is a unique polynomial in $X_{N,d}^{(\alpha,\beta)}$
such that subtracting the given polynomial from it yields a multiple
of~\eqref{eq:chi}. The resulting family of quotient polynomials is then shown to form a basis of $\Pi_{N-1}$.

\begin{lemma}
\label{lemmaHighDegreeCorrectionPrinciple}
For any $k=0,\ldots,N-1$, there exists a unique polynomial
\[
\widetilde C_k\in X_{N,d}^{(\alpha,\beta)}
\]
such that
\begin{eqnarray}
\left(
\widetilde C_k-P_k^{(\alpha,\beta)}
\right)^{(r)}(0)
&=&
0,
\quad
r=0,\ldots,2p-1,
\notag\\
\left(
\widetilde C_k-P_k^{(\alpha,\beta)}
\right)^{(r)}(1)
&=&
0,
\quad
r=0,\ldots,\gamma_+-1,
\label{eq:highDegreeCorrectionConditions}\\
\left(
\widetilde C_k-P_k^{(\alpha,\beta)}
\right)^{(r)}(-1)
&=&
0,
\quad
r=0,\ldots,\gamma_--1.
\notag
\end{eqnarray}
The quotient
\begin{equation}
\label{eq:highDegreeCorrectedQuotient}
\mathcal{\widetilde Q}_k(x)
=
\frac{
\widetilde C_k(x)-P_k^{(\alpha,\beta)}(x)
}{
\chi_{p,\gamma_+,\gamma_-}(x)
},
\quad
k=0,\ldots,N-1,
\end{equation}
is therefore a polynomial belonging to $\Pi_{N-1}$.
Moreover, the families
\[
\mathcal B_1
=
\left\{
\mathcal{\widetilde Q}_0,\ldots,
\mathcal{\widetilde Q}_{N-1}
\right\}, \quad
\mathcal B_2
=
\left\{
1,\mathcal{\widetilde Q}_1,\ldots,
\mathcal{\widetilde Q}_{N-1}
\right\}
\]
are bases of $\Pi_{N-1}$.
\end{lemma}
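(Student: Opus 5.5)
The plan follows the proof of Lemma~\ref{lemmaGeneralizedJacobiPolynomialBasis}: a uniqueness argument that rests on Jacobi orthogonality and the positivity of the modified weight. This is followed by degree counting and a triangular-structure argument for the two bases.

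\emph{Existence and uniqueness of $\widetilde C_k$.} Write $\widetilde C_k=\sum_{j=0}^{d-1}b_jP_{N+j}^{(\alpha,\beta)}$. The conditions \eqref{eq:highDegreeCorrectionConditions} form a square $d\times d$ linear system in $b_0,\ldots,b_{d-1}$, with right-hand side given by the corresponding derivatives of $P_k^{(\alpha,\beta)}$ at $0,1,-1$. It suffices to show that the homogeneous system has only the trivial solution. Suppose $\mathcal R\in X_{N,d}^{(\alpha,\beta)}$ has $\mathcal R^{(r)}(0)=0$ for $r<2p$, $\mathcal R^{(r)}(1)=0$ for $r<\gamma_+$, and $\mathcal R^{(r)}(-1)=0$ for $r<\gamma_-$. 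Then $\mathcal R=\chi_{p,\gamma_+,\gamma_-}\mathcal S$ with $\deg\mathcal S\le N-1$. Since $\mathcal R$ is a combination of Jacobi polynomials of degree at least $N$, orthogonality gives $\int_{-1}^1\mathcal R\,\mathcal S\,\omega_{\alpha,\beta}\,dx=0$. But this integral equals $\int_{-1}^1\mathcal S^2\omega_{\alpha,\beta}^{(p,\gamma_+,\gamma_-)}\,dx$. The modified weight is positive a.e., so $\mathcal S=0$, hence $\mathcal R=0$, and linear independence gives $b=0$. This is exactly the argument of Lemma~\ref{lemmaGeneralizedJacobiPolynomialBasis} with $k$ replaced by $N$.

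\emph{Quotient is a polynomial in $\Pi_{N-1}$.} The polynomial $\widetilde C_k-P_k^{(\alpha,\beta)}$ has zeros of multiplicities at least $2p$, $\gamma_+$, $\gamma_-$ at $0$, $1$, $-1$, so $\chi_{p,\gamma_+,\gamma_-}$ divides it. Its degree is at most $N+d-1$, so $\deg\widetilde{\mathcal Q}_k\le N-1$.

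\emph{Basis property.} This is the step that needs care, since the degrees of the $\widetilde{\mathcal Q}_k$ are not visibly $k$. The plan is to show that $\mathcal B_1$ is linearly independent. Suppose $\sum_k c_k\widetilde{\mathcal Q}_k=0$. Multiplying by $\chi$ gives $\sum_k c_k\widetilde C_k=\sum_k c_kP_k^{(\alpha,\beta)}$. The left side lies in $X_{N,d}^{(\alpha,\beta)}$, whose elements expand only in $P_n^{(\alpha,\beta)}$ with $n\ge N$. The right side lies in $\Pi_{N-1}$. Independence of the Jacobi family forces both sides to vanish, so $\sum_kc_kP_k^{(\alpha,\beta)}=0$ and all $c_k=0$. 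Since $\mathcal B_1$ consists of $N$ independent elements of $\Pi_{N-1}$, it is a basis.

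For $\mathcal B_2$ it suffices, by the exchange lemma, to show that in the expansion $1=\sum_k c_k\widetilde{\mathcal Q}_k$ the coefficient $c_0$ is nonzero. Multiplying by $\chi$ gives $\chi=\sum_kc_k\widetilde C_k-\sum_kc_kP_k^{(\alpha,\beta)}$. Projecting onto $\Pi_{N-1}$ in the Jacobi expansion, the $P_0^{(\alpha,\beta)}$ coefficient of $\chi$ equals $-c_0$, up to the contribution from $\widetilde C_k$, which has none below degree $N$. That coefficient of $\chi$ is proportional to $\int_{-1}^1\chi\,\omega_{\alpha,\beta}\,dx$. This integral is positive because $\chi\omega_{\alpha,\beta}=\omega_{\alpha,\beta}^{(p,\gamma_+,\gamma_-)}$ is positive a.e. Hence $c_0\neq0$.

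One caveat: if $\deg\chi=d\ge N$, the $P_0$ projection must be taken in the full Jacobi expansion of $\chi$. The argument still works, since only the degree-$0$ coefficient is used and $\widetilde C_k$ contributes nothing there; this is the step I expect to need the most care.
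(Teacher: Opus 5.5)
Your proof is correct and follows the paper's argument essentially step for step. Existence and uniqueness come from the Lemma~\ref{lemmaGeneralizedJacobiPolynomialBasis} argument with $k=N$, then divisibility and the degree count, and independence of $\mathcal B_1$ from $X_{N,d}^{(\alpha,\beta)}\cap\Pi_{N-1}=\{0\}$. For $\mathcal B_2$ the paper proves linear independence directly by integrating $a_0\chi_{p,\gamma_+,\gamma_-}+\sum_{k\ge1}a_k\bigl(\widetilde C_k-P_k^{(\alpha,\beta)}\bigr)=0$ against $\omega_{\alpha,\beta}$, whereas you expand $1$ in $\mathcal B_1$ and apply the exchange lemma; the key computation is the same (orthogonality leaves only $\int_{-1}^1\chi_{p,\gamma_+,\gamma_-}\omega_{\alpha,\beta}\,dx>0$), and phrasing it as an integral rather than a Jacobi coefficient makes your caveat about $d\ge N$ unnecessary.
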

\begin{proof}
By definition, any polynomial in
$X_{N,d}^{(\alpha,\beta)}$ can be written uniquely as
\[
C(x)
=
\sum_{j=0}^{d-1}
c_jP_{N+j}^{(\alpha,\beta)}(x).
\]
Substituting this expression into
\eqref{eq:highDegreeCorrectionConditions} gives a linear system for the
$d$ coefficients
\[
c_0,\ldots,c_{d-1}.
\]
Since the system consists of $d$ equations in $d$ unknowns, it is sufficient
to prove that the corresponding homogeneous system has only the trivial
solution. Let
\[
C\in X_{N,d}^{(\alpha,\beta)}
\]
satisfy
\begin{equation*}
\begin{cases}
C^{(r)}(0)=0, & \quad r=0,\ldots,2p-1,\\
C^{(r)}(1)=0, & \quad r=0,\ldots,\gamma_+-1,\\
C^{(r)}(-1)=0, & \quad r=0,\ldots,\gamma_--1.
\end{cases}
\end{equation*}
The argument used in the proof of
Lemma~\ref{lemmaGeneralizedJacobiPolynomialBasis}, applied with $k=N$,
shows that $C=0$. Therefore, the homogeneous system has only the trivial
solution, and the polynomial $\widetilde C_k$ exists and is uniquely
determined. The conditions~\eqref{eq:highDegreeCorrectionConditions} show that $\chi_{p,\gamma_+,\gamma_-}$ divides
\[
\widetilde C_k-P_k^{(\alpha,\beta)}.
\]
Hence, the quotient in~\eqref{eq:highDegreeCorrectedQuotient} is a
 polynomial. Moreover,
\[
\deg\left(
\widetilde C_k-P_k^{(\alpha,\beta)}
\right)
\leq
N+d-1,
\]
and therefore
\[
\deg\left(\mathcal{\widetilde Q}_k\right)
\leq
N-1.
\]
Thus,
\[
\mathcal{\widetilde Q}_k\in\Pi_{N-1},
\quad
k=0,\ldots,N-1.
\]
We now prove that
\[
\mathcal{\widetilde Q}_0,\ldots,\mathcal{\widetilde Q}_{N-1}
\]
are linearly independent. Assume that
\[
\sum_{k=0}^{N-1}a_k\mathcal{\widetilde Q}_k(x)=0.
\]
Multiplying by $\chi_{p,\gamma_+,\gamma_-}(x)$ and using
\eqref{eq:highDegreeCorrectedQuotient}, we obtain
\begin{equation}
\label{eq:highDegreeQuotientLinearRelation}
\sum_{k=0}^{N-1}a_k\widetilde C_k(x)
=
\sum_{k=0}^{N-1}
a_kP_k^{(\alpha,\beta)}(x).
\end{equation}
The left hand side belongs to $X_{N,d}^{(\alpha,\beta)}$, whereas the right hand side belongs to $\Pi_{N-1}$. Since
\[
X_{N,d}^{(\alpha,\beta)}
\cap
\Pi_{N-1}
=
\{0\},
\]
both sides of~\eqref{eq:highDegreeQuotientLinearRelation} vanish.
In particular,
\[
\sum_{k=0}^{N-1}
a_kP_k^{(\alpha,\beta)}(x)
=
0.
\]
The linear independence of the Jacobi polynomials then gives
\[
a_0=\cdots=a_{N-1}=0.
\]
Therefore,
$\mathcal{\widetilde Q}_0,\ldots,\mathcal{\widetilde Q}_{N-1}$
are linearly independent. Since they are $N$ polynomials in  $\Pi_{N-1}$, the family $\mathcal B_1$ is a basis
of $\Pi_{N-1}$.

Finally, we prove that $\mathcal B_2$ is a basis of $\Pi_{N-1}$. Assume that
\begin{equation}
\label{eq:constantHighDegreeQuotientRelation}
a_0
+
\sum_{k=1}^{N-1}
a_k\mathcal{\widetilde Q}_k(x)
=
0.
\end{equation}
Multiplying by $\chi_{p,\gamma_+,\gamma_-}(x)$ gives
\[
a_0\chi_{p,\gamma_+,\gamma_-}(x)
+
\sum_{k=1}^{N-1}
a_k\widetilde C_k(x)
-
\sum_{k=1}^{N-1}
a_kP_k^{(\alpha,\beta)}(x)
=
0.
\]
Integrating this identity over $[-1,1]$ with respect to
$\omega_{\alpha,\beta}$ and using Jacobi orthogonality, we obtain
\[
a_0
\int_{-1}^{1}
\chi_{p,\gamma_+,\gamma_-}(x)
\omega_{\alpha,\beta}(x)dx
=
0.
\]
Since
\[
\int_{-1}^{1}
\chi_{p,\gamma_+,\gamma_-}(x)
\omega_{\alpha,\beta}(x)dx
=
\int_{-1}^{1}
\omega_{\alpha,\beta}^{(p,\gamma_+,\gamma_-)}(x)dx
>
0,
\]
we obtain $a_0=0$. Hence,
equation~\eqref{eq:constantHighDegreeQuotientRelation} becomes
\[
\sum_{k=1}^{N-1}
a_k\mathcal{\widetilde Q}_k(x)
=
0.
\]
Since these polynomials belong to the linearly independent family
$\mathcal B_1$, it follows that
\[
a_1=\cdots=a_{N-1}=0.
\]
Hence, $\mathcal B_2$ also forms a basis of $\Pi_{N-1}$.
\end{proof}

We now apply Lemma~\ref{lemmaHighDegreeCorrectionPrinciple} when the modifying polynomial consists of a single endpoint factor. Let $N\geq2$ and fix $\zeta\in\{-1,1\}$. In the notation
of Lemma~\ref{lemmaHighDegreeCorrectionPrinciple}, we set
\[
\left(p,\gamma_+,\gamma_-\right)
=
\begin{cases}
(0,1,0), & \zeta=1,\\
(0,0,1), & \zeta=-1.
\end{cases}
\]
It follows that
\[
\chi_{p,\gamma_+,\gamma_-}(x)
=
1-\zeta x,
\quad
d=1,
\]
and hence
\[
X_{N,1}^{(\alpha,\beta)}
=
\operatorname{span}
\left\{
P_N^{(\alpha,\beta)}
\right\}.
\]
The corresponding shifted Jacobi weight is
\begin{equation}
\label{eq:oneStepShiftedJacobiWeight}
\omega_{\alpha,\beta}^{(\zeta)}(x)
=
(1-\zeta x)\omega_{\alpha,\beta}(x),
\end{equation}
that is
\[
\omega_{\alpha,\beta}^{(\zeta)}(x)
=
\begin{cases}
\omega_{\alpha+1,\beta}(x), & \zeta=1,\\
\omega_{\alpha,\beta+1}(x), & \zeta=-1.
\end{cases}
\]
In this case, the conditions~\eqref{eq:highDegreeCorrectionConditions} reduce to the single condition
\[
\widetilde C_k(\zeta)
=
P_k^{(\alpha,\beta)}(\zeta).
\]
The endpoint values of the Jacobi polynomials are
\[
P_k^{(\alpha,\beta)}(1)
=
\frac{(\alpha+1)_k}{k!},
\quad
P_k^{(\alpha,\beta)}(-1)
=
(-1)^k\frac{(\beta+1)_k}{k!}.
\]
Since $\alpha,\beta>-1$, we have
\[
P_k^{(\alpha,\beta)}(\zeta)\neq0,
\quad
k\geq0.
\]
We may therefore introduce the normalized Jacobi polynomials
\begin{equation}
\label{eq:endpointNormalizedJacobiPolynomials}
R_k^{(\zeta)}(x)
=
\frac{
P_k^{(\alpha,\beta)}(x)
}{
P_k^{(\alpha,\beta)}(\zeta)
},
\quad
k=0,\ldots,N.
\end{equation}
By construction,
\[
R_k^{(\zeta)}(\zeta)=1,
\quad
\deg\left(R_k^{(\zeta)}\right)=k.
\]
Since $X_{N,1}^{(\alpha,\beta)}$ is generated by
$P_N^{(\alpha,\beta)}$, the correction polynomial is given explicitly by
\[
\widetilde C_k(x)
=
\frac{
P_k^{(\alpha,\beta)}(\zeta)
}{
P_N^{(\alpha,\beta)}(\zeta)
}
P_N^{(\alpha,\beta)}(x)
=
P_k^{(\alpha,\beta)}(\zeta)
R_N^{(\zeta)}(x).
\]
Thus, for $k=0,\ldots,N-1$, the quotient polynomial defined in
Lemma~\ref{lemmaHighDegreeCorrectionPrinciple} satisfies
\begin{equation*}
\mathcal{\widetilde Q}_k(x)=
\frac{
\widetilde C_k(x)-P_k^{(\alpha,\beta)}(x)
}{
1-\zeta x
}
=
P_k^{(\alpha,\beta)}(\zeta)
\frac{
R_N^{(\zeta)}(x)-R_k^{(\zeta)}(x)
}{
1-\zeta x
}.
\end{equation*}
We now consider the basis obtained from $\mathcal B_2$ by rescaling its
nonconstant elements. Specifically, we set
\[
Q_0^{(\zeta)}(x)=1,
\]
and consider
\begin{equation}
\label{eq:shiftedJacobiDifferenceQuotients}
Q_k^{(\zeta)}(x)
=
\frac{
R_N^{(\zeta)}(x)-R_k^{(\zeta)}(x)
}{
1-\zeta x
}, \quad k=1,\ldots,N-1.
\end{equation}
The identity above gives
\[
Q_k^{(\zeta)}(x)
=
\frac{
\mathcal{\widetilde Q}_k(x)
}{
P_k^{(\alpha,\beta)}(\zeta)
},
\quad
k=1,\ldots,N-1.
\]
Since $P_k^{(\alpha,\beta)}(\zeta)\neq0$ for
$k=1,\ldots,N-1$, the family
\begin{equation*}
    \mathcal{B}=\left\{Q_0^{(\zeta)},\ldots,Q_{N-1}^{(\zeta)}\right\}
\end{equation*}
is obtained from the basis $\mathcal B_2$ by rescaling its nonconstant
elements. Therefore, it forms a basis of $\Pi_{N-1}$. Finally, since
\[
R_N^{(\zeta)}(\zeta)
=
R_k^{(\zeta)}(\zeta)
=
1,
\]
the numerator in~\eqref{eq:shiftedJacobiDifferenceQuotients} vanishes at
$x=\zeta$. Moreover, since $k<N$, it has degree $N$, and hence
\[
\deg\left(Q_k^{(\zeta)}\right)
=
N-1,
\quad
k=1,\ldots,N-1.
\]

Let $M_N^{(\zeta)}\in \mathbb R^{N\times N}$
be the moment matrix defined by
\begin{equation*}
\left[
M_N^{(\zeta)}
\right]_{i,k+1}
=
\int_{s_i}
Q_k^{(\zeta)}(x)
\omega_{\alpha,\beta}^{(\zeta)}(x)dx,
\quad
i=1,\ldots,N,
\quad
k=0,\ldots,N-1.
\end{equation*}
For $k=0$, using $Q_0^{(\zeta)}=1$, we have
\begin{equation}
\label{eq:shiftedJacobiFirstColumn}
\left[
M_N^{(\zeta)}
\right]_{i,1}
=
\int_{s_i}
(1-\zeta x)\omega_{\alpha,\beta}(x)dx,
\quad
i=1,\ldots,N.
\end{equation}
Using the identity
\[
P_1^{(\alpha,\beta)}(x)
=
\frac{1}{2}
\left[
(\alpha+\beta+2)x+\alpha-\beta
\right],
\]
we can write
\[
x
=
\frac{
2P_1^{(\alpha,\beta)}(x)-\alpha+\beta
}{
\alpha+\beta+2
}.
\]
Substituting this expression into~\eqref{eq:shiftedJacobiFirstColumn} and using~\eqref{eq:referenceJacobiCellMoments}, we obtain
\begin{equation*}
\left[
M_N^{(\zeta)}
\right]_{i,1}
=
\left(
1+
\zeta\frac{\alpha-\beta}{\alpha+\beta+2}
\right)
\mathcal J_i(0)
-
\frac{2\zeta}{\alpha+\beta+2}
\mathcal J_i(1).
\end{equation*}
For $k=1,\ldots,N-1$, using~\eqref{eq:oneStepShiftedJacobiWeight}, \eqref{eq:endpointNormalizedJacobiPolynomials} and~\eqref{eq:shiftedJacobiDifferenceQuotients}, we obtain
\begin{equation*}
\left[
M_N^{(\zeta)}
\right]_{i,k+1}
=
\int_{s_i}
\left[
R_N^{(\zeta)}(x)-R_k^{(\zeta)}(x)
\right]
\omega_{\alpha,\beta}(x)dx
=
\frac{
\mathcal J_i(N)
}{
P_N^{(\alpha,\beta)}(\zeta)
}
-
\frac{
\mathcal J_i(k)
}{
P_k^{(\alpha,\beta)}(\zeta)
}.
\end{equation*}
By Lemma~\ref{lemmaJacobiCellMomentPrimitive}, these entries can be
obtained explicitly from the endpoints of the cells, without computing
the integrals directly.

\subsection{A configuration for the fourth kind Chebyshev weight}
\label{subsec:fourthKindChebyshevConfiguration}

We now apply the preceding construction to the case
\[
\alpha=\beta=-\frac{1}{2},
\quad
\zeta=1.
\]
Throughout this subsection, we assume that $N\geq2$.
The original weight is the Chebyshev weight of the first kind, that is
\[
\omega_{-1/2,-1/2}(x)
=
\frac{1}{\sqrt{1-x^2}},
\]
whereas the shifted weight defined in~\eqref{eq:oneStepShiftedJacobiWeight} becomes
\begin{equation*}
\omega_{-1/2,-1/2}^{(1)}(x)
=
(1-x)\omega_{-1/2,-1/2}(x)
=
\left(
\frac{1-x}{1+x}
\right)^{1/2}
=
\omega_{1/2,-1/2}(x).
\end{equation*}
Thus, the shifted weight is the Chebyshev weight of the fourth kind.

Let $T_k$ denote the Chebyshev polynomial of the first kind of degree $k$.
Using the classical Jacobi normalization, we have
\[
P_k^{(-1/2,-1/2)}(x)
=
a_kT_k(x),
\quad
a_k
=
\frac{(1/2)_k}{k!}.
\]
Since
\[
P_k^{(-1/2,-1/2)}(1)=a_k,
\]
the normalized polynomials defined in~\eqref{eq:endpointNormalizedJacobiPolynomials} become
\[
R_k^{(1)}(x)=T_k(x), \quad k=0,\dots,N.
\]
Consequently, the basis introduced in~\eqref{eq:shiftedJacobiDifferenceQuotients} becomes
\begin{equation}
\label{eq:fourthKindPolynomialBasis}
Q_0^{(1)}(x)=1,
\quad
Q_k^{(1)}(x)
=
\frac{
T_N(x)-T_k(x)
}{
1-x
},
\quad
k=1,\ldots,N-1.
\end{equation}
For
\[
x=\cos\theta,
\quad
0<\theta<\pi,
\]
we have
\[
\omega_{1/2,-1/2}(\cos\theta)\sin\theta
=\left(
\frac{1-\cos\theta}{1+\cos\theta}
\right)^{1/2}\sin\theta=
1-\cos\theta.
\]
It follows from~\eqref{eq:fourthKindPolynomialBasis} that
\begin{equation}
\label{eq:fourthKindMomentIdentity}
Q_k^{(1)}(\cos\theta)
\omega_{1/2,-1/2}(\cos\theta)\sin\theta
=
\cos(N\theta)-\cos(k\theta), \quad k=1,\ldots,N-1.
\end{equation}
Let
\[
0<\rho<\frac{\pi}{2N}.
\]
We consider the nodes
\[
\tau_i
=
\frac{(2i-1)\pi}{2N},
\quad
i=1,\ldots,N,
\]
and the corresponding cells
\[
s_i
=
\left[
\cos\left(\tau_i+\rho\right),
\cos\left(\tau_i-\rho\right)
\right],
\quad
i=1,\ldots,N.
\]
Let $M_N\in\mathbb R^{N\times N}$ be the unnormalized moment matrix
defined by
\[
\left[M_N\right]_{i,k+1}
=
\int_{s_i}
Q_k^{(1)}(x)\omega_{1/2,-1/2}(x)dx, \quad i=1,\ldots,N,
\,
k=0,\ldots,N-1.
\]
We denote its columns by
\[
\boldsymbol c_k
=
\left(
\left[M_N\right]_{1,k+1},
\ldots,
\left[M_N\right]_{N,k+1}
\right)^{\top},
\quad
k=0,\ldots,N-1.
\]

In the following lemma, we express the columns of $M_N$ in terms of vectors obtained by sampling cosine functions at the points $\tau_1,\ldots,\tau_N$.

\begin{lemma}
\label{lemmaFourthKindColumnReduction}
Let
\[
\boldsymbol u_0
=
(1,\ldots,1)^{\top}, \quad
\boldsymbol u_k
=
\left(
\cos\left(k\tau_1\right),
\ldots,
\cos\left(k\tau_N\right)
\right)^{\top},
\quad
k=1,\ldots,N-1.
\]
Then
\begin{equation}
\label{eq:fourthKindConstantColumn}
\boldsymbol c_0
=
2\rho\boldsymbol u_0
-
2\sin(\rho)\boldsymbol u_1,
\end{equation}
whereas
\begin{equation}
\label{eq:fourthKindNonconstantColumns}
\boldsymbol c_k
=
-\frac{2\sin(k\rho)}{k}\boldsymbol u_k,
\quad
k=1,\ldots,N-1.
\end{equation}
\end{lemma}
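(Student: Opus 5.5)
The plan is to compute each entry of $M_N$ by the substitution $x=\cos\theta$, exactly as in the proof of Theorem~\ref{thmChebFirstKindDiagonalGram}.

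\textbf{Setup.} Since $x=\cos\theta$ is decreasing on $(0,\pi)$, the cell $s_i$ corresponds to $\theta\in[\tau_i-\rho,\tau_i+\rho]\subset(0,\pi)$. This inclusion holds because $0<\rho<\pi/(2N)$, so $\tau_1-\rho>0$ and $\tau_N+\rho<\pi$. With $dx=-\sin\theta\,d\theta$, every entry becomes
\[
\left[M_N\right]_{i,k+1}=\int_{\tau_i-\rho}^{\tau_i+\rho}Q_k^{(1)}(\cos\theta)\,\omega_{1/2,-1/2}(\cos\theta)\sin\theta\,d\theta .
\]
The earlier identity $\omega_{1/2,-1/2}(\cos\theta)\sin\theta=1-\cos\theta$ then handles the weight.

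\textbf{Constant column.} For $k=0$ we have $Q_0^{(1)}=1$, so the integrand is $1-\cos\theta$. Integrating gives
\[
2\rho-\bigl[\sin(\tau_i+\rho)-\sin(\tau_i-\rho)\bigr].
\]
The sum-to-product formula $\sin(A+B)-\sin(A-B)=2\cos A\sin B$ turns this into $2\rho-2\sin(\rho)\cos\tau_i$. Reading this componentwise over $i$ gives~\eqref{eq:fourthKindConstantColumn}.

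\textbf{Nonconstant columns.} For $1\le k\le N-1$, identity~\eqref{eq:fourthKindMomentIdentity} reduces the integrand to $\cos(N\theta)-\cos(k\theta)$. The same sum-to-product formula gives
\[
\int_{\tau_i-\rho}^{\tau_i+\rho}\cos(m\theta)\,d\theta=\frac{2\sin(m\rho)}{m}\cos(m\tau_i),\qquad m\ge1 .
\]
Taking $m=N$, we have $N\tau_i=(2i-1)\pi/2$, so $\cos(N\tau_i)=0$ for every $i$ and the $T_N$ contribution vanishes. Only $-\frac{2\sin(k\rho)}{k}\cos(k\tau_i)$ remains, which is~\eqref{eq:fourthKindNonconstantColumns}.

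The only step requiring care is this cancellation of the degree-$N$ term. It relies on the nodes $\tau_i$ being exactly the zeros of $T_N$, that is $\cos(N\tau_i)=0$. This is the reason $R_N^{(1)}=T_N$ was built into the basis.
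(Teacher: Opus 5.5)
Your proposal is correct and follows the paper's proof essentially step by step. It uses the substitution $x=\cos\theta$ with $\omega_{1/2,-1/2}(\cos\theta)\sin\theta=1-\cos\theta$, integrates directly via the sum-to-product formula, and kills the degree-$N$ contribution because $\cos(N\tau_i)=0$; your explicit check that $[\tau_i-\rho,\tau_i+\rho]\subset(0,\pi)$ is a small detail the paper leaves implicit.
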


\begin{proof}
For the first column, using the change of variables $x=\cos\theta$, we have
\begin{eqnarray*}
\left[\boldsymbol c_0\right]_i
&=&
\int_{s_i}
\omega_{1/2,-1/2}(x)dx=
\int_{\tau_i-\rho}^{\tau_i+\rho}
\omega_{1/2,-1/2}(\cos\theta)\sin\theta\,d\theta
\\
&=&
\int_{\tau_i-\rho}^{\tau_i+\rho}
(1-\cos\theta)d\theta
=
2\rho
-
2\sin(\rho)\cos\left(\tau_i\right),
\quad
i=1,\ldots,N.
\end{eqnarray*}
This proves~\eqref{eq:fourthKindConstantColumn}.

For $k=1,\ldots,N-1$, using
\eqref{eq:fourthKindMomentIdentity}, we obtain
\begin{eqnarray*}
    \left[\boldsymbol c_k\right]_i
&=& \int_{s_i}
Q_k^{(1)}(x)\omega_{1/2,-1/2}(x)dx= \int_{\tau_i-\rho}^{\tau_i+\rho}
Q_k^{(1)}(\cos \theta)\omega_{1/2,-1/2}(\cos \theta) \sin \theta d\theta\\ &=&
\int_{\tau_i-\rho}^{\tau_i+\rho}
\left[
\cos(N\theta)-\cos(k\theta)
\right]d\theta \\
&=&
\frac{2\sin(N\rho)}{N}\cos\left(N\tau_i\right)
-
\frac{2\sin(k\rho)}{k}\cos\left(k\tau_i\right).
\end{eqnarray*}
Since
\[
N\tau_i
=
\frac{(2i-1)\pi}{2},
\]
we have
\[
\cos\left(N\tau_i\right)=0.
\]
Therefore, we get
\[
\left[\boldsymbol c_k\right]_i
=
-\frac{2\sin(k\rho)}{k}\cos\left(k\tau_i\right),
\quad
i=1,\ldots,N,
\quad
k=1,\ldots,N-1,
\]
which proves~\eqref{eq:fourthKindNonconstantColumns}.
\end{proof}

Using~\eqref{cosineSumZero} and~\eqref{cosineDiscreteOrthogonality}, the vectors
\[
\boldsymbol u_0,\ldots,\boldsymbol u_{N-1}
\]
are mutually orthogonal. More precisely, for any $j,k=0,\ldots,N-1$, we have
\[
\boldsymbol u_j^{\top}\boldsymbol u_k
=
\begin{cases}
N, & j=k=0,\\[1mm]
\dfrac{N}{2}, & j=k\in\{1,\ldots,N-1\},\\[2mm]
0, & j\neq k.
\end{cases}
\]
To apply Theorem~\ref{thm:singleColumnCorrection}, we consider
\[
\Xi_N
=
\left\{
\tau_1,\ldots,\tau_N
\right\}
\]
and define
\[
\psi_k(\xi)
=
\cos(k\xi),
\quad
\xi\in\Xi_N,
\quad
k=0,\ldots,N-1.
\]
The corresponding sampling vectors are
\[
\boldsymbol u_k=\left(
\psi_k\left(\tau_1\right),\ldots,\psi_k\left(\tau_N\right)
\right)^{\top},
\quad
k=0,\ldots,N-1.
\]
Thus, in the notation of
\eqref{eq:generalWeightedOrthogonality}, we set
\[
W_N=I_N,
\]
\begin{equation}\label{newnu}
    \nu_0=N,
\quad
\nu_k=\frac{N}{2},
\quad
k=1,\ldots,N-1.
\end{equation}
Moreover, Lemma~\ref{lemmaFourthKindColumnReduction} gives
\[
\boldsymbol c_0
=
\eta_0\boldsymbol u_0+\eta_1\boldsymbol u_1,
\]
where
\begin{equation}\label{neweta}
    \eta_0=2\rho,
\quad
\eta_1=-2\sin(\rho),
\end{equation}
and
\[
\boldsymbol c_k
=
\widehat a_k\boldsymbol u_k,
\quad
k=1,\ldots,N-1,
\]
where
\begin{equation}\label{newak}
    \widehat a_k
=
-\frac{2\sin(k\rho)}{k},
\quad
k=1,\ldots,N-1.
\end{equation}
Therefore, the index appearing in
Theorem~\ref{thm:singleColumnCorrection} is
\[
m=1.
\]
This choice is admissible since $N\geq2$. Furthermore,
\[
\eta_0=2\rho>0.
\]
For $k=1,\ldots,N-1$, we also have
\[
0<k\rho
<
\frac{k\pi}{2N}
\leq
\frac{(N-1)\pi}{2N}
<
\frac{\pi}{2},
\]
and hence
\[
\widehat a_k\neq0,
\quad
k=1,\ldots,N-1.
\]
Thus, the column representations required in
Theorem~\ref{thm:singleColumnCorrection} hold with $m=1$.

Using~\eqref{eq:fourthKindConstantColumn},
\eqref{eq:fourthKindNonconstantColumns}, and the orthogonality of
$\boldsymbol u_0$ and $\boldsymbol u_1$, we obtain
\[
\boldsymbol c_0^{\top}\boldsymbol c_1
=
2N\sin^2(\rho)
\]
and
\[
\boldsymbol c_0^{\top}\boldsymbol c_0
=
2N
\left[
2\rho^2+\sin^2(\rho)
\right].
\]
Consequently, the correction parameter defined in
\eqref{eq:generalLambda} is
\begin{equation*}
\lambda_1
=
\frac{
\boldsymbol c_0^{\top}\boldsymbol c_1
}{
\boldsymbol c_0^{\top}\boldsymbol c_0
}
=
\frac{
\sin^2(\rho)
}{
2\rho^2+\sin^2(\rho)
}.
\end{equation*}
We now define
\[
\widehat Q_0^{(1)}(x)
=
Q_0^{(1)}(x)
=
1,
\]
\[
\widehat Q_1^{(1)}(x)
=
Q_1^{(1)}(x)-\lambda_1,
\]
and
\[
\widehat Q_k^{(1)}(x)
=
Q_k^{(1)}(x),
\quad
k=2,\ldots,N-1.
\]
Since $\widehat Q_1^{(1)}$ is obtained by subtracting a constant
multiple of $Q_0^{(1)}$ from $Q_1^{(1)}$, the family
\[
\left\{
\widehat Q_0^{(1)},\ldots,
\widehat Q_{N-1}^{(1)}
\right\}
\]
is again a basis of $\Pi_{N-1}$. Let now $\hat M_N\in\mathbb R^{N\times N}$ be the corresponding moment
matrix, namely
\[
\left[\hat M_N\right]_{i,k+1}
=
\int_{s_i}
\widehat Q_k^{(1)}(x)
\omega_{1/2,-1/2}(x)dx, \quad i=1,\ldots,N,
\,
k=0,\ldots,N-1.
\]
We can now apply
Theorem~\ref{thm:singleColumnCorrection} to the matrix $\hat M_N$.

\begin{theorem}
\label{thmFourthKindDiagonalGram}
Let $N\geq2$. For any
\[
0<\rho<\frac{\pi}{2N},
\]
the matrix $\hat M_N$ satisfies
\[
\hat M_N^{\top}\hat M_N
=
\operatorname{diag}
\left(
d_0,\ldots,d_{N-1}
\right),
\]
where
\[
d_0
=
2N
\left[
2\rho^2+\sin^2(\rho)
\right],
\quad
d_1
=
\frac{
4N\rho^2\sin^2(\rho)
}{
2\rho^2+\sin^2(\rho)
},
\]
and
\[
d_k
=
\frac{N}{2}
\left[
\frac{2\sin(k\rho)}{k}
\right]^2,
\quad
k=2,\ldots,N-1.
\]
In particular, $\hat M_N$ is nonsingular.
\end{theorem}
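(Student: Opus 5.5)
The plan is to obtain the statement as a direct specialization of Theorem~\ref{thm:singleColumnCorrection}, with $W_N=I_N$ and $m=1$. All hypotheses have already been checked in the discussion preceding the statement. The sampling vectors $\boldsymbol u_0,\ldots,\boldsymbol u_{N-1}$ are mutually orthogonal with $\nu_0=N$ and $\nu_k=N/2$, as in~\eqref{newnu}. Lemma~\ref{lemmaFourthKindColumnReduction} gives the representation~\eqref{eq:generalConstantColumnReduction} with $\eta_0=2\rho\neq0$ and $\eta_1=-2\sin(\rho)$. It also gives~\eqref{eq:generalNonconstantColumnReduction} with $\widehat a_k=-2\sin(k\rho)/k$, and these are nonzero because $0<k\rho<\pi/2$. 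Finally, the modified basis $\widehat Q_k^{(1)}$ is exactly the one prescribed in the theorem: only $Q_1^{(1)}$ is shifted, by the constant $\lambda_1$ of~\eqref{eq:generalLambda}. Its columns are therefore $\boldsymbol c_0$, $\boldsymbol c_1-\lambda_1\boldsymbol c_0$, and $\boldsymbol c_k$ for $k\ge2$, which is precisely $\hat M_N$ in the sense of~\eqref{eq:modifiedMomentMatrix}.

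First I will invoke Theorem~\ref{thm:singleColumnCorrection}, which gives that $\hat M_N^{\top}\hat M_N$ is diagonal and that $\hat M_N$ is nonsingular. Next I will evaluate the diagonal entries by substituting~\eqref{newnu}, \eqref{neweta} and~\eqref{newak}.

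For $d_0$:
\[
d_0=\eta_0^2\nu_0+\eta_1^2\nu_1=4\rho^2N+4\sin^2(\rho)\tfrac N2=2N\left[2\rho^2+\sin^2(\rho)\right],
\]
which agrees with the value of $\boldsymbol c_0^{\top}\boldsymbol c_0$ computed earlier.

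For $k\ge2$: $d_k=\widehat a_k^2\nu_k=\frac N2\left[\frac{2\sin(k\rho)}{k}\right]^2$, since the sign of $\widehat a_k$ disappears on squaring.

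For $d_1$, the formula of the theorem gives $\widehat a_1^2\nu_0\nu_1\eta_0^2/(\eta_0^2\nu_0+\eta_1^2\nu_1)$. The numerator is
\[
4\sin^2(\rho)\cdot N\cdot\tfrac N2\cdot4\rho^2=8N^2\rho^2\sin^2(\rho),
\]
and the denominator is $2N[2\rho^2+\sin^2(\rho)]$. Their quotient is $4N\rho^2\sin^2(\rho)/(2\rho^2+\sin^2(\rho))$, as claimed.

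As a consistency check, I will verify that the value of $\lambda_1$ from~\eqref{newlambda}, namely $\eta_1\widehat a_1\nu_1/(\eta_0^2\nu_0+\eta_1^2\nu_1)=\sin^2(\rho)/(2\rho^2+\sin^2(\rho))$, coincides with the $\lambda_1$ used to define $\widehat Q_1^{(1)}$. Here $\eta_1\widehat a_1=4\sin^2(\rho)$, the factor $\nu_1=N/2$ cancels against the $2N$ in the denominator, and the two expressions agree.

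There is no real obstacle. The only points needing care are the sign bookkeeping: $\eta_1$ and $\widehat a_1$ are both negative, so their product is positive, matching $\boldsymbol c_0^{\top}\boldsymbol c_1=2N\sin^2(\rho)>0$. One must also confirm that the restriction $N\ge2$ makes $m=1$ admissible. Positivity of every $d_k$ then follows from $\rho>0$ and $\sin(k\rho)>0$, which again yields nonsingularity.
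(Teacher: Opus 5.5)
Your proposal is correct and matches the paper's proof: both specialize Theorem~\ref{thm:singleColumnCorrection} with $W_N=I_N$ and $m=1$, using the parameters $\nu_k$, $\eta_0$, $\eta_1$, $\widehat a_k$ from \eqref{newnu}--\eqref{newak}, and substitute them into the general formulas for $d_0$, $d_1$, $d_k$. Your arithmetic for all three diagonal entries and your consistency check on $\lambda_1$ are right.
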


\begin{proof}
The result follows from
Theorem~\ref{thm:singleColumnCorrection}, applied with the parameters
identified above. In particular, the diagonal entries are
\[
d_0
=
\eta_0^2\nu_0+\eta_1^2\nu_1,
\quad
d_1
=
\frac{
\widehat a_1^2\nu_0\nu_1\eta_0^2
}{
\eta_0^2\nu_0+\eta_1^2\nu_1
},
\]
and
\[
d_k
=
\widehat a_k^2\nu_k,
\quad
k=2,\ldots,N-1.
\]
The stated formulas then follow by substituting the definitions of
$\eta_0$ and $\eta_1$ from~\eqref{neweta},
$\widehat a_k$ from~\eqref{newak}, and
$\nu_k$ from~\eqref{newnu}. Since
\[
0<k\rho<\frac{\pi}{2},
\quad
k=1,\ldots,N-1,
\]
all the diagonal entries are strictly positive. Therefore,
$\hat M_N^{\top}\hat M_N$ is positive definite, and hence
$\hat M_N$ is nonsingular.
\end{proof}

\section{Conclusions and Future Work}\label{sec6}

In the present work, we studied univariate polynomial weighted histopolation on
families of intervals in $[-1,1]$, with particular attention to unisolvence
and to configurations for which the associated Gram matrix is exactly
diagonal. For interval families whose endpoints belong to a fixed grid, we
proved that unisolvence is equivalent to the connectedness of the associated
endpoint graph. When this condition holds, the graph has $N+1$ vertices and
$N$ edges and is therefore a tree. The underlying structure gives an explicit
description of the inverse of the interval matrix in terms of the unique paths
joining consecutive grid points. In particular, the infinity norm of the
inverse is determined by the lengths of these paths. The same representation also provides a characterization of the spectral condition number of the interval matrix and a lower bound in terms of the maximum interval width and
the maximum path length in the endpoint tree. Furthermore, we considered cells of constant angular length. For the Chebyshev weight
of the first kind, discrete cosine orthogonality gives an exactly diagonal
Gram matrix for the normalized histopolation matrix. This allowed us to derive
explicit formulas for its diagonal entries, singular values, and spectral
condition number. We also proved that, whenever the number of cells is at
least three, requiring this diagonal structure for every admissible angular
half-length characterizes the Jacobi parameters
$\alpha=\beta=-\frac{1}{2}$. A general diagonalization criterion based on discrete weighted orthogonality was established, together with a correction result for a single nonconstant column. These results recover the first kind Chebyshev construction and provide
further configurations, including the constant weight case based on discrete
sine orthogonality. For interval families with a connected endpoint graph, we
also constructed a polynomial basis for which the corresponding weighted Gram
matrix is diagonal. Finally, for generalized Jacobi weights, we showed that the corresponding cell moments can be reduced to finite linear combinations of classical Jacobi cell moments and that, for positive degrees, the latter admit explicit representations in terms of endpoint values.  We also introduced an alternative basis for shifted Jacobi weights. For the Chebyshev weight of the fourth kind, taking into account the correction result, the considered basis leads to an exactly
diagonal Gram matrix.

Several questions remain open. It would be interesting to derive explicit
formulas or recurrence relations for the orthogonal basis associated with the
endpoint graph and to determine more precisely how the topology of the corresponding tree affects the diagonal entries and the spectral condition number of the moment
matrix. Finally, it would be interesting to use the moment reduction formulas for
generalized Jacobi weights to obtain explicit orthogonal bases for interval
families with a connected endpoint graph.

\section*{Declarations}

\textbf{Corresponding author}\\
Federico Nudo, email federico.nudo@unical.it\\

\noindent
\textbf{Conflict of Interest}\\
The authors declare that they have no conflict of interest.\\

\noindent
\textbf{Funding statement}\\
This research was supported by the GNCS-INdAM 2026 projects
\emph{``Metodi polinomiali e kernel per l'approssimazione da dati discreti e integrali con software OS''} and \emph{``Metodi strutturati per il signal processing avanzato''} (CUP\_E53C25002010001).
The work of F. Nudo was funded by the European Union -- NextGenerationEU under the Italian National Recovery and Resilience Plan (PNRR), Mission 4, Component 2, Investment 1.2
\lq\lq Finanziamento di progetti presentati da giovani ricercatori\rq\rq,
pursuant to MUR Decree No.~47/2025.\\

\noindent
\textbf{Author Contributions}\\
Allal Guessab, Federico Nudo and Stefano Serra-Capizzano contributed equally to the conception, development, and writing of this manuscript.
All authors have read and approved the final version of the paper. For this reason, the order of authorship is alphabetical.\\

\noindent
\textbf{Acknowledgements}\\
This research was carried out as part of RITA \textquotedblleft Research ITalian network on Approximation'' and as part of the UMI group \enquote{Teoria dell'Approssimazione e Applicazioni}.\\

\noindent
\textbf{Data Availability}\\
No data were used in this study.

\bibliographystyle{spmpsci}
\bibliography{bibliography}

\end{document}